\newcommand{\eq}{\normalcolor{}}
\newcommand{\coev}{\mbox{coev}}
\newcommand{\ev}{\mbox{ev}}
\newcommand{\rhob}{\bar{\rho}}
\newcommand{\Ab}{\mbox{\rm Ab\,}}
\newcommand{\otk}{{\otimes_{\ku}}}
\newcommand{\Mo}{{\mathcal M}}
\newcommand{\No}{{\mathcal N}}
\newcommand{\moda}{{\mathfrak m}}
\newcommand{\Bimo}{{\mathcal Bimod}}
\newcommand{\uc}{{\mathcal U}}
\newcommand{\ca}{{\mathcal C}}
\newcommand{\ot}{{\otimes}}
\newcommand{\op}{\rm{op}}
\newcommand{\ad}{\rm{ad}}
\newtheorem*{teo*}{Theorem}
\newcommand{\kc}{{\mathcal K}}
\newcommand{\Ac}{{\mathcal A}}
\newcommand{\Sc}{{\mathcal S}}
\newcommand{\Zc}{{\mathcal Z}}
\newcommand{\vc}{{\mathcal V}}
\newcommand\mug{\operatorname{M\ddot ug}}
\newcommand{\Do}{{\mathcal D}}
\newcommand{\Bc}{{\mathcal B}}
\newcommand{\ac}{{\mathcal A}}
\newcommand{\YD}{{\mathcal YD}}
\newcommand{\cop}{\rm{cop}}
\newcommand{\ra}{\rm{ra}}
\newcommand{\ku}{{\Bbbk}}
\newcommand{\Na}{{\mathbb N}}
\newcommand{\uno}{ \mathbf{1}}
\newcommand{\C}{{\mathcal C}}
\newcommand{\id}{\mbox{\rm id\,}}
\newcommand{\Id}{\mbox{\rm Id\,}}
\newcommand{\vect}{\mbox{\rm vect\,}}
\newcommand{\Nat}{\mbox{\rm Nat\,}}
\newcommand{\Rex}{\mbox{\rm Rex\,}}
\newcommand{\Fun}{\operatorname{Fun}}
\newcommand\Rep{\operatorname{Rep}}
\newcommand\Hom{\operatorname{Hom}}
\newcommand\uhom{\underline{\Hom}}
\newcommand{\End}{\operatorname{End}}
\renewcommand{\_}[1]{\mbox{$_{\left( #1 \right)}$}}
\theoremstyle{plain}
\numberwithin{equation}{section}
\newtheorem{teo}{Theorem}[section]
\newtheorem{lema}[teo]{Lemma}
\newtheorem{cor}[teo]{Corollary}
\newtheorem{prop}[teo]{Proposition}
\newtheorem{claim}{Claim}[section]
\theoremstyle{definition}
\newtheorem{defi}[teo]{Definition}
  \newtheorem{exa}[teo]{Example}
\theoremstyle{remark}
\newtheorem{rmk}[teo]{Remark}
\def\pf{\begin{proof}}
\def\epf{\end{proof}}
\theoremstyle{remark}
\subjclass[2010]{18D20, 18D10}
\begin{document}

\title[  Central Hopf monads ]
{  Central  Hopf monads and Braided commutative algebras}
\author[    Bortolussi, Mejía Castaño and Mombelli  ]{ Noelia Bortolussi, Adriana Mejía Castaño and Mart\'in Mombelli
 }

\keywords{tensor category; module category}
\address{Facultad de Matem\'atica, Astronom\'\i a y F\'\i sica
\newline \indent
Universidad Nacional de C\'ordoba
\newline
\indent CIEM -- CONICET
\newline \indent Medina Allende s/n
\newline
\indent (5000) Ciudad Universitaria, C\'ordoba, Argentina}
\email{martin10090@gmail.com, martin.mombelli@unc.edu.ar
\newline \indent\emph{URL:}\/ https://www.famaf.unc.edu.ar/$\sim$mombelli}

\address{Instituto de Matemática Aplicada San Luis (IMASL - CONICET),
\newline \indent
Universidad Nacional de San Luis
\newline \indent
Italia 1556, Ciudad de San Luis (5700), San Luis, Argentina}
\email{bortolussinb@gmail.com, bortolussi@unsl.edu.ar }

\address{Universidad del Norte
\newline \indent Km 5 via Puerto Colombia, Barranquilla, Colombia}
\email{mejiala@uninorte.edu.co}

\begin{abstract} Let $\vc$ be a  braided tensor category  and $\ca$ a  tensor category equipped with a braided tensor functor $G:\vc\to \Zc(\ca)$. For any exact indecomposable $\ca$-module category $\Mo$, we explicitly construct a right adjoint of the action functor $\rho:\Zc^{\vc}(\ca)\to \ca^*_\Mo$ afforded by $\Mo$.  Here $\Zc^{\vc}(\ca)$ is  M\"uger's centralizer of the subcategory $G(\vc)$ inside the center $\Zc(\ca)$, also known as the {\it relative center} \cite{La, LW2}. The construction is parallel to the one presented by Shimizu \cite{Sh2}, but using   the relative  (co)end  \cite{BM} rather than the usual (co)end. This adjunction is monadic, and thus for the Hopf monad $T_\vc:\ca\to \ca$, associated to it, there is a monoidal equivalence  $\ca_{T_\vc}\simeq \Zc^{\vc}(\ca).$ If $\bar{\rho}:\ca^*_\Mo\to  \Zc^{\vc}(\ca)$ is the right adjoint of $\rho,$ then $\bar{\rho}(\Id_\Mo)$ is the braided commutative algebra constructed in \cite{LW3}. As a consequence of our construction of these algebras, in terms of the right adjoint to $\rho$, we can provide a recipe to compute them when $\ca=\Rep(H\# T)$ is the category of finite-dimensional representations of a finite-dimensional Hopf algebra $H\# T$ obtained by bosonization, and choosing an arbitrary $\Rep(H\# T)$-module category $\Mo$. We show an explicit example in the case of Taft algebras.
\end{abstract}

\date{\today}
\maketitle

\tableofcontents

\section*{Introduction}

Braided commutative algebras in braided tensor categories are relevant in both mathematics and theoretical physics due to their ability to generalize classical commutative algebras with a twist that reflects quantum symmetries. In mathematics, they play a crucial role in category theory, representation theory, and topological field theories, providing a framework for studying  algebraic structures and their interactions. In physics, these algebras are essential in quantum field theory, string theory, and the study of topological phases of matter, where they model particles with braid statistics and describe non-trivial exchange behaviors. The reader is refered to \cite{CLR},  \cite{DKR}, \cite{FFRS}, \cite{SW}.

\medbreak

For any algebra $(A,m,u)$ in a tensor category $\ca$, A. Davydov \cite{Dav1} has defined the {\it full center} of $A$ as an object $Z(A)$ in the Drinfeld center $\Zc(\ca)$, together with a morphism $Z(A)\to A$ in $\ca$,  terminal among pairs $((Z,\sigma), z)$, where $(Z,\sigma)\in \Zc(\ca)$, and $z:Z\to A$ is a morphism in $\ca$  such that   the following diagram is commutative
\begin{equation*}
\xymatrix{& Z\ot A
\ar[dl]_{\sigma_{Z,A}}
\ar[dr]^{z\ot \id_A}&\\
A\ot Z \ar[d]^{\id\ot z}&&A\ot A  \ar[d]_{m}\\ A \otimes A \ar[rr]^{m}&&A.}
\end{equation*}
The full center of an algebra turns out to be a commutative algebra in the center $\Zc(\ca)$. Davydov also gave a Morita invariant definition of the full center, making use of the notion of a $\ca$-module category.
An alternative construction  was given by  K. Shimizu \cite{Sh1}, \cite{Sh2}.  
For any finite tensor category $\ca$ and  a $\ca$-module category $\Mo$, with action functor $\rho:\ca\to \End(\Mo)$,
K. Shimizu  gave an explicit form for a right adjoint of $\rho$ as 
$$ \rho^{\ra}:\End(\Mo)\to \ca, $$
$$ \rho^{\ra}(F)=\int_{M\in \Mo} \uhom(M,F(M)).$$ 
Since the functor $\rho$ is a $\ca$-bimodule functor, one can apply the relative center, and consider the monoidal functor $\Zc_\ca(\rho): \Zc(\ca)\to \Zc_\ca(\End(\Mo))\simeq \ca^*_\Mo.$ Shimizu's {\it adjoint algebra} associated with $\Mo$ are then as $A_\Mo=\Zc_\ca(\rho^{\ra})(\Id_\Mo). $ These algebras are (braided) commutative algebras in the center $\Zc(\ca),$ and they coincide with Davydov's full center of $\Mo.$

\medbreak

The algebra $A_\Mo$ is a  {\it Lagrangian algebra} \cite{DMNO} in the Drinfeld center, and it encode all information needed to know the module category $\Mo$. In particular, the correspondence
$A_\Mo  \longleftrightarrow  \Mo$
establishes a bijection
  \begin{equation*}
\left\{
\begin{array}{c}
\mbox{isomorphism classes of commutative}\\
\mbox{algebras in $\Zc(\ca)$  } \\
\mbox{with FPdimension = FPdim($\ca)$ }
\end{array}
\right\}
\begin{array}{c}
{}\\
\xrightarrow{\quad\;  \quad \;}\\
\xleftarrow{\quad\;  \quad\;}\\
{}
\end{array}
\left\{
\begin{array}{c}
\mbox{equivalence classes of exact}\\
\mbox{ indecomposable $\ca$-mod categories}
\end{array}
\right\}.
\end{equation*}
This correspondence is proved in \cite[Section 4.2]{DMNO} in the semisimple case. See also \cite{KR}. Conjecturally, this correspondence is still valid in the non-semisimple case.

\medbreak

In the work \cite{LW3} the authors generalized Davydov's construction in the framework of {\it central tensor categories.} If $(\vc, \sigma)$ is a braided tensor category, a \emph{$\vc$-central tensor category} is a
 tensor category $\ca$ equipped with a faithful braided monoidal functor $G: \vc\to \Zc(\ca)$. The authors define, for any algebra in $\ca$, an associated commutative algebra in the relative center $\Zc^\vc(\ca)$; that is,  M\"uger's centralizer of the subcategory $G(\vc)$ inside  $\Zc(\ca)$. When $\vc=\vect_\ku$ is the category of finite-dimensional vector spaces, with usual braiding, they recover the full center construction.

\medbreak

The purpose of the present paper is to give a construction, {\it à la} Shimizu, of braided commutative algebras constructed in \cite{LW3},  which facilitates the  computation of explicit examples. 

\medbreak
Assume that  $(\vc, \sigma)$ is a braided tensor category and $\ca$ is a $\vc$-central tensor category. If $\Mo$ is a $\ca$-module category, then it is a $\vc$-module category, and the action functor $\ca\to \End(\Mo)$ lands in the category of $\vc$-module functors. See Lemma \ref{rho-in-V-modfunct}. Thus we have a functor $\rho:\ca\to \End_\vc(\Mo)$. We prove
\begin{teo*}[Thm. \ref{adjoint-structure-end}] A right adjoint to the functor $\rho:\ca\to \End_\vc(\Mo)$ is given by $\bar{\rho}:\End_\vc(\Mo)\to \ca$
$$
\bar{\rho}(F)=\oint_{M\in \Mo} \big(\uhom( M,F(M)),\beta \big). \qed
$$
\end{teo*}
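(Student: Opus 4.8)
The plan is to build the adjunction from two pieces that are already available: Shimizu's ordinary adjunction $\rho^{\ra}(F)=\int_{M\in\Mo}\uhom(M,F(M))$ between $\rho:\ca\to\End(\Mo)$ and its right adjoint, and the universal property of the relative end $\oint_{M\in\Mo}$ from \cite{BM}. First I would recall that, since $\ca$ is $\vc$-central via $G:\vc\to\Zc(\ca)$, every $\ca$-module category $\Mo$ is a $\vc$-module category by restriction along $G$, and by Lemma \ref{rho-in-V-modfunct} the action functor factors through $\rho:\ca\to\End_\vc(\Mo)$; the inclusion $\End_\vc(\Mo)\hookrightarrow\End(\Mo)$ is the forgetful functor on the category of $\vc$-module endofunctors. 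The object $\uhom(M,F(M))$ carries, for each $F\in\End_\vc(\Mo)$, a natural half-braiding: the $\vc$-module structure $\beta$ on $F$ together with the universal (co)evaluation morphisms of the internal Hom induces a morphism $X\ot\uhom(M,F(M))\to\uhom(M,F(M))\ot X$ for $X$ in the image of $G$, making $\big(\uhom(M,F(M)),\beta\big)$ an object of the diagram category over which the relative end is taken. This is the content that makes the formula for $\bar\rho$ even type-check, so I would set it up carefully first.

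Next I would establish the key natural isomorphism
\[
\Hom_{\End_\vc(\Mo)}(\rho(X),F)\;\cong\;\Hom_{\ca}\big(X,\bar\rho(F)\big),
\qquad X\in\ca,\ F\in\End_\vc(\Mo).
\]
The left-hand side is, by definition of morphisms of $\vc$-module functors, the equalizer (inside $\Hom_{\End(\Mo)}(\rho(X),F)$) of the two maps expressing compatibility with the $\vc$-module structures. By Shimizu's adjunction, $\Hom_{\End(\Mo)}(\rho(X),F)\cong\Hom_{\ca}\big(X,\int_{M}\uhom(M,F(M))\big)\cong\int_{M}\Hom_\ca(X,\uhom(M,F(M)))$, and under this identification the two maps being equalized correspond exactly to the two maps whose equalizer defines the relative end $\oint_{M}\big(\uhom(M,F(M)),\beta\big)$ — that is, the dinaturality constraints of \cite{BM} that cut the ordinary end down to the relative end. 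Therefore the equalizer on the left matches $\Hom_\ca\big(X,\oint_{M}(\uhom(M,F(M)),\beta)\big)=\Hom_\ca(X,\bar\rho(F))$. Naturality in $X$ is inherited from Shimizu's adjunction; naturality in $F$ requires checking that a $\vc$-module natural transformation $F\to F'$ induces the expected map on relative ends, which follows from functoriality of $\uhom(M,-)$ and of the relative-end construction.

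I would then note that $\bar\rho$ is well-defined as a functor: existence of the relative end $\oint_{M\in\Mo}\big(\uhom(M,F(M)),\beta\big)$ in $\ca$ is guaranteed because $\ca$ is a finite tensor category and $\Mo$ is exact (so the ordinary end $\int_M\uhom(M,F(M))$ exists by Shimizu, and the relative end is a subobject of it cut out by finitely many — or at least a coherent system of — equalizers, which exist in the abelian category $\ca$; this is exactly the setting of \cite{BM}). The main obstacle I anticipate is the bookkeeping in the middle step: verifying that, after transporting along Shimizu's isomorphism, the compatibility conditions defining a morphism \emph{of $\vc$-module functors} $\rho(X)\to F$ become precisely the defining equations of the relative end and not some a priori different pair of maps. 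This amounts to a diagram chase relating the half-braiding $\beta$ on $\uhom(M,F(M))$, the module structure of $\rho(X)=X\ot(-)$ coming from $G(X)\in\Zc(\ca)$, and the universal dinatural transformation of the end; it is conceptually straightforward but is where all the coherence data must be matched up. Once that identification is in hand, the adjunction, and hence the theorem, follows.
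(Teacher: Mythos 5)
Your proposal is correct in substance but takes a different organizational route from the paper. The paper's proof never leaves the world of relative ends: after noting that $\rho$ has an abstract right adjoint (Lemma \ref{rho-in-V-modfunct}), it applies Proposition \ref{results-on-coend}(2) to identify $\Nat_\vc(\rho(X),F)$ directly with the relative end $\oint_M\big(\Hom_\Mo(X\triangleright M,F(M)),\gamma\big)$, transports the prebalancing through the internal-Hom isomorphism $\phi$ via Prop.\ \ref{results-on-coend}(1), and then uses the Fubini-type Prop.\ \ref{results-on-coend}(3) to pull $\Hom_\ca(X,-)$ out of the relative end. You instead detour through Shimizu's \emph{ordinary} end: start from $\Nat(\rho(X),F)\cong\Hom_\ca(X,\int_M\uhom(M,F(M)))$, realize $\Nat_\vc(\rho(X),F)$ as an equalizer inside $\Nat(\rho(X),F)$ and the relative end $\oint_M(\uhom(M,F(M)),\beta)$ as a subobject of the ordinary end $\int_M\uhom(M,F(M))$, and then show the two equalizer conditions correspond under Shimizu's isomorphism. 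This is a legitimate alternative and has the pedagogical advantage of exhibiting the relative end explicitly as a subobject of the ordinary end; but be aware that the step you flag as "a conceptually straightforward diagram chase" is precisely what Prop.\ \ref{results-on-coend}(2) already packages (the prebalancing $\gamma$ on $\Hom_\Mo(X\triangleright-,F(-))$ is built so that its relative-end condition \emph{is} the $\vc$-module naturality condition), so you are effectively re-proving that proposition in this special case rather than avoiding it. You should also make explicit that the subobject of the ordinary end cut out by the parametrized family of conditions \eqref{dinat:end:module:left} exists; in a finite abelian category this follows from finite length, but it is worth a sentence, since the relative end's universal property is stated independently and you are relying on its comparison map to the ordinary end being a monomorphism.
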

Here, the new ingredient that appears is the {\it relative end} $\oint$, a categorical tool developed in \cite{BM} within the context of module categories over tensor categories. In this theory of relative (co)ends, additional data is required for the functor from which the end is computed. In the usual theory of (co)ends, this extra data is free, and it is given by the additivity of the functor. In our case, the functor is $\uhom(-,-):\Mo^{\op}\times \Mo\to \ca$, the internal Hom of the module category $\Mo$. Specifically, the extra data needed here is what we call a {\it prebalancing}, which is a natural isomorphism
$$\beta^V_{M,N}:\uhom(M,  F(V\triangleright  N))\to  \uhom(V^*\triangleright M , F(N)). $$
The subtlety  lies in the fact that the isomorphism $\beta$ is related with the braiding of the category $\vc$. When $\vc=\vect_\ku$, the category of finite-dimensional vector spaces with the canonical braiding, the isomorphism $\beta$ stems from the additive structure of the functor $\uhom(-,-),$ and it turns out that the relative end coincides with the usual end.

\medbreak

The functor $\bar{\rho}:\End_\vc(\Mo)\to \ca$ is a $\ca$-bimodule functor, thus we can apply the center 2-functor, thus obtaining a functor $\Zc(\bar{\rho}):\Zc_\ca(\End_\vc(\Mo)) \to \Zc(\ca)$. In Theorem \ref{in-the-centralizer}  we prove that the restriction of this functor to a certain tensor subcategory $\ca(\vc,\Mo)$ of $\Zc_\ca(\End_\vc(\Mo)) $ we have a commutative diagram
\begin{equation*}
\xymatrix{
\ca(\vc,\Mo)\simeq \ca^*_\Mo \ar[d]_{}\ar[rr]^{\Psi}&& \Zc^\vc(\ca)
 \ar[d]^{} \\
\Zc_\ca(\End_\vc(\Mo)) \ar[rr]_{\Zc(\bar{\rho}) }&& \Zc(\ca),}
\end{equation*}
where the vertical arrows are inclusions. Be warned that there are two different relative centers here, $\Zc^\vc$ and $\Zc_\ca$.  The proof of this result is cumbersome. The heart of the proof highlights the importance of the chosen prebalancing $\beta$  to compute the relative end, and the relevance of the braiding of the category $\vc$. Since $\Psi$ is a (lax) monoidal functor, $A_{\vc,\Mo}:=\Psi(\Id_\Mo)$ is an algebra in $\Zc^\vc(\ca).$

\medbreak

As a consequence of our constructions, in Corollary \ref{central-hm}, we show that the  adjunction  $(\Zc(\rho):\Zc^\vc(\ca)\to \ca^*_{\Mo}, \Psi:\ca^*_{\Mo},\to \Zc^\vc(\ca))$ is monadic in case $\Mo=\ca$. We refer to the Hopf monad $T:\ca\to \ca, $ associated to it, given by 
$$T(X)=\oint_{Y\in \ca} Y\ot X\ot Y^*, $$
as the {\it relative central} Hopf monad. Monadicity entails a monoidal equivalence between the category  of $T$-modules in $\ca$ is and
$\Zc^\vc(\ca).$ In Corollary \ref{def-relat-lag} we prove that the algebras $A_{\vc,\Mo}$ are, indeed, commutative algebras in $\Zc^\vc(\ca)$ and in Section 4.3, that these algebras are the same as algebras defined by   Laugwitz  and Walton\cite{LW3}.

In Section \ref{subsection:braided-adjoint} we compute a particular example, when $H\in \vc$ is a braided Hopf algebra, and $\ca={}_H\vc$ is the category of $H$-modules inside $\vc$. In this case we obtain a certain {\it braided adjoint algebra} associated to $H$.

\medbreak

The complexity of presenting commutative algebras $A_{\vc,\Mo}$
using a pair of adjoint functors pays off. In Section \ref{Section:bosonhopf} we apply this construction in the case of Hopf algebras. We start with a finite-dimensional quasitriangular Hopf algebra $(T,R)$, with $R$-matrix $R\in T\otk T$. If $H\in \Rep(T)$ is a (braided) Hopf algebra, then there is a  tensor functor $\Rep(T)\to \mathcal Z(\Rep(H\# T))$, making $\Rep(H\# T)$ a $\Rep(T)$-central tensor category, \cite[Example 4.6]{LW2}.  Here $H\# T$ is the (usual) Hopf algebra obtained by bosonization \cite{AS1}.

\medbreak

Since any exact indecomposable $\Rep(H\# T)$-module category is equivalent to the category ${}_K\Mo$ of finite-dimensional representations of a left $H\# T$-comodule algebra $K$ \cite{AM}, we prove, in Corollary  \ref{s^t-iso-end-coro}  that, the algebra $A_{\Rep(T), {}_K\Mo}$ is described as follows. As a vector space $A_{\Rep(T), {}_K\Mo}$ coincides with the vector subspace of linear functions $\alpha:H\# T\otk K\to K $ such that for any $h\in H, k,l\in K, x\in H\# T$
\begin{itemize}
    \item $\alpha$ is a $K$-module map, $\alpha(k\_{-1}x\ot k\_0 l)=k\alpha(x\ot l)$;

 \item     $\alpha(x\ot k)=\alpha(x\ot 1) k;$
  \item $\alpha$ is a $T$-comodule map, $R^{2}\ot \alpha((1\#R^{1})x\ot k)=\pi(\alpha(x\ot k)\_{-1})\ot \alpha(x\ot k)\_{0}$;
\end{itemize}
where $\pi:H\#T\to T$ is the canonical projection.  The explicit Yetter-Drinfeld module algebra structure over $H\# T$ is given by
\begin{itemize}
    \item  The $H\# T$-action, $(h\cdot \alpha)(x\otimes k)=\alpha(xh\otimes k)$;
 \item the $H\# T$-coaction, $\delta(\alpha)= \alpha\_{-1}\otimes\alpha_0,$ such that $$\alpha_{-1}\otimes\alpha_0(x\otimes k)=\Sc (x\_1)\alpha(x\_2\ot 1)\_{-1} x\_3\otimes\alpha( x\_2\otimes 1)\_0 k.$$

 \item The product, $\alpha.\beta(x\ot k)=\alpha(x\_1\ot \beta(x\_2\ot k)).$
\end{itemize}
In Section \ref{SECTION:Taft}, we apply this description in case $T$ is the group algebra of a cyclic group with certain non-trivial $R$-matrix and $H=\ku[x]/(x^n)$. In this case $H\# T$ is the Taft algebra. For certain comodule algebras $K$ over the Taft algebras we compute explicitly Shimizu's adjoint algebra $A_{ {}_K\Mo}$ and then we explicitly compute the subalgebra $A_{\Rep(T), {}_K\Mo}$.

\subsection*{Acknowledgments} We thank  Azat Gainutdinov for many interesting conversations. We also thank  Sonia Natale and Kenichi Shimizu for patiently answering our  questions. In particular, K. Shimizu helped us with the proof of Proposition \ref{CW-algebras}. The work of N.B and M.M. was partially supported by Secyt-U.N.C., Foncyt and CONICET Argentina. We thank the support of MATH-AmSud program (Grant 23-MATH-01). We also thank the observations made by the anonymous referee, which improved the presentation of the paper.

\section{Preliminaries and Notation}
Throughout this paper, $\ku$ will denote an algebraically closed field. 
 We shall denote by $\vect_\ku$ the category of finite dimensional $\ku$-vector spaces. 
 All categories in this paper will be abelian $\ku$-linear and \emph{finite}, in the sense of  \cite{EO}. All monoidal categories will be assumed to be strict. 
A \textit{braided tensor category} is a pair $(\vc,\sigma)$ where $\vc$ is a tensor category and $\sigma_{V,W}:V\ot W\to W\ot V$ is a \textit{braiding}, that is, a family of natural isomorphisms satisfying
\begin{equation}\label{braiding1} \sigma_{V,U\ot W}=(\id_U\ot \sigma_{V,W})(\sigma_{V,U}\ot\id_W), \ \ 
\sigma_{V\ot U, W}=(\sigma_{V,W}\ot\id_U) (\id_V\ot \sigma_{U,W}).
\end{equation}

\subsection{Hopf algebras}
Let $T$ be a finite dimensional Hopf algebra. We shall denote by ${}^T_T\YD$ the category of finite-dimensional \textit{Yetter-Drinfeld modules.}  An object $V\in {}^T_T\YD$ is a vector space over $\ku$ that is also a left $T$-module $\cdot:T\otk V\to V$,  a left $T$-comodule $\lambda:V\to T\otk V$ such that
\begin{equation*}\lambda(h\cdot v)=h\_1 v\_{-1} \Sc(h\_3)\ot h\_2\cdot v\_0, \text{ for any } h\in T, v\in V.
\end{equation*}
  If $V\in {}^T_T\YD$, $\sigma_X: V\otk X\to X\otk V$, given by $\sigma_X(v\ot x)=v\_{-1}\cdot x\ot v\_0$ is a half-braiding for $V$.

\medbreak

Let $(T, R)$ be a
quasitriangular Hopf algebra \cite{Rad}. That is, $R=R^1\ot R^2 \in T \otimes
T$ is an invertible element, called an \emph{$R$-matrix},
fulfilling conditions:

\begin{itemize}\item $(\Delta \otimes \id)(R) = R_{13}R_{23}$, $(\id \otimes \Delta)(R) = R_{13}R_{12}$.

\item $(\epsilon \otimes \id)(R) = 1$, $(\id \otimes \epsilon)(R) = 1$.

\item  $\Delta^{\cop}(h) = R \Delta(h) R^{-1}$, for any $h\in T$.
\end{itemize}

The following relations with the antipode of $T$ are well-known:
\begin{equation*} (\mathcal S \otimes \id)(R) = R^{-1}
= (\id \otimes \mathcal S^{-1})(R), \quad (\mathcal S \otimes
\mathcal S)(R) = R.\end{equation*}
Under these conditions, the category $\Rep(T)$ is a braided tensor category with braiding $\sigma_{V,W}(v\otimes w)=R^2\cdot w\otimes R^1\cdot v,$ for any $V,W\in \Rep(T)$. 
The inverse of the braiding is $\sigma^{-1}_{V\otimes W}(w\otimes v)=R^{-1}(v\otimes w)=\Sc(R^1)\cdot v\otimes R^2\cdot w.$

Let $H\in \Rep(T)$ be a Hopf algebra inside this braided tensor category. That is, $H$ has an algebra structure $m:H\otk H\to H,$ a coalgebra structure $\Delta:H\to H\otk H$ such that both are $T$-module morphisms.  The category of left $H$-modules inside $\Rep(T)$ is again a tensor  category.  If $V,W\in \Rep(T)$ then, the tensor product $V\otk W$ has an action of $H$ as follows. For any $h\in H, v\in V, w\in W$
\begin{equation}\label{h-act}
    h\cdot(v\ot w)=h\_1\cdot (R^2\cdot v)\ot (R^1\cdot h\_2)\cdot w. \end{equation}

 The vector space $H$ has structure of Yetter-Drinfeld  module over $T$ with coaction given by
$$\lambda:H\to T\otk H, \ \ \lambda(h)=R^2\ot R^1\cdot h.$$
Moreover $H$ is a Hopf algebra inside ${}^T_T\YD$.
Whence, we can consider the bosonization $H\# T$, see \cite{AS1}. This is a (usual) Hopf algebra with product and coproduct given by for any $h,y\in H, t,r\in T$ $$(h\# t)(y\# r)=h(t\_1\cdot y)\# t\_2 r, \quad \Delta(h\# t)=h\_1\# h\_2\_{-1} t\_1\ot h\_2\_0\#t\_2.$$
Given the structure of the coaction of $H$, the coproduct of the bosonization is
$$\Delta(h\# t)=h\_1\# R^2 t\_1\ot R^1\cdot h\_2\#t\_2. $$
We shall denote  by $\pi:H\# T\to T,$  denotes the canonical projection, that is
\begin{equation}\label{can-proj} \pi(h\#t)=\epsilon(h) t.
 \end{equation}
 Here $\epsilon:H\to \ku$ is the counit. The following result seems to be well-known.
 
\begin{lema}\label{f-equiv}\cite[Thm 4.2]{Maj2} There exist an equivalence of tensor categories 
${}_H\Rep(T)\simeq \Rep(H\# T).$\qed
\end{lema}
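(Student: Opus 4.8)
The plan is to exhibit the equivalence by a functor which is the identity on underlying vector spaces and linear maps; in fact this will give an isomorphism of categories, which is stronger than what is claimed. Given $V\in{}_H\Rep(T)$, which by definition carries a $T$-action and an $H$-action $\cdot:H\otk V\to V$ that is a morphism of $T$-modules, I would equip the underlying space with the $H\# T$-action $(h\# t)\cdot v:=h\cdot(t\cdot v)$ for $h\in H$, $t\in T$, $v\in V$; conversely, restricting an $H\# T$-module along the algebra embeddings $H\to H\# T$, $h\mapsto h\# 1$, and $T\to H\# T$, $t\mapsto 1\# t$, produces a $T$-action and an $H$-action. The first point to verify is that $(h\# t)\cdot v=h\cdot(t\cdot v)$ is an associative unital action: associativity follows from the multiplication rule $(h\# t)(y\# r)=h(t\_1\cdot y)\# t\_2 r$ together with the $T$-linearity of the $H$-action, namely $t\cdot(y\cdot v)=(t\_1\cdot y)\cdot(t\_2\cdot v)$. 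In the other direction one checks that the $H$-action extracted from an $H\# T$-module is automatically $T$-linear, because $(1\# t)(h\# 1)=(t\_1\cdot h)\# t\_2$ inside the bosonization. Since both constructions leave the underlying vector space and morphisms untouched and are visibly mutually inverse, they give an isomorphism of $\ku$-linear abelian categories ${}_H\Rep(T)\cong\Rep(H\# T)$.

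Next I would verify that this isomorphism is monoidal, and this is the only step where the braiding of $\Rep(T)$ genuinely enters. On a tensor product $V\ot W$ inside ${}_H\Rep(T)$ the $T$-module structure is the standard diagonal one and the $H$-action is the one in \eqref{h-act}, so $h\# t$ acts by $v\ot w\mapsto h\cdot\big(t\_1\cdot v\ot t\_2\cdot w\big)$, which by \eqref{h-act} equals
\[
\big(h\_1\cdot(R^2\cdot(t\_1\cdot v))\big)\ot\big((R^1\cdot h\_2)\cdot(t\_2\cdot w)\big).
\]
On the other hand, on $V\ot W$ inside $\Rep(H\# T)$ the action of $h\# t$ is computed from the coproduct $\Delta(h\# t)=h\_1\# R^2 t\_1\ot R^1\cdot h\_2\# t\_2$, which yields precisely the same formula. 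Finally the unit of ${}_H\Rep(T)$ is the one-dimensional trivial $H$-module in $\Rep(T)$, and it maps to the one-dimensional module on which $H\# T$ acts through its counit $\epsilon(h\# t)=\epsilon(h)\epsilon(t)$, the unit of $\Rep(H\# T)$; the associativity and unit constraints on both sides are those of $\vect_\ku$, hence preserved.

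The two computational checks just described — associativity of $(h\# t)\cdot v=h\cdot(t\cdot v)$ and the identification of the $\Rep(H\# T)$-tensor action with \eqref{h-act} — are the only nontrivial points, and I expect the coproduct computation to be where care is needed: it is exactly there that the $R$-matrix appearing in $\Delta(h\# t)$ has to be reconciled with the $R$-matrix appearing in the tensor product \eqref{h-act} of modules inside $\Rep(T)$. Both reduce to a direct substitution using the product and coproduct formulas for the bosonization recalled above together with the $R$-matrix axioms, so the argument becomes routine once these are written out. Alternatively, one may simply invoke \cite{Maj2} and \cite{AS1}, where the bosonization and this equivalence are established in full generality.
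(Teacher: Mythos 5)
Your proposal is correct and gives the standard direct verification underlying Majid's Theorem 4.2, which the paper invokes by citation without reproducing a proof: the identity-on-objects functor transporting the $H$- and $T$-actions to the $H\#T$-action $(h\#t)\cdot v=h\cdot(t\cdot v)$, with the tensor-compatibility check reducing to matching the coproduct $\Delta(h\#t)=h_1\#R^2t_1\otimes R^1\cdot h_2\#t_2$ against the $H$-action \eqref{h-act} on $V\ot W$. This is exactly the argument the cited reference establishes, so there is nothing to add.
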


\subsection{Hopf Monads}\label{h-monads} Let $\ca$ be a  category.
A \emph{monad} on $\ca$ is an algebra in the strict  monoidal  category
$\End(\ca)$, that is, a triple $(T,\mu,\eta)$ where $T:\ca\to \ca$ is a functor,
$\mu:T^2\to T$ and $\eta:\Id\to T$ are natural transformations such that
\begin{align}\label{monads-axioms} \mu_X T(\mu_X)=\mu_X\mu_{T(X)}, \quad  \quad
\mu_X\eta_{T(X)}=\id_{T(X)}=\mu_X T(\eta_X).
\end{align}

Let $(T,\mu,\eta)$ be a monad on a category $\ca$. 
The category of $T$-modules, that we will  denote by $\ca^T$ is the category of pairs $(X,r)$, where $X$ is an object in $\ca$,
$r:T(X) \to X$ is a morphism in $\ca$ such that:
\begin{equation}\label{mod-monads}
r T(r)= r \mu_X \quad \text{and} \quad r \eta_X= \id_X.
\end{equation}
Given two $T$-modules $(X,r)$ and $(Y,s)$ in $\ca$, a morphism of
$T$-modules from $(X,r)$ to $(Y,s)$ is a
morphism $f\in \Hom_\ca(X,Y)$  such that $f\circ r=s\circ T(f)$.

\medbreak A \emph{bimonad} on a monoidal category $\ca$ is a monad
$(T,\mu,\eta)$ on
 $\ca$  such that the functor $T$ is equipped with a comonoidal structure $\xi_{X,Y}:T(X\ot Y)\to T(X)\ot T(Y)$ and
the natural transformations $\mu$ and $\eta$ are comonoidal
transformations.
If $T$ is a bimonad on the monoidal category $\ca$, then $\ca^T$ is a monoidal
category with tensor product
$(X,r)\ot (Y,s)= (X\ot Y, (r\ot s)\xi_{X,Y}),$
for all $ (X,r),(Y,s) \in \ca^T$. The  unit  object of $\ca^T$ is  $(\uno, \phi)$.
 For more details see  \cite{BV}.
The next result will be useful later.
\begin{prop}\label{dominant}\cite[Prop. 5.1]{BN}  Let $\ca, \Do$ be tensor categories, $F:\ca\to \Do$ an exact tensor functor with left adjoint $G:\Do\to \ca$, and $T= FG$ the  Hopf monad associated to the adjunction $(G,F)$. Then the following assertions are equivalent:
 \begin{itemize}
\item[(i) ] The functor F is dominant; 
 \item[(ii) ] The unit $\eta$ of the monad T is a monomorphism;
 \item[(iii) ] The monad $T$ is faithful;
 
 \item[(iv) ] The left adjoint of F is faithful;
 \item[(v) ] The right adjoint of F is faithful.\qed
 \end{itemize}
\end{prop}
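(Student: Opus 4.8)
The plan is to deduce all five equivalences from elementary properties of adjunctions together with rigidity of $\ca$ and $\Do$. Write $\eta\colon\Id_\Do\to FG=T$ for the unit and $\epsilon\colon GF\to\Id_\ca$ for the counit of the adjunction $(G,F)$ — so $\eta$ is exactly the unit of the monad $T=FG$ — and recall that, $F$ being exact and $\ca,\Do$ finite, $F$ also has a right adjoint, denoted $F^{\ra}$. I would first settle the block $(ii)\Leftrightarrow(iii)\Leftrightarrow(iv)$ by a short cycle. For $(ii)\Rightarrow(iii)$, use naturality of $\eta$: if $T(f)=0$ for $f\colon Y\to Y'$ then $\eta_{Y'}\circ f=T(f)\circ\eta_Y=0$, so $f=0$ because $\eta_{Y'}$ is monic; hence $T$ is faithful. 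The implication $(iii)\Rightarrow(iv)$ is immediate, as $G$ is a factor of $T=FG$: $G(f)=0$ forces $T(f)=0$, hence $f=0$. Finally $(iv)\Rightarrow(ii)$ is the classical fact that a faithful left adjoint has a monic unit: from $\eta_Y\circ a=\eta_Y\circ b$ apply $G$ and use the triangle identity $\epsilon_{GY}\circ G(\eta_Y)=\id_{GY}$ to obtain $G(a)=G(b)$, whence $a=b$.

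Next, $(i)\Leftrightarrow(ii)$: if $(ii)$ holds then for each $Y\in\Do$ the unit $\eta_Y\colon Y\to FG(Y)=F\bigl(G(Y)\bigr)$ is monic, exhibiting $Y$ as a subobject of $F$ evaluated at $G(Y)\in\ca$, so $F$ is dominant; conversely, if $\iota\colon Y\hookrightarrow F(X)$ is monic then its transpose writes $\iota=F(g)\circ\eta_Y$ for some $g\colon G(Y)\to X$, and the first factor of a monomorphism is a monomorphism, so $\eta_Y$ is monic — and as $Y$ is arbitrary this gives $(ii)$. Finally, $(v)\Leftrightarrow(i)$: dually to the fact used above, $F^{\ra}$ is faithful if and only if the counit $FF^{\ra}\to\Id_\Do$ is componentwise epic; this says precisely that every $Y\in\Do$ is a quotient of some $F(X)$ (take $X=F^{\ra}(Y)$; conversely transpose an epimorphism $F(X)\twoheadrightarrow Y$). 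And since $F$ preserves duals and $\ca,\Do$ are rigid, ``every object of $\Do$ is a quotient of some $F(X)$'' is equivalent, by dualizing, to ``every object of $\Do$ is a subobject of some $F(X)$'', that is, to $(i)$. This closes the chain.

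The argument is almost entirely formal; beyond abstract adjunction theory the only inputs are exactness of $F$ (to produce $F^{\ra}$) and rigidity of $\ca,\Do$ together with $F$ preserving duals (used only in the final step of $(v)\Leftrightarrow(i)$). The one point I would take care with — and which I expect to be the only real (if minor) obstacle — is fixing the meaning of \emph{dominant}: in the present finite, exact, rigid setting the formulations ``subobject of $F(X)$'', ``quotient of $F(X)$'' and ``subquotient of $F(X)$'' all coincide, so the first is the convenient choice, and one should check this reduction at the outset. (It is also worth recording, though not needed here, that an exact tensor functor between tensor categories is automatically faithful, because $\uno\hookrightarrow X\ot{}^{*}X$ for every nonzero $X$.)
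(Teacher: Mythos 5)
Your argument is correct, and every step checks out: the cycle $(ii)\Rightarrow(iii)\Rightarrow(iv)\Rightarrow(ii)$ is the standard package of facts about the unit of an adjunction and faithfulness of a left adjoint; the bridge $(i)\Leftrightarrow(ii)$ is exactly the transpose-and-cancel argument one wants; and $(v)\Leftrightarrow(i)$ correctly combines the dual characterization (right adjoint faithful iff counit of $F\dashv F^{\ra}$ componentwise epic) with the observation that, because $F$ preserves duals and dualization is exact, ``subobject of $F(X)$'' and ``quotient of $F(X')$'' describe the same class of objects in $\Do$. The one point you flagged yourself --- pinning down what ``dominant'' means --- is indeed the only delicate spot, and you resolve it correctly in the rigid finite setting.

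A remark on the comparison you asked for: the paper does not prove this proposition; it is cited as \cite[Prop.~5.1]{BN} with no argument given (the \textup{\qedsymbol} follows the statement immediately). So there is no in-paper proof to compare against. Your write-up is a self-contained reconstruction, and it is in the same spirit as the proof in Brugui\`eres--Natale: they likewise reduce everything to standard adjunction lemmas (faithful left adjoint $\Leftrightarrow$ monic unit, faithful right adjoint $\Leftrightarrow$ epic counit) together with the rigidity argument identifying the sub/quotient formulations of dominance. If anything, you might make the transition $(v)\Leftrightarrow(i)$ slightly more explicit by writing out the one-line duality isomorphism $Y \simeq {}^{*}(Y^{*}) \hookrightarrow {}^{*}F(X) \simeq F({}^{*}X)$ coming from an epimorphism $F(X)\twoheadrightarrow Y^{*}$, but the logic as stated is sound.
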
 

\subsection{Central monoidal categories} 

Let $(\vc, \sigma)$ be a braided tensor category. 
 A tensor category $\ca$ is  called \emph{$\vc$-central} if there exists a faithful braided monoidal functor $G: \vc\to \Zc(\ca)$ \cite[Def. 4.4]{LW2}.

\begin{defi} If $\ca$ is  a $\vc$-central tensor category, then the relative center $\Zc^\vc(\ca)$ \cite{LW2} is the full subcategory  of $\Zc(\ca)$ consisting of objects $(A,\sigma_{A,-})$ such that $\sigma_{A,X} :A\ot X\to X\ot A$ is a family of natural isomorphisms such that
\begin{align}\label{V-relative-center}
\sigma_{A,X\ot Y}=(\id_X\ot \sigma_{A,Y})(\sigma_{A,X}\ot\id_Y), \ \ 
\sigma_{G(V),A}\sigma_{A,G(V)} =\id_{ A\ot G(V)},
\end{align}
for all $V\in \vc$. In another words, the relative center $\Zc^\vc(\ca)$ is the M\"uger centralizer $\mug_{\Zc(\ca)}(G(\vc)).$ 
\end{defi}

\begin{exa} \begin{itemize}
    \item[1.] Any abelian $\ku$-linear tensor category $\ca$ has  a canonical action $\triangleright:\vect_\ku\times \ca\to \ca$. The functor $\vect_\ku\to \ca$, $V\mapsto V\triangleright \uno$ is a central tensor functor, that is, there is a braided tensor functor $G:\vect_\ku\to \Zc(\ca).$

\item[2.]    Assume that $G$ is a finite group acting on a tensor category $\ca$. There is a canonical braided functor $G:\Rep(G)\to \Zc(\ca^G).$

\item[3.] Assume that $(H,R)$ is a quasitriangular Hopf algebra. If $A\subseteq H$ is the Hopf algebra generated by the first tensorands of $R$, then there exists a morphism $F:D(A)\to H$ of quasitriangular Hopf algebras \cite[Theorem 2]{Rad}. This implies that there is a braided tensor functor $G:\Rep(H)\to \Zc(\Rep(A)),$ somehow measuring how far is $H$ from being cocommutative.
\end{itemize}
    
\end{exa}

\section{Representations of tensor categories} A  left \emph{module} category over  
$\ca$ is a  category $\Mo$ together with a $\ku$-bilinear 
bifunctor $\rhd: \ca \times \Mo \to \Mo$, exact in each variable,  endowed with 
 natural associativity
and unit isomorphisms 
$$m_{X,Y,M}: (X\otimes Y)\triangleright   M \to X\triangleright  
(Y\triangleright M), \ \ \ell_M: \uno \triangleright  M\to M.$$ 
These isomorphisms are subject to the following conditions:
\begin{equation}\label{left-modulecat1} m_{X, Y, Z\triangleright M}\; m_{X\otimes Y, Z,
M}= (\id_{X}\triangleright m_{Y,Z, M})\;  m_{X, Y\otimes Z, M}(a_{X,Y,Z}\triangleright\id_M),
\end{equation}
\begin{equation}\label{left-modulecat2} (\id_{X}\triangleright \ell_M)m_{X,{\bf
1} ,M}= r_X \triangleright \id_M,
\end{equation} for any $X, Y, Z\in\C, M\in\Mo.$ Here $a$ is the associativity constraint of $\C$.
Sometimes we shall also say  that $\Mo$ is a $\ca$-\emph{module category} or a representation of $\ca$.

\medbreak

Let $\Mo$ and $\Mo'$ be a pair of $\C$-module categories. A\emph{ module functor} is a pair $(F,c)$, where  $F:\Mo\to\Mo'$  is a functor equipped with natural isomorphism
$c_{X,M}: F(X\triangleright M)\to
X\triangleright F(M),$
for any $X,Y\in  \ca$, $M\in \Mo$,  such that
\begin{align}\label{modfunctor1}
(\id_X \triangleright  c_{Y,M})c_{X,Y\triangleright M}F(m_{X,Y,M})=
m_{X,Y,F(M)}\, c_{X\otimes Y,M}, \ \ 
\ell_{F(M)} \,c_{\uno ,M} =F(\ell_{M}).
\end{align}

A \textit{natural module transformation} between  module functors $(F,c)$ and $(G,d)$ is a 
 natural transformation $\theta: F \to G$ such
that
\begin{gather}
\label{modfunctor3} d_{X,M}\theta_{X\triangleright M} =
(\id_{X}\triangleright \theta_{M})c_{X,M},
\end{gather}
 for any $X\in \ca$, $M\in \Mo$. The vector space of natural module transformations will be denoted by $\Nat_{\!m}(F,G)$. Two module functors $F, G$ are \emph{equivalent} if there exists a natural module isomorphism
$\theta:F \to G$. We denote by $\Fun_{\ca}(\Mo, \Mo')$ the category whose
objects are module functors $(F, c)$ from $\Mo$ to $\Mo'$ and arrows module natural transformations. 

\medbreak
Two $\C$-modules $\Mo$ and $\Mo'$ are {\em equivalent} if there exist module functors $F:\Mo\to
\Mo'$, $G:\Mo'\to \Mo$, and natural module isomorphisms
$\Id_{\Mo'} \to F\circ G$, $\Id_{\Mo} \to G\circ F$.
\medbreak
A module is
{\em indecomposable} if it is not equivalent to a direct sum of
two non trivial modules. From \cite{EO}, a
module $\Mo$ is \emph{exact} if   for any
projective object
$P\in \ca$ the object $P\triangleright M$ is projective in $\Mo$, for all
$M\in\Mo$. If $\Mo$ is an exact indecomposable module category over $\ca$, the dual category $\ca^*_\Mo=\End_\ca(\Mo)$ is a finite tensor category \cite{EO}. The tensor product is the composition of module functors.

\begin{lema}\label{modfunct-adjoint} \cite[ Lemma 2.11]{DSS} Assume that $F:\Mo\to \No$, $G:\No\to\Mo$ is a pair of functors, with $F$ the left adjoint to $G$. We shall denote by $\epsilon:F\circ G\to \Id_{\No}$, $\eta:\Id_{\Mo}\to G\circ F$, the counit and unit of this adjunction.The following holds.
\begin{itemize}
\item[(i)] If $\Mo,  \No$ are left $\ca$-module categories  and $(F,c):\Mo\to \No$ is a module functor then $G$ has a module functor structure given by, for any $X\in \ca$, $N\in \No$
$$ e^{-1}_{X,N}=G(\id_X\triangleright \epsilon_N) G(c_{X,G(N)}) \eta_{X\triangleright G(N)},$$

\item[(ii)]   If $\Mo,  \No$ are right $\ca$-module categories  and $(F,d):\Mo\to \No$ is a module functor then $G$ has a module functor structure given by, for any $X\in \ca$, $N\in \No$
$$ h^{-1}_{N,X}=G(\epsilon_N\triangleleft \id_X) G(d_{G(N),X}) \eta_{G(N)\triangleleft X}.$$
\qed
\end{itemize}
\end{lema}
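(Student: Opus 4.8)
The plan is to identify the two displayed formulas as the \emph{mates} of the module constraint of $F$ along the adjunction $F\dashv G$, and then to verify the three things one needs of such a mate: that it is natural, that it is invertible, and that its inverse equips $G$ with a module functor structure, i.e.\ satisfies \eqref{modfunctor1}. I will carry out part (i); part (ii) is the same argument with the left action of $\ca$ replaced by the right action, and it can also be deduced from (i) by viewing a right $\ca$-module category as a left $\ca^{\rev}$-module category.

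Fix $X\in\ca$ and regard $X\triangleright(-)$ as an endofunctor of both $\Mo$ and $\No$. The module constraint of $(F,c)$ is then a natural transformation $c_{X,-}\colon F\circ(X\triangleright-)\Rightarrow(X\triangleright-)\circ F$, and the composite
\[
X\triangleright G(N)\xrightarrow{\ \eta_{X\triangleright G(N)}\ }GF\big(X\triangleright G(N)\big)\xrightarrow{\ G(c_{X,G(N)})\ }G\big(X\triangleright FG(N)\big)\xrightarrow{\ G(\id_X\triangleright\epsilon_N)\ }G(X\triangleright N)
\]
is exactly the mate $(X\triangleright-)\circ G\Rightarrow G\circ(X\triangleright-)$ of $c_{X,-}$ along $F\dashv G$; this is the asserted $e^{-1}_{X,N}$. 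Naturality of $e^{-1}_{X,N}$ in $N$ and in $X$ follows at once from naturality of $\eta$, $\epsilon$ and $c$ together with the interchange law, by a square I would write out once. Denote by $e_{X,N}$ the inverse we are about to construct, so that $(G,e)$ will be the claimed module functor.

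The one point that is not formal nonsense is invertibility, and here I would use that $\ca$ is rigid, so that for every $X\in\ca$ the endofunctor $X\triangleright(-)$ (on $\Mo$ and on $\No$) has a right adjoint $X^{*}\triangleright(-)$. Applying the operation ``right adjoint'' to the natural \emph{isomorphism} $c_{X,-}\colon F\circ(X\triangleright-)\xrightarrow{\ \sim\ }(X\triangleright-)\circ F$, and using that
\[
\big(F\circ(X\triangleright-)\big)^{\ra}=(X^{*}\triangleright-)\circ G,\qquad \big((X\triangleright-)\circ F\big)^{\ra}=G\circ(X^{*}\triangleright-),
\]
yields a natural isomorphism $G\circ(X^{*}\triangleright-)\xrightarrow{\ \sim\ }(X^{*}\triangleright-)\circ G$. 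Since $(-)^{*}$ is an anti-autoequivalence of $\ca$, $X^{*}$ runs over all objects of $\ca$ as $X$ does, so for every object $Y$ we obtain an isomorphism $G(Y\triangleright N)\xrightarrow{\ \sim\ }Y\triangleright G(N)$ having the same source and target as $e_{Y,N}$. A diagram chase with the triangle identities and the naturality of the units and counits of $F\dashv G$ and of $(X\triangleright-)\dashv(X^{*}\triangleright-)$ identifies this isomorphism with $e_{Y,N}$, which is therefore invertible. I expect this comparison of the ``left-handed'' mate $e^{-1}_{X,N}$ with the ``right-handed'' mate just constructed to be the most delicate bookkeeping in the proof.

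Finally, the coherence conditions \eqref{modfunctor1} for $(G,e)$: because the formation of mates is compatible with vertical and horizontal composition of natural transformations and sends identities to identities, the associativity pentagon and the unit triangle for $e$ are the mates of the corresponding identities for $c$ (which hold since $(F,c)$ is a module functor) combined with the axioms \eqref{left-modulecat1}--\eqref{left-modulecat2} of $\Mo$ and $\No$. Concretely, one expands both sides of each identity into a pasting built out of $\eta$, $\epsilon$, $c$ and the associativity and unit constraints of the two module categories, and reduces one to the other by repeated use of naturality and the triangle identities --- long but mechanical. For (ii) one runs the identical argument with $(-\triangleleft X)$, the right-module constraint $d$, and the right adjoint $(-\triangleleft{}^{*}X)$ in place of $X\triangleright(-)$, $c$ and $X^{*}\triangleright(-)$.
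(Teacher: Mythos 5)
The paper does not give its own proof of this lemma; it cites \cite[Lemma 2.11]{DSS} and immediately ends with the qed symbol, so there is no internal argument to compare yours against. What you have written is a correct proof sketch of the standard argument. Your identification of $e^{-1}_{X,N}$ as the mate of $c_{X,-}$ along $F\dashv G$ is right, as is the observation that naturality and the coherence diagrams \eqref{modfunctor1}--\eqref{modfunctor2} follow formally from the compatibility of mates with pasting, once one also feeds in the axioms \eqref{left-modulecat1}--\eqref{left-modulecat2} of the two module categories. You have also correctly isolated invertibility as the one step that is \emph{not} formal (mates of isomorphisms need not be isomorphisms in general), and your use of rigidity --- passing to the right adjoints $(F(X\triangleright-))^{\ra}=(X^{*}\triangleright-)G$ and $((X\triangleright-)F)^{\ra}=G(X^{*}\triangleright-)$, getting an induced isomorphism between them, and then relabelling $X^{*}\rightsquigarrow Y$ --- is the standard mechanism. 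The remaining task of showing this right-adjoint transpose is literally two-sided inverse to your $e^{-1}_{Y,N}$ is a genuine (if routine) triangle-identity computation that you have flagged but not carried out; that is the only place where the sketch stops short of a proof. The reduction of (ii) to (i) via $\ca^{\rev}$ is also correct. In short: no gap in approach, but the invertibility bookkeeping and the coherence verifications are promised rather than executed.
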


\subsection{The internal Hom}\label{subsection:internal hom} Let $\ca$ be a  tensor category and $\Mo$  a left $\C$-module category. For any pair of objects $M, N\in\Mo$, the \emph{internal Hom} from $M$ to $N$ is an object $\uhom(M,N)\in \C$ representing the left exact functor $$\Hom_{\Mo}(-\triangleright M,N):\ca^{\op}\to \vect_\ku.$$ Sometimes we shall denote the internal Hom of the module category $\Mo$ by $\uhom_\Mo$ to emphasize that it is related to this module category. In particular, there are natural isomorphisms, one the inverse of each other, for all $M, N\in \Mo$, $X\in\ca$
\begin{equation}\label{Hom-interno}\begin{split}\phi^X_{M,N}:\Hom_{\ca}(X,\uhom(M,N))\to \Hom_{\Mo}(X\triangleright M,N), \\
\psi^X_{M,N}:\Hom_{\Mo}(X\triangleright M,N)\to \Hom_{\ca}(X,\uhom(M,N)).
\end{split}
\end{equation}

If $\widetilde{N}\in \Mo$, $\widetilde{X}\in\ca$ and $h: \widetilde{X}\to X$, $f:N\to  \widetilde{N}$ are morphisms, naturality of $\phi$ implies that diagrams
\begin{equation*}
\xymatrix{
\Hom_{\ca}(X,\uhom(M,N))\ar[d]^{\beta \mapsto \uhom(\mathrm{id},f)\beta}\ar[rr]^{\phi^X_{M,N}}&& \Hom_{\Mo}(X\triangleright M,N) \ar[d]_{\alpha\mapsto f\alpha} \\
\Hom_{\ca}(X,\uhom(M,\widetilde{N})) \ar[rr]^{\phi^X_{M,\widetilde{N}}}&& \Hom_{\Mo}(X\triangleright M,\widetilde{N}), }
\end{equation*}
\begin{equation*}
\xymatrix{
\Hom_{\ca}(X,\uhom(M,N))\ar[d]^{\alpha \mapsto \alpha h}\ar[rr]^{\phi^{X}_{M,N}}&& \Hom_{\Mo}(X\triangleright M,N) \ar[d]_{\alpha\mapsto \alpha (h\triangleright \mathrm{id}_M)} \\
\Hom_{\ca}(\widetilde{X},\uhom(M,N)) \ar[rr]^{\phi^{\widetilde{X}}_{M,N}}&& \Hom_{\Mo}(\widetilde{X}\triangleright M,N) }
\end{equation*}
commute. That is
\begin{equation}\label{phi-2} f \phi^X_{M,N}(\alpha)= \phi^X_{M,\widetilde{N}}(\uhom(\id_M,f)\alpha),
\ \    \phi^{X}_{M,N}(\alpha)(h\triangleright \id_M)= \phi^{\widetilde{X}}_{M,N}(\alpha h),
\end{equation}
for any $\alpha\in \Hom_{\ca}(X,\uhom(M,N))$. Also, the naturality of $\psi$ implies that for any $X, \widetilde{X}\in\ca$, $N, \widetilde{N}\in \Mo$, and any pair of morphisms $\gamma:\widetilde{X}\to X$, $f:N\to \widetilde{N}$ the diagrams
\begin{equation*}
\xymatrix{
 \Hom_{\Mo}(X\triangleright M,N)\ar[d]^{\alpha \mapsto \alpha(\gamma\triangleright\mathrm{id}_M)}\ar[rr]^{\psi^{X}_{M,N}}&&\Hom_{\ca}(X,\uhom(M,N))  \ar[d]_{\alpha\mapsto \alpha\gamma} \\
 \Hom_{\Mo}(\widetilde{X}\triangleright M,N) \ar[rr]^{\psi^{\widetilde{X}}_{M,N}}&&\Hom_{\ca}(\widetilde{X},\uhom(M,N)),}
\end{equation*}
\begin{equation*}
\xymatrix{
 \Hom_{\Mo}(X\triangleright M,N)\ar[d]^{\alpha \mapsto f\alpha}\ar[rr]^{\psi^{X}_{M,N}}&&\Hom_{\ca}(X,\uhom(M,N))  \ar[d]_{\alpha\mapsto \uhom(\mathrm{id},f)\alpha} \\
 \Hom_{\Mo}(X\triangleright M,\widetilde{N}) \ar[rr]^{\psi^X_{M,\widetilde{N}}}&&\Hom_{\ca}(X,\uhom(M,\widetilde{N})),}
\end{equation*}
commute. 
That is
\begin{equation}\label{psi-2}\psi^{\widetilde{X}}_{M,N}(\alpha(\gamma\triangleright\id_M))=  \psi^{X}_{M,N}(\alpha) \gamma, \ \ 
\psi^X_{M,\widetilde{N}}(f \alpha)= \uhom(\id_M,f) \psi^{X}_{M,N}(\alpha),
\end{equation}
for any $\alpha\in \Hom_{\Mo}(X\triangleright M,N).$ For each $M \in \Mo$ the functor $\Mo \to \ca$, $N \mapsto \uhom(M,N)$ is a right adjoint functor for the functor 
\begin{equation}\label{action functor}
    \ca \to \Mo, \quad X \mapsto X \triangleright M.
\end{equation}
The unit and counit of this adjunction are given by
\begin{align*}
    \underline{coev}_{X,M}: X \to \uhom (M,X \triangleright M), \quad \underline{coev}_{X,M}=\psi^{X}_{M,X \triangleright M}(\id), \\
    \underline{ev}_{M,N}: \uhom(M,N) \triangleright M \to N, \quad \underline{ev}_{M,N}= \phi^{\uhom(M,N)}_{M,N}(\id).
\end{align*}

For any $M_1, M_2, M_3 \in \Mo$, define the composition
$$\underline{comp}_{M_1, M_2, M_3}: \uhom(M_2,M_3) \ot \uhom(M_1,M_2) \to \uhom(M_1,M_3)$$
\begin{equation*}
\underline{comp}_{M_1, M_2, M_3}= \psi^{\uhom(M_2,M_3)\ot \uhom(M_1,M_2)}_{M_1,M_3} \left ( \underline{ev}_{M_2,M_3} (\id_{\uhom(M_2,M_3)} \triangleright \underline{ev}_{M_1,M_2})\right).
\end{equation*}

\medbreak

Since the functor \eqref{action functor} is a $\ca$-module functor with structure morphism given by the associativity of $\Mo$, so is its right adjoint. We denote by
$\mathfrak{a}_{X,M,N}: \uhom(M,X \triangleright N) \to X\ot \uhom(M,N),$
the left $\ca$-module structure of $\uhom(M,-)$ given by
\begin{equation*}
    \mathfrak{a}^{-1}_{X,M,N}=\uhom \left (\id_M, (\id_X \triangleright \underline{ev}_{M,N}) m_{X, \uhom(M,N),M} \right ) \underline{coev}_{X\ot \uhom(M,N),M}.
\end{equation*}

\medbreak

Define also  morphisms $\mathfrak{b}_{X,M,N}: \uhom(X \triangleright M,N) \ot X \to \uhom(M,N),$ given by
\begin{equation}
    \mathfrak{b}_{X,M,N}= \uhom(\id_M, \underline{ev}_{X\triangleright M,N} m_{\uhom(X\triangleright M,N),X,M})\underline{coev}_{\uhom(X \triangleright M,N)\ot X,M}.
\end{equation}

We note that $\mathfrak{b}_{X,M,N}$ is natural in the variables $M$ and $N$, and dinatural in $X$. All these morphisms were defined in  \cite{Sh2}.

\begin{lema} \label{comp, a and b} \cite[Lemma A.3]{Sh2}
    For all $M_1,M_2,M_3 \in \Mo$, 
\begin{align*}
    \underline{comp}_{M_1, M_2, M_3} &= \uhom (\id_{M_1}, \underline{ev}_{M_2,M_3})\mathfrak{a}^{-1}_{\uhom(M_2,M_3),M_1,M_2}\\
    &=\mathfrak{b}_{\uhom(M_1,M_2),M_1,M_3} \left ( \uhom(\underline{ev}_{M_1,M_2}, \id_{M_3}) \ot \id_{\uhom(M_1,M_2)} \right ).
\end{align*}
\end{lema}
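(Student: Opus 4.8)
The plan is to prove the identity by unwinding all the defining adjunction formulas for $\uhom$, $\underline{coev}$, $\underline{ev}$, $\mathfrak{a}$, $\mathfrak{b}$ and $\underline{comp}$, and reducing everything to manipulations of the natural isomorphisms $\phi$ and $\psi$ together with their naturality relations \eqref{phi-2} and \eqref{psi-2}. Since all the morphisms in sight are images under $\psi$ of explicit composites, the cleanest approach is to apply $\phi^{\uhom(M_2,M_3)\ot\uhom(M_1,M_2)}_{M_1,M_3}$ to both sides and show the resulting morphisms in $\Hom_\Mo\big((\uhom(M_2,M_3)\ot\uhom(M_1,M_2))\triangleright M_1, M_3\big)$ agree; because $\phi$ is a bijection this suffices, and after this translation each side becomes a composite built out of $\underline{ev}$'s and the module associativity $m$, which can be compared directly.

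For the first equality, I would start from the definition of $\underline{comp}$, namely $\underline{comp}_{M_1,M_2,M_3}=\psi\big(\underline{ev}_{M_2,M_3}(\id\triangleright\underline{ev}_{M_1,M_2})\big)$, so applying $\phi$ returns $\underline{ev}_{M_2,M_3}(\id_{\uhom(M_2,M_3)}\triangleright\underline{ev}_{M_1,M_2})$ up to the module associativity constraint. On the other hand, I would expand $\mathfrak{a}^{-1}_{\uhom(M_2,M_3),M_1,M_2}$ using its definition in terms of $\underline{coev}$ and $\underline{ev}$ and $m$, then use the second relation in \eqref{psi-2} (compatibility of $\psi$ with post-composition by a morphism $f$, here $f=\underline{ev}_{M_2,M_3}$, turned into $\uhom(\id,f)$) to absorb the outer $\uhom(\id_{M_1},\underline{ev}_{M_2,M_3})$, and finally the counit identity $\phi^{\uhom(M,N)}_{M,N}(\id)=\underline{ev}_{M,N}$ together with the triangle identity relating $\underline{coev}$ and $\underline{ev}$ to collapse the expression. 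This is exactly the kind of bookkeeping already carried out in \cite[Appendix A]{Sh2}, so I expect no conceptual difficulty — it is a matter of carefully tracking which naturality square of $\phi$ or $\psi$ is being invoked at each step.

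For the second equality I would proceed symmetrically, now expanding $\mathfrak{b}_{\uhom(M_1,M_2),M_1,M_3}$ via its definition and using the first relation in \eqref{psi-2} (compatibility of $\psi$ with pre-composition by $\gamma\triangleright\id_M$, here $\gamma=\uhom(\underline{ev}_{M_1,M_2},\id_{M_3})$) to bring the factor $\uhom(\underline{ev}_{M_1,M_2},\id_{M_3})\ot\id_{\uhom(M_1,M_2)}$ inside, then again invoking the counit/triangle identities for $\underline{ev}$ and $\underline{coev}$. The key compatibility to keep straight is that $\mathfrak{b}_{X,M,N}$ is defined so that, morally, $\underline{ev}_{M,N}$ factors through $\underline{ev}_{X\triangleright M,N}$ after applying $\uhom(\underline{ev}$-type maps, and this is precisely what makes the composition identity come out.

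The main obstacle I anticipate is purely organizational rather than mathematical: the formulas for $\mathfrak{a}$, $\mathfrak{b}$, $\underline{comp}$ each involve a $\psi$ applied to a composite that itself contains $\underline{ev}$'s and module associativities, so after applying $\phi$ one must repeatedly use the two halves of \eqref{phi-2} and \eqref{psi-2} in the correct order and correctly handle the strictness conventions (all monoidal categories are strict, so $a=\id$, which removes the associativity constraints of $\ca$ but leaves the module constraints $m$ of $\Mo$). Keeping the $m$'s bracketed correctly, and verifying that the pentagon-type axiom \eqref{left-modulecat1} for $\Mo$ is what makes the two bracketings match, is the one place where a genuine (though standard) coherence argument is needed; everything else is formal adjunction yoga, and the statement being quoted verbatim from \cite[Lemma A.3]{Sh2} means the proof is essentially a reference plus these remarks.
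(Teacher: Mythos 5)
The paper gives no proof of this lemma at all: it is stated as a quotation of \cite[Lemma~A.3]{Sh2} and closed with a \verb|\qed|. So your job is really to reconstruct Shimizu's argument, and your plan does that correctly. Applying $\phi$ to both sides (equivalently, reading everything through $\underline{coev}_{X,M}=\psi^X_{M,X\triangleright M}(\id)$ and the naturality squares \eqref{psi-2}) is exactly the right reduction, and it does work.

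A few refinements worth recording. For the first equality no triangle identity and no counit identity are needed at all: expand $\mathfrak a^{-1}$, fold the outer $\uhom(\id_{M_1},\underline{ev}_{M_2,M_3})$ into the inner $\uhom(\id_{M_1},-)$, rewrite $\underline{coev}_{X,M_1}$ as $\psi^X_{M_1,X\triangleright M_1}(\id)$, and apply the second relation of \eqref{psi-2} once; what emerges is precisely $\psi^X_{M_1,M_3}\bigl(\underline{ev}_{M_2,M_3}(\id\triangleright\underline{ev}_{M_1,M_2})\,m_{\uhom(M_2,M_3),\uhom(M_1,M_2),M_1}\bigr)$, which is $\underline{comp}$ (with the module constraint $m$ that the paper suppresses in the displayed definition of $\underline{comp}$ but writes explicitly in $\mathfrak a$ and $\mathfrak b$). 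For the second equality, after writing $\mathfrak b_{Y,M_1,M_3}=\psi^{Z\ot Y}_{M_1,M_3}(\underline{ev}_{Y\triangleright M_1,M_3}\,m_{Z,Y,M_1})$ with $Y=\uhom(M_1,M_2)$, $Z=\uhom(Y\triangleright M_1,M_3)$, the first relation of \eqref{psi-2} moves $\uhom(\underline{ev}_{M_1,M_2},\id_{M_3})\ot\id_Y$ inside; after commuting it past $m$ by naturality of $m$, the decisive step is the \emph{dinaturality of $\underline{ev}$ in its first argument}, namely $\underline{ev}_{M_2,M_3}(\id\triangleright\underline{ev}_{M_1,M_2})=\underline{ev}_{Y\triangleright M_1,M_3}\bigl(\uhom(\underline{ev}_{M_1,M_2},\id_{M_3})\triangleright\id\bigr)$. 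This is what your ``morally, $\underline{ev}_{M,N}$ factors through $\underline{ev}_{X\triangleright M,N}$'' gestures at, and it should be named and cited as such, since it is the one nontrivial ingredient; the triangle identities and the module pentagon \eqref{left-modulecat1} are in fact not used anywhere in either half of the proof — naturality of $m$ and of $\psi$ is enough.
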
\qed{}

\begin{exa} If $\Mo=\ca$ as a left $\ca$-module category with the regular action, then $\uhom(X,Y)=Y\ot X^*$. 
\end{exa}

\subsection{Module categories over Hopf algebras}
This section provides a brief accounting of the known results on module categories over the tensor category of finite-dimensional representations of a finite dimensional Hopf algebra $H$.

If $\lambda:K\to H\otk K$ is a finite-dimensional left $H$-comodule algebra, then the
category of finite-dimensional left $K$-modules ${}_K\Mo$ is a
left module category over $\Rep(H)$. The action is given by $\triangleright:\Rep(H)\times
{}_K\Mo\to {}_K\Mo$, $X\triangleright M=X\otk M$, for all $X\in \Rep(H),
M\in {}_K\Mo$. The left $K$-module structure on $X\otk M$ is given
by $\lambda$, that is, if $k\in K$, $x\in X, m\in M$ then 
$$k\cdot
(x\ot m)= \lambda(k) (x\ot m)=k\_{-1}\cdot x\ot k\_{0}\cdot m.$$
The internal Hom of this module category can be computed explicitly. If $M, N$ are left $K$-modules, then the space $\Hom_K(H\otk M, N)$ has a left $H$-action given by
$$(h\cdot \alpha)(g\ot m)=\alpha(gh\ot m),\text{ for } h,g\in H, m\in M.$$
There is an isomorphism of $H$-modules 
\begin{align}\label{int-hom-hopf}
    \uhom(M,N)\simeq \Hom_K(H\otk M, N).
\end{align} The linear maps 
\begin{align}\label{phi-psi-hopf}\begin{split}\phi^X_{M,N}:\Hom_H(X, \Hom_K(H\otk M, N))\to \Hom_K(X\otk M, N),\\ 
\psi^X_{M,N}:\Hom_K(X\otk M, N)\to \Hom_H(X, \Hom_K(H\otk M, N)),
\end{split}
\end{align}
defined by
$\phi^X_{M,N}(\alpha)(x\ot m)=\alpha(x)(1\ot m)$,  $\psi^X_{M,N}(\beta)(x)(h\ot m)=\beta(h\cdot x\ot m)$, for any $h\in H$, $X\in \Rep(H)$, $M,N\in {}_K\Mo$, $x\in X$, $m\in M$, are well-defined maps, one the inverse of each other. 

\begin{defi} If $K$ is a left $H$-comodule algebra, we shall denote by ${}_K^H\mathcal{M}_K$ the category of $K$-bimodules, such that they have a left $H$-comodule structure that is a morphism of $K$-bimodules. 
\end{defi}
It follows from  \cite[Prop 1.23]{AM} that, there is an equivalence of categories
\begin{align}\label{equiv-modfunct}
    {}_{K}^{H}\mathcal{M}_K\simeq \End_{\Rep(H)}(_K\Mo),\ \  (P,\lambda)\mapsto (P\otimes_K -,c)
\end{align}
where the $H$-comodule structure and the module structure are related  by \begin{equation}\label{lambda}\lambda(p)=(\id_H\otimes \cdot)c_{H,K}(p\otimes 1\otimes 1).\end{equation}

\subsection{Bimodule categories} Let $\ca, \Do$ be finite tensor categories. A $(\ca, \Do)-$\emph{bimodule category}  is a category $\Mo$  with left $\ca$-module category
 and right $\Do$-module category  structures with natural
isomorphism
\begin{align}\label{associativ-constraint-bimodulecat}
\gamma_{X,M,Y}:(X\triangleright M) \triangleleft Y\to X\triangleright (M\triangleleft Y),
\end{align} 
$X\in\ca, Y\in\Do, M\in \Mo$, satisfying certain axioms. We refer the reader to \cite{Gr}, \cite{Gr2} for a detailed definition of bimodule categories.  If $\Mo, \No$ are $(\ca, \Do)-$bimodule categories, a \textit{bimodule functor} is a triple $(F,c,d):\Mo\to \No$, where $(F,c)$ is a $\ca$-module functor, $(F,d)$ is a $\Do$-module functor and for any $X\in \ca$, $Y\in \Do$, $M\in \Mo$ the following equation is satisfied
\begin{equation}\label{bimod-funct}
\gamma_{X, F(M),Y}(c_{X,M}\triangleleft\id_Y) d_{X\triangleright M, Y}=(\id_X\triangleright d_{M,Y})c_{X, M\triangleleft Y} F(\gamma_{X,M,Y}).
\end{equation}

\subsection{The relative center of bimodule categories}\label{subsection:relative center}

Let $\ca$ be a tensor category and $\Mo$ a $\ca$-bimodule category. The \emph{
relative center of $\Mo$}, denoted by  by $\Zc_\ca(\Mo)$, is the category of $\ca$-bimodule functors from $\ca$ to $\Mo$.
Explicitly, objects of  $\Zc_\ca(\Mo)$ are pairs $(M,\sigma)$, where $M$ is an object of $\Mo$ and
$$\sigma_X:M \triangleleft X\xrightarrow{\sim} X\triangleright M $$ is a family of natural   isomorphisms such that
\begin{equation}\label{half-braid}
m^l_{X,Y,M}\sigma_{X\otimes Y}=(\id_X\triangleright \sigma_Y)\gamma_{X,M,Y} (\sigma_X\triangleleft \id_Y)  m^r_{M,X,Y},
\end{equation}
where $\gamma_{X,M,Y}$ is the associativity constraint, see 
\eqref{associativ-constraint-bimodulecat}. The isomorphism $\sigma$ is called the \textit{half-braiding} for $M$. 
The relative center is a 2-functor
$$\Zc_\ca: {}_\ca\Bimo \to \Ab_\ku,$$
where ${}_\ca\Bimo$ is the 2-category whose 0-cells are $\ca$-bimodule categories, 1-cells are bimodule functors and 2-cells are bimodule natural transformations. Here $\Ab_\ku$ is the 2-category of finite $\ku$-linear abelian categories. If $(F,c,d):\Mo\to \No$ is a bimodule functor, then $\Zc_\ca(F):\Zc_\ca(\Mo)\to \Zc_\ca(\No)$ is the functor determined by $\Zc_\ca(F)(M,\sigma)=(F(M), \widetilde\sigma),$ where $\widetilde\sigma_X: F(M)\triangleleft  X\to X\triangleright F(M)$ is defined as
\begin{equation}\label{relative-half-braid} 
\widetilde\sigma_X=c_{X,M} F(\sigma_X)d^{-1}_{M,X} \ \ \text{ for any }X\in \ca.
\end{equation}

\begin{exa}\label{relat-center-rex}
\begin{itemize}
\item  If $\Mo, \No$ are left $\ca$-module categories, then the category of right exact functors $\Rex(\Mo,\No)$ is a $\ca$-bimodule category as follows. If $X\in \ca$, $F\in \Rex(\Mo, \No)$, $M\in \Mo$, then 
\begin{equation}\label{biaction-funct} (X\triangleright F)(M)=X\triangleright F(M), \quad (F\triangleleft X)(M)=F(X\triangleright M).
\end{equation}
 In this case, $\Zc_\ca(\Rex(\Mo,\No))\simeq \Fun_\ca(\Mo,\No)$.

\item  When $\ca$ is considered as a $\ca$-bimodule category, then $\Zc_\ca(\ca)=\Zc(\ca)$ is the usual Drinfeld center of the category $\ca$.
\end{itemize}
\end{exa}

\section{The (co)end for module categories}\label{Section:mcoends}

In this Section we recall the notion of \textit{relative (co)ends}; a tool developed in \cite{BM} in the context of representations of tensor categories, generalizing the well-known notion of (co)ends in category theory.

\medbreak
 Let $\ca$ be a  tensor category, $\Mo$ a left $\ca$-module category, $\Ac$ a category and $S:\Mo^{\op}\times \Mo\to \Ac$ is a functor equipped with natural isomorphism
\begin{equation} \beta^X_{M,N}: S(M,X\triangleright N)\to S(X^*\triangleright M,N),
\end{equation}
for any $X\in \ca, M,N\in \Mo$. We shall say that $\beta$ is a \textit{prebalancing} of the functor $S$. Sometimes we shall say that it is a $\ca$-\textit{prebalancing}, to emphasize the dependence on $\ca$.

\begin{exa}\label{prebalancing-vect} Any $\ku$-linear functor has a canonical $\vect_\ku$-prebalancing. Assume that $A$ is a finite-dimensional algebra.  The category of right $A$-modules $\moda_A$ has an action of $\vect_\ku$ as follows
$$\triangleright:\vect_\ku\times \moda_A \to \moda_A, \ \ X\triangleright M=X\otk M, $$
 for any $X\in \vect_\ku$, $M\in  \moda_A$.   For any  $x\in X$, we  denote by $\delta_x:X\to \ku$ the unique linear transformation that sends $x$ to 1, and any element of a chosen complement of $<x>$ to 0, and by $p^M_x:X\otk M\to M$, $p^M_x(y\ot m)=\delta_x(y) m$.
Assume that $\Ac$ is another category and $S:\moda_A^{\op}\times \moda_A\to \Ac$ is a (additive $\ku$-linear) functor. Then   $S$ has a canonical prebalancing $$\beta^X_{M,N}: S(M,X\triangleright N)\to S(X^*\triangleright M,N),\ \ \beta^X_{M,N}=\oplus_{i} S(p^M_{f_i}, p^N_{x_i}).$$ 

\end{exa}

\begin{defi}\label{defi:ds} The \textit{relative end} of the pair $(S,\beta)$ is an object $E\in \Ac$ equipped with dinatural transformations $\pi_M: E\xrightarrow{ . .} S(M,M)$ such that 
\begin{equation}\label{dinat:end:module:left}
S(\ev_X\triangleright \id_M,  \id_M) \pi_M= S(m_{X^*,X,M},  \id_M) \beta^X_{X\triangleright M,M} \pi_{X\triangleright M},
\end{equation}
for any $X\in \ca, M\in \Mo$, and is universal with this property. This means that, if $\widetilde{E}\in \Ac$ is another object with dinatural transformations $\xi_M:\widetilde{E}\xrightarrow{ . .} S(M,M)$, such that they fulfill \eqref{dinat:end:module:left}, there exists a unique morphism $h:\widetilde{E}\to E$ such that $\xi_M= \pi_M\circ h $.
\end{defi}
The relative end depends on the choice of the prebalancing. We will denote the relative end as $\oint_{M\in \Mo} (S,\beta)$, or sometimes simply as $\oint_{M\in \Mo} S$, when the prebalancing $\beta$ is understood from the context.

\smallbreak
The \textit{ relative coend} of the pair $(S,\beta)$ is defined dually. This  is an object $C\in \Ac$ equipped with dinatural transformations $\pi_M:S(M,M)\xrightarrow{ . .} C$ such that for any $X\in \ca, M\in \Mo$
\begin{equation}\label{dinat:coen:module:left} 
\pi_M=\pi_{X^*\triangleright M}\beta^X_{M,X^*\triangleright M} S(\id_M, m_{X,X^*,M})S(\id_M,\coev_X\triangleright\id_M)
\end{equation}
 universal with this property. This means that, if $\widetilde{C}\in \Ac$ is another object with dinatural transformations  $\lambda_M: S(M,M) \xrightarrow{ . .} \widetilde{C}$ such that they satisfy \eqref{dinat:coen:module:left}, there exists a unique morphism $g:C\to \widetilde{C}$ such that $g \circ \pi_M=\lambda_M$. The relative coend will be denoted  $\oint^{M\in \Mo} (S,\beta)$, or simply as $\oint^{M\in \Mo} S$.

\medbreak

A similar definition can be made for \textit{right} $\ca$-module categories. Let $\Ac$ be a category, and $\No$ be a right $\ca$-module category endowed with a functor $S:\No^{\op}\times \No\to \Ac$ with a \textit{prebalancing}
$$ \gamma^X_{M,N}: S(M\triangleleft X, N)\to S(M, N\triangleleft {}^* X), \ \ \text {for any } M,N\in \No,X\in \ca.$$ 
\begin{defi} The \textit{relative end} for $(S,  \gamma)$ is an object $E\in \Ac$ equipped with dinatural transformations  $\lambda_N:E\xrightarrow{ .. } S(N,N)$ such that 
\begin{equation}\label{dinat:end:module:right}
\lambda_N= S(\id_N, \id_N \triangleleft \ev_X)S(\id_N, m^{-1}_{N,X,{}^* X}) \gamma^X_{N,N\triangleleft X} \lambda_{N\triangleleft X},
\end{equation}
for any $N\in \No$, $X\in \ca$. We shall also denote this relative end by $\oint_{N\in \No} (S, \gamma)$.
Similarly, the \textit{relative coend} is an object $C\in \Bc$ with dinatural transformations $\lambda_N: S(N,N)\xrightarrow{ .. } C$  such that
\begin{equation}\label{dinat:coend:module:right}
\lambda_N S(\id_N  \triangleleft \coev_X,\id_N)= \lambda_{N  \triangleleft  {}^* X} \gamma^X_{N  \triangleleft  {}^* X, N}S(m^{-1}_{N,   {}^* X, X}, \id_N),
\end{equation}
for any $N\in \No$, $X\in \ca$. We shall also denote this relative coend by $\oint^{N\in \No} (S, \gamma)$. Both the relative end and coend satisfy a similar universal property as in Definition \ref{defi:ds}.
\end{defi}

In the next Proposition we collect some results about the relative (co)end that will be useful.  The reader is referred to \cite[Prop. 3.3]{BM}, \cite[Prop. 4.2]{BM}.

\begin{prop}\label{results-on-coend} Assume  $\Mo, \No$ are  left $\ca$-module categories, and $S, \widetilde{S}:\Mo^{\op}\times \Mo\to \Ac$ are functors equipped with $\ca$-prebalancings $$\beta^X_{M,N}: S(M,X\triangleright N)\to S(X^*\triangleright M,N), \ \ \widetilde{\beta}^X_{M,N}: \widetilde{S}(M,X\triangleright N)\to \widetilde{S}(X^*\triangleright M,N),$$ $X\in \ca, M,N\in \Mo$. The following assertions holds
\begin{itemize}

\item[1.] Assume that the module ends $\oint_{M\in \Mo} (S,\beta), \oint_{M\in \Mo} (\widetilde{S},\widetilde{\beta})$ exist and have dinatural transformations $\pi, \widetilde{\pi}$, respectively. If  $\gamma:S\to \widetilde{S}$ is a natural transformation  such that 
\begin{equation}\label{gamma-ind}
 \widetilde{\beta}^X_{M,N} \gamma_{(M,X\triangleright N)}=\gamma_{(X^*\triangleright M,N)} \beta^X_{M,N},
\end{equation}
then there exists a unique map $\widehat{\gamma}: \oint_{M\in \Mo} (S,\beta)\to \oint_{M\in \Mo} (\widetilde{S},\widetilde{\beta})$ such that   $\widetilde{\pi}_M \widehat{\gamma}= \gamma_{(M,M)} \pi_M$ for any $M\in \Mo$. If $\gamma$ is a natural isomorphism, then  $\widehat{\gamma}$ is an isomorphism.

\item[2.] For any pair of $\ca$-module functors $(F, c), (G, d):\Mo\to \No,$ the functor $$\Hom_{\No}(F(-), G(-)): \Mo^{\op}\times \Mo\to \vect_\ku$$
has a canonical prebalancing  given by
\begin{equation}\label{beta-for-hom}
 \beta^X_{M,N}: \Hom_{\No}(F(M), G(X\triangleright N))\to \Hom_{\No}(F(X^*\triangleright M), G(N))
\end{equation}
$$ \beta^X_{M,N}(\alpha)= (ev_X\triangleright \id_{G(N)}) m^{-1}_{X^*,X,G(N)} (\id_{X^*}\triangleright d_{X,N}\alpha)c_{X^*,M},$$
for any $X\in \ca, M, N\in \Mo$.
There is an isomorphism $$ \Nat_{\!m}(F,G)\simeq \oint_{M\in \Mo} (\Hom_{\No}(F(-), G(-)), \beta).$$

\item[3.] If the end  $\oint_{M\in \Mo} (S,\beta)$ exists, then for any object $U\in \Ac$,  the end $\oint_{M\in \Mo} \Hom_\Ac(U, S( -, -))$ exists, and there is an isomorphism
$$\oint_{M\in \Mo} \Hom_\Ac(U, S( -, -))\simeq  \Hom_\Ac(U, \oint_{M\in \Mo} (S,\beta) ).$$
Moreover, if $\oint_{M\in \Mo} \Hom_\Ac(U, S( -, -))$ exists for any $U\in \Ac$, then the end $\oint_{M\in \Mo} (S,\beta)$ exists.

\item[4.] If $H:\Ac\to \Ac'$ is a left exact functor, then there is an isomorphism
$$H(\oint_{M\in \Mo} (S,\beta))\simeq \oint_{M\in \Mo} (H\circ S,H(\beta)).$$\qed
\end{itemize}
\end{prop}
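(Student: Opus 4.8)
The plan is to establish the four assertions in turn, each by transporting the corresponding classical fact about (co)ends through the universal property of Definition~\ref{defi:ds}; the only genuinely new ingredient is the bookkeeping with the prebalancing condition \eqref{dinat:end:module:left}. I would use throughout that, when $\oint_{M\in\Mo}(S,\beta)$ exists, it is terminal among pairs $(\widetilde{E},(\xi_M)_M)$ with $\xi_M\colon\widetilde{E}\xrightarrow{ . .}S(M,M)$ dinatural and satisfying \eqref{dinat:end:module:left}, and that in the finite abelian setting it is moreover a \emph{finite} limit in $\Ac$---an equalizer of a pair of maps between finite biproducts of objects of the form $S(-,-)$, indexed by a projective generator (or the simples) of $\Mo$---a fact from \cite{BM} that I would quote rather than reprove. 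For assertion~(1) I would set $\xi_M:=\gamma_{(M,M)}\circ\pi_M$; ordinary dinaturality of $\xi$ is immediate from that of $\pi$ together with naturality of $\gamma$, and to check \eqref{dinat:end:module:left} for $(\widetilde{S},\widetilde{\beta})$ I would slide $\gamma$ through the defining diagram: push it past $\widetilde{S}(\ev_X\triangleright\id_M,\id_M)$ by naturality, invoke \eqref{dinat:end:module:left} for $(S,\beta)$, pull it back through $\widetilde{S}(m_{X^*,X,M},\id_M)$, and finally commute it past $\beta^X_{X\triangleright M,M}$ using the compatibility \eqref{gamma-ind}, arriving at $\widetilde{S}(m_{X^*,X,M},\id_M)\widetilde{\beta}^X_{X\triangleright M,M}\xi_{X\triangleright M}$. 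The universal property then produces the unique $\widehat{\gamma}$ with $\widetilde{\pi}_M\widehat{\gamma}=\gamma_{(M,M)}\pi_M$; and if $\gamma$ is invertible, conjugating \eqref{gamma-ind} shows $\gamma^{-1}$ satisfies it too, so it induces $\widehat{\gamma^{-1}}$, and the composites $\widehat{\gamma}\widehat{\gamma^{-1}}$ and $\widehat{\gamma^{-1}}\widehat{\gamma}$ commute with every dinatural projection, hence are identities by the uniqueness clause.

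Assertion~(2) is where the real work lies. First I would check that \eqref{beta-for-hom} does define a natural isomorphism of the stated type: naturality in $M$ and $N$ is formal, and each constituent---$c_{X^*,-}$, $d_{X,-}$, the associativity constraint $m$, and $\ev_X$ (inverted modulo $\coev_X$ via the rigidity triangles)---is invertible, so that $\beta$ has an explicit inverse. Next I would recall that the ordinary end $\int_{M\in\Mo}\Hom_\No(F(M),G(M))$ is tautologically $\Nat(F,G)$, so that the relative end is the subspace of those natural transformations $\theta=(\theta_M)$ which additionally satisfy \eqref{dinat:end:module:left}. The crux is to substitute the explicit $\beta$ of \eqref{beta-for-hom} into \eqref{dinat:end:module:left} and expand both sides using the module-functor axioms \eqref{modfunctor1} for $(F,c)$ and $(G,d)$, the module-category axioms \eqref{left-modulecat1}--\eqref{left-modulecat2}, and the rigidity triangles for $\ev_X,\coev_X$; I expect this to collapse exactly to $d_{X,M}\theta_{X\triangleright M}=(\id_X\triangleright\theta_M)c_{X,M}$, that is, to the module-transformation axiom \eqref{modfunctor3}. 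Hence the relative end coincides with $\Nat_{\!m}(F,G)$, the dinatural projections being $\theta\mapsto\theta_M$. This diagram chase is the step I expect to be the main obstacle; the remaining items are formal.

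For assertion~(3) the forward direction rests on $\Hom_\Ac(U,-)$ preserving limits: a morphism $T\to\oint_{M\in\Mo}(S,\beta)$ is the same datum as a dinatural cone $T\xrightarrow{ . .}S(M,M)$ obeying \eqref{dinat:end:module:left}, so applying $\Hom_\Ac(U,-)$ identifies $\Hom_\Ac(U,\oint_{M\in\Mo}(S,\beta))$ with such cones having source $U$, i.e.\ with $\oint_{M\in\Mo}\Hom_\Ac(U,S(-,-))$ computed with the prebalancing $\Hom_\Ac(U,\beta)$. For the converse I would observe that $U\mapsto\oint_{M\in\Mo}\Hom_\Ac(U,S(-,-))$ is a limit over $M$ of left-exact contravariant functors $\Ac^{\op}\to\vect_\ku$, hence itself left exact and contravariant, hence representable by the same representability principle used in Section~\ref{subsection:internal hom} to define the internal Hom; the representing object, equipped with the evident dinatural family and prebalancing condition, is the sought-for $\oint_{M\in\Mo}(S,\beta)$.

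Finally, for assertion~(4) I would use that a left exact functor $H\colon\Ac\to\Ac'$ preserves equalizers and finite biproducts, hence preserves the finite-limit presentation of the relative end recalled in the first paragraph: applying $H$ to the dinatural cone $\pi_M$ and to the identities \eqref{dinat:end:module:left} yields a cone for $(H\circ S,H(\beta))$---note that $H(\beta)$ is a prebalancing, being the image of a natural isomorphism---and left exactness of $H$ promotes this cone to a universal one, giving $H(\oint_{M\in\Mo}(S,\beta))\simeq\oint_{M\in\Mo}(H\circ S,H(\beta))$. Alternatively, when $H$ has a left adjoint $L$, this drops out of assertion~(3): for $U'\in\Ac'$ and $E:=\oint_{M\in\Mo}(S,\beta)$ one has, naturally in $U'$, the chain $\Hom_{\Ac'}(U',H(E))\cong\Hom_\Ac(LU',E)\cong\oint_{M\in\Mo}\Hom_\Ac(LU',S(-,-))\cong\oint_{M\in\Mo}\Hom_{\Ac'}(U',(H\circ S)(-,-))$, after which the Yoneda lemma concludes. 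In summary, items~(1), (3), (4) are the standard calculus of ends pushed through the universal property, and essentially the entire difficulty is concentrated in the explicit computation of item~(2) identifying \eqref{dinat:end:module:left} for the canonical prebalancing with the module-transformation axiom \eqref{modfunctor3}.
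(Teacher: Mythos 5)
Your proposal is essentially a reconstruction of a proof that this paper does not actually contain: Proposition \ref{results-on-coend} is imported verbatim from \cite[Prop.~3.3 and 4.2]{BM} and stated here with a \qed and no argument, so the only comparison possible is with the strategy of that reference, which your outline matches in spirit (universal property of the relative end for (1) and (3), explicit identification of the wedge condition with the module-transformation axiom for (2)). Your item (1) is correct and complete as sketched: dinaturality of $\gamma_{(M,M)}\pi_M$ plus the slide through \eqref{gamma-ind} is exactly what is needed, and uniqueness handles the isomorphism clause. Item (2) is correctly diagnosed as the real content; for the record, the direction ``module naturality $\Rightarrow$ \eqref{dinat:end:module:left}'' follows from \eqref{modfunctor1} for $(F,c)$ together with naturality of the module associativity and of $c$, while the converse needs the standard dualization trick (apply $X\triangleright-$, insert $\coev_X$, and use the rigidity triangle) to extract \eqref{modfunctor3} from the wedge condition; you gesture at this but do not execute it, which is the one place your write-up is genuinely incomplete rather than merely compressed. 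Two smaller caveats: the claim in your opening paragraph that \cite{BM} provides a finite equalizer presentation of the relative end over a projective generator is not something you can safely quote, and it is also unnecessary, since your alternative argument for (4) via a left adjoint of $H$ (which exists because a $\ku$-linear left exact functor between finite abelian categories is of the form $\Hom(M,-)$ and so has a left adjoint) combined with item (3) and Yoneda is self-contained; and in the converse half of (3) you should say explicitly why $U\mapsto\oint_{M\in\Mo}\Hom_\Ac(U,S(-,-))$ is left exact with finite-dimensional values (it is an intersection of kernels of maps between left exact functors, and finite-dimensionality is the existence hypothesis), before invoking representability as for the internal Hom. With those points made explicit your argument is a correct, essentially self-contained substitute for the citation.
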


\begin{rmk}\label{rmk:prebalancing} In item (3) of above Proposition, the prebalancing of the functor $\Hom_\Ac(U, S( M, X\triangleright N))$ is the canonical one, that is
$$\Hom_\Ac(U, S(  M, X\triangleright N))\to \Hom_\Ac(U, S( X^* \triangleright M, N)), \ \  f\mapsto \beta^X_{M,N}\circ f.$$
\end{rmk}

\begin{rmk} If $S:\Mo^{\op}\times \Mo\to \Ac$ is any additive $\ku$-linear functor, then the relative (co)end $\oint (S,\beta)$, coincide with the usual (co)end $\int S$, in case $\beta$ is the canonical prebalancing explained in Example \ref{prebalancing-vect}.
\end{rmk}

\section{Central Hopf monads coming from central monoidal categories}

Let $\vc$ be a braided tensor category and $\ca$ a $\vc$-central monoidal category via a right exact braided tensor functor $G:\vc\to \Zc(\ca)$. For any object $V\in \vc$, $G(V)$ has a braiding that we shall denote by 
$$\sigma_{G(V),X}: G(V)\ot X\to X\ot G(V),$$
for any $X\in \ca$. 
Let $\Mo$ be a left $\ca$-module category with action functor
$-\triangleright -:\ca\to \End(\Mo).$ Then, $\Mo$ is a left $\vc$-module category, with action given by, for any $V\in \vc$, $M\in \Mo$
$$V\blacktriangleright M:=G(V)\triangleright M.$$
\begin{lema}\label{rho-in-V-modfunct} Let $\Mo$ be a left $\ca$-module category with action functor $-\triangleright -:\ca\to \End(\Mo).$ The following statements holds.
\begin{itemize}
\item[1.] For any $X\in \ca$, the functor $\rho(X)=X\triangleright -$ is a $\vc$-module functor. In particular, if $\uc$ is the forgetful functor then we have a commutative diagram
\begin{equation*}
\xymatrix{&\ca 
\ar[dl]_{\rho}
\ar[dr]^{X\mapsto X \triangleright -}&\\
\End_\vc(\Mo) \ar[rr]^{\uc}&&  \End(\Mo),}
\end{equation*}
\item[2.] The category $\End_\vc(\Mo)$ is a $\ca$-bimodule category.

\item[3.] The functor $\rho:\ca\to \End_\vc(\Mo)$ is monoidal, exact and faithful. It is also a $\ca$-bimodule functor. 
\end{itemize}

\end{lema}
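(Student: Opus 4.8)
The plan is to verify the three assertions in order, since each one essentially sets up the data needed for the next. For item (1), I would exhibit the $\vc$-module functor structure on $\rho(X) = X \triangleright -$ directly. For $V \in \vc$ and $M \in \Mo$, the required isomorphism $c^X_{V,M}\colon X \triangleright (V \blacktriangleright M) \to V \blacktriangleright (X \triangleright M)$, i.e. $X \triangleright (G(V) \triangleright M) \to G(V) \triangleright (X \triangleright M)$, is built from the associativity constraint $m$ of $\Mo$ and the half-braiding $\sigma_{G(V),X}\colon G(V)\ot X \to X \ot G(V)$ coming from $G(V) \in \Zc(\ca)$; concretely $c^X_{V,M} = m_{G(V),X,M}\,(\sigma^{-1}_{G(V),X}\triangleright \id_M)\,m^{-1}_{X,G(V),M}$. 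I would then check the two $\vc$-module functor axioms \eqref{modfunctor1}: the hexagon-type axiom follows from the module category pentagon \eqref{left-modulecat1} together with the compatibility \eqref{half-braid} of the half-braiding with the tensor product (which is exactly property \eqref{braiding1} for the braiding of $\Zc(\ca)$, since $G$ is braided monoidal), and the unit axiom follows from \eqref{left-modulecat2} and the fact that $\sigma_{G(\uno),X} = \id$. Commutativity of the displayed triangle with the forgetful functor $\uc$ is then immediate, as $\uc$ simply discards $c$.

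For item (2), I would note that $\End_\vc(\Mo) = \Fun_\vc(\Mo,\Mo)$ is a tensor category (composition of $\vc$-module endofunctors), and give its $\ca$-bimodule structure by the formulas analogous to \eqref{biaction-funct}: for $X \in \ca$ and $F \in \End_\vc(\Mo)$, set $(X \triangleright F)(M) = X \triangleright F(M)$ and $(F \triangleleft X)(M) = F(X \triangleright M)$. One must check that $X \triangleright F$ and $F \triangleleft X$ are again $\vc$-module functors: for $X \triangleright F$ this uses the $\vc$-module structure of $\rho(X)$ from item (1) composed with that of $F$; for $F \triangleleft X$ it uses the $\vc$-module structure of $F$ precomposed with $\rho(X)$, again via item (1). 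The bimodule associativity constraint $\gamma_{X,F,Y}\colon (X\triangleright F)\triangleleft Y \to X \triangleright (F \triangleleft Y)$ is the identity at the level of underlying functors (both send $M$ to $X \triangleright F(Y \triangleright M)$), and the bimodule axioms reduce to coherence already available; the only genuine check is that $\gamma$ is a morphism of $\vc$-module functors, which unwinds to the compatibility of $\sigma_{G(V),-}$ with $m$.

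For item (3), the monoidal structure of $\rho\colon \ca \to \End_\vc(\Mo)$ is inherited from the monoidal structure of $X \mapsto (X \triangleright -) \in \End(\Mo)$ (given by the associativity $m$ of $\Mo$), once one checks that the structural isomorphisms $\rho(X)\circ\rho(Y) \cong \rho(X\ot Y)$ and $\Id \cong \rho(\uno)$ are morphisms of $\vc$-module functors — i.e. they are compatible with the $c$'s of item (1) — which again follows from \eqref{braiding1}/\eqref{half-braid} and the module pentagon. Exactness of $\rho$ is clear since $X \triangleright -$ is exact in each variable by definition of a module category and the $\vc$-module structure is carried by isomorphisms; faithfulness follows because $\uc\circ\rho$ is faithful (it is $X \mapsto X \triangleright -$, which is faithful as $\Mo$ is an exact module category with $\ca \hookrightarrow \End(\Mo)$ faithful) and $\uc$ is faithful. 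Finally the $\ca$-bimodule functor structure of $\rho$ is given by the obvious isomorphisms $\rho(X\ot Y) \cong X \triangleright \rho(Y)$ and $\rho(Y \ot X) \cong \rho(Y)\triangleleft X$ coming from $m$, and the bimodule compatibility \eqref{bimod-funct} is coherence in $\Mo$. I expect the main obstacle to be the bookkeeping in item (1): checking that $c^X_{V,M}$ satisfies \eqref{modfunctor1} requires carefully combining the module pentagon, naturality of $m$, and the hexagon \eqref{braiding1} for the half-braiding, and it is here that the braided (not merely central) nature of $G$ is used. Everything downstream is then a formal consequence.
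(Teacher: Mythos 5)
Your proposal is correct and takes essentially the same approach as the paper, which is in fact terser: the paper merely writes down the $\vc$-module structure $d^X_{V,M}=(\sigma_{G(V),X})^{-1}\triangleright\id_M$ for item~(1), the explicit structures $(X\triangleright d)_{V,M}$, $(d\triangleleft X)_{V,M}$ for item~(2), and deduces exactness and the bimodule-functor property of $\rho$ from the corresponding facts about the forgetful functor $\uc$; your plan fills in the axiom-checks that the paper leaves implicit. One small misattribution worth flagging: for item~(1) only the centrality and monoidality of $G$ are used (the half-braiding hexagons for $G(V)$ inside $\Zc(\ca)$), not the fact that $G$ is a \emph{braided} functor; the braidedness of $G$ is only exploited later, in Theorem~\ref{in-the-centralizer}, when showing that $\Zc_\ca(\bar\rho)$ lands in the M\"uger centralizer $\Zc^\vc(\ca)$.
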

\pf 1. For any $V\in \vc$, $M\in \Mo$ the module structure of the functor $\rho(X)$ is given by 
\begin{align*} d^X_{V,M}: \rho(X)(G(V)\triangleright M)\to G(V)\triangleright\rho(X)(M),\ \ 
d^X_{V,M}=(\sigma_{G(V),X})^{-1} \triangleright \id_M.
\end{align*}

2.   If $(F,d)\in \End_\vc(\Mo)$ then
$X\triangleright (F,d)=(X\triangleright F, X\triangleright d)$ and $(F,d)\triangleleft X=(F\triangleleft X,d\triangleleft X).$
The structure of the action on functors is the same as in \eqref{biaction-funct}. Also, for any $V\in \vc, M\in \Mo$ we have
$$(X\triangleright d)_{V,M}= \big((\sigma_{G(V),X})^{-1}\triangleright \id_{F(M)} \big)(\id_X \triangleright d_{V,M}), \ \  (d\triangleleft X)_{V,M}=d_{V,X\triangleright M} F((\sigma_{G(V),X})^{-1}\triangleright \id_M).$$
\medbreak

3. The functor $-\triangleright -:\ca\to \End(\Mo)$ is exact in each variable, and $\uc\circ \rho(X)= X\triangleright -$. Since $\uc$ is faithful, it reflects exact sequences. This implies that $\rho$ is exact.  Since the functor $\uc$ is a bimodule functor, it follows by a  that $\rho$ is also $\ca$-bimodule functor.
\epf

 Let us fix   a $\vc$-module functor $(F, d):\Mo\to \Mo$. For any $X\in \ca$, we shall introduce (related) $\vc$-prebalancings for the functors
  $$ \Hom_{\Mo}(X\triangleright -,F(-)) :\Mo^{\op}\times \Mo \to vect_\ku, \ \ \Hom_{\ca}(X, \uhom(-, F(-)))  :\Mo^{\op}\times \Mo \to vect_\ku,$$
$$\text{ and }\,\,\uhom(-, F(-)):\Mo^{\op} \times \Mo\to \ca,$$
as follows. The first pre-balancing comes as a particular case of \eqref{beta-for-hom} and the others are constructing transporting this pre-balancing.
For any $M, N\in \Mo$, $V\in \vc$, let us define first
$$\gamma^X_{V,M,N}:\Hom_{\Mo}(X\triangleright M,F(V\blacktriangleright N)) \to
 \Hom_{\Mo}( X\triangleright (V^*\blacktriangleright  M),F(N))  $$
 as the composition
\begin{align*}
&\Hom_\Mo(X\triangleright M, F(V\blacktriangleright N ))
\xrightarrow{f\mapsto  d_{V,N} f} \Hom_\Mo(X\triangleright M, V\blacktriangleright F( N )) \to \\
&\xrightarrow{g\mapsto (\ev_{G(V)}\triangleright \id_{F(N)}) (\id_{G(V^*)}\triangleright g) }  \Hom_\Mo((G(V^*)\ot X) \triangleright M, F( N )) \to \\
&\xrightarrow{f\mapsto f((\sigma^{V^*}_X )^{-1}\triangleright \id_M) } \Hom_\Mo( (X\ot G(V^*)) \triangleright M, F( N )).
\end{align*}

That is, if $f\in \Hom_\Mo(X\triangleright M, F(V\blacktriangleright N ))$, then 
\begin{align}\label{def-gamma}\begin{split}
\gamma^X_{V,M,N}(f)=  \big(\ev_{G(V)}\triangleright \id_{F(N)}\big)\big(\id_{G(V^*)}\triangleright  d_{V,N}f\big)\big((\sigma^{V^*}_X )^{-1}\triangleright \id_M\big).
\end{split}
\end{align}
Define also
$$\widehat{\gamma}^X_{V,M,N}: \Hom_\ca(X, \uhom(M, F(V\blacktriangleright N ))) \to \Hom_\ca(X, \uhom(V^*\blacktriangleright M, F(N))),$$
as the unique map such that
 \begin{align}\label{preb-gamahat}\begin{split}
&\phi^X_{ V^*\blacktriangleright M, F( N )}(\widehat{\gamma}^X_{V,M,N}(f))=  \gamma^X_{V,M,N}( \phi^X_{M, F(V\blacktriangleright N )}(f) ).
\end{split}
\end{align} 

Finally, define
$$\beta^V_{M,N}:\uhom(M,  F(V\blacktriangleright  N))\to  \uhom(V^*\blacktriangleright M , F(N)) $$
as 
\begin{equation}\label{prebalancing-V} 
\beta^V_{M,N}=\widehat{\gamma}^{\uhom(M,  F(V\blacktriangleright  N))}_{V,M,N}(\id).
\end{equation}

\begin{rmk} Naturality of $\gamma$ implies that prebalancing $\widehat{\gamma}$ and $\beta$ are related according to Remark
\ref{rmk:prebalancing}, that is, for any $f\in \Hom_\ca(X, \uhom(M, F(V\blacktriangleright N )))$
\begin{equation}\label{relat-2-prebal} \widehat{\gamma}^X_{V,M,N}(f)= \beta^V_{M,N} \circ f.
\end{equation}
\end{rmk}

We define the functor
\begin{align}\label{DEF:adj-rho}\begin{split}\bar{\rho}:&\End_\vc(\Mo)\to \ca,\\
\bar{\rho}(F,d)&=\oint_{M\in \Mo} \big(\uhom( M,F(M)),\beta \big).
\end{split}
\end{align} 
We shall denote by
$\lambda^F_M:\bar{\rho}(F,d) \to \uhom( M,F(M))$
the dinatural transformations associated with this end. Let us explain how to define $\bar{\rho}$ on morphisms. 
\begin{lema}\label{barho-on-morphs} Let $(F,d),  (\widetilde F, \widetilde d)\in \End_\vc(\Mo)$, and $\alpha:(F,d)\to  (\widetilde F, \widetilde d)$ be a natural module transformation. Then $\bar{\rho}(\alpha): \bar{\rho}(F,d) \to \bar{\rho}(\widetilde F, \widetilde d)$ is the unique morphism such that
\begin{equation}\label{barho-on-morphs1} \uhom(\id_M,\alpha_M) \lambda^F_M=\lambda^{\widetilde F}_M \circ \bar{\rho}(\alpha),
\end{equation}
for any $M\in  \Mo$. Uniqueness implies that $\bar{\rho}$ is  functorial.
\end{lema}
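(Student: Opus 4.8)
The plan is to construct $\bar{\rho}(\alpha)$ via the universal property of the relative end $\bar{\rho}(\widetilde F,\widetilde d)=\oint_{M\in\Mo}(\uhom(M,\widetilde F(M)),\widetilde\beta)$, applying item~(1) of Proposition~\ref{results-on-coend}. Concretely, I would consider the natural transformation
$$
\Gamma:\uhom(-,F(-))\to \uhom(-,\widetilde F(-)), \qquad \Gamma_{(M,N)}=\uhom(\id_M,\alpha_N),
$$
where $\alpha_N:F(N)\to\widetilde F(N)$ is the component of the module natural transformation $\alpha$. Naturality of $\Gamma$ in both variables is immediate from the bifunctoriality of $\uhom$ and the naturality of $\alpha$. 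To invoke Proposition~\ref{results-on-coend}(1) I must verify the compatibility condition \eqref{gamma-ind} with the two prebalancings: that
$$
\widetilde\beta^{\,V}_{M,N}\circ\Gamma_{(M,V\blacktriangleright N)}=\Gamma_{(V^*\blacktriangleright M,N)}\circ\beta^{V}_{M,N}
$$
for all $V\in\vc$, $M,N\in\Mo$. Once this holds, the Proposition produces a unique morphism $\widehat{\Gamma}:\bar{\rho}(F,d)\to\bar{\rho}(\widetilde F,\widetilde d)$ with $\lambda^{\widetilde F}_M\circ\widehat\Gamma=\Gamma_{(M,M)}\circ\lambda^F_M=\uhom(\id_M,\alpha_M)\,\lambda^F_M$; we then set $\bar{\rho}(\alpha):=\widehat\Gamma$, which is exactly \eqref{barho-on-morphs1}, and uniqueness is part of the Proposition's statement.

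The main work is therefore checking the prebalancing compatibility, and this is where I expect the only real obstacle to lie. The prebalancing $\beta$ was defined in \eqref{prebalancing-V} by transporting $\gamma^X_{V,M,N}$ through the internal-Hom adjunction isomorphisms $\phi,\psi$, so it is most efficient to verify the identity one level down, at the level of the Hom-space prebalancings $\gamma$. That is, I would show
$$
\widetilde\gamma^{\,X}_{V,M,N}\big(\alpha_{V\blacktriangleright N}\circ f\big)=\alpha_N\circ\gamma^{X}_{V,M,N}(f)
$$
for $f\in\Hom_\Mo(X\triangleright M,F(V\blacktriangleright N))$, which unwinds using the explicit formula \eqref{def-gamma} for $\gamma$: the only $F$-dependent ingredient in $\gamma^X_{V,M,N}(f)$ is the occurrence of the module structure $d_{V,N}$, and the compatibility reduces to the identity $\widetilde d_{V,N}\circ\alpha_{V\blacktriangleright N}=(\id_{G(V)}\triangleright\alpha_N)\circ d_{V,N}$, which is precisely the statement \eqref{modfunctor3} that $\alpha$ is a natural \emph{module} transformation between $(F,d)$ and $(\widetilde F,\widetilde d)$. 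The remaining factors $\ev_{G(V)}\triangleright\id$, $(\sigma^{V^*}_X)^{-1}\triangleright\id_M$, and the counit/unit morphisms of the $\uhom$ adjunction are independent of $F$ and commute past $\alpha$ by naturality, so the identity propagates. Transporting back along $\phi$ via \eqref{preb-gamahat} and \eqref{relat-2-prebal}, together with the naturality identities \eqref{phi-2}, \eqref{psi-2}, then yields \eqref{gamma-ind} for $\beta$ versus $\widetilde\beta$.

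Finally, functoriality of $\bar{\rho}$: given $\alpha:(F,d)\to(\widetilde F,\widetilde d)$ and $\alpha':(\widetilde F,\widetilde d)\to(\widehat F,\widehat d)$, both $\bar{\rho}(\alpha')\bar{\rho}(\alpha)$ and $\bar{\rho}(\alpha'\alpha)$ satisfy the defining equation \eqref{barho-on-morphs1} for the composite $\alpha'\alpha$ — for the former one chases $\lambda^{\widehat F}_M\bar{\rho}(\alpha')\bar{\rho}(\alpha)=\uhom(\id_M,\alpha'_M)\lambda^{\widetilde F}_M\bar{\rho}(\alpha)=\uhom(\id_M,\alpha'_M)\uhom(\id_M,\alpha_M)\lambda^F_M=\uhom(\id_M,(\alpha'\alpha)_M)\lambda^F_M$ — so they agree by the uniqueness clause of the universal property; similarly $\bar{\rho}(\id_{(F,d)})$ and $\id_{\bar{\rho}(F,d)}$ both satisfy the equation for $\id$, hence coincide. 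This establishes that $\bar{\rho}$ is a well-defined functor and completes the proof of the Lemma.
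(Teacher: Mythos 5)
Your proof is correct and follows exactly the paper's route: define the natural transformation $\uhom(\id_M,\alpha_N)$, verify the prebalancing compatibility \eqref{gamma-ind} via the module naturality condition \eqref{modfunctor3}, and invoke Proposition~\ref{results-on-coend}(1). The paper leaves the compatibility check and the functoriality observation as brief remarks, whereas you spell out the reduction to the Hom-space prebalancings $\gamma$ and the transport back through $\phi$ — useful extra detail, but the same argument.
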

\pf Naturality of $\alpha$ implies that there is a natural transformation
$$\gamma_{M,N}=\uhom(\id_M,\alpha_N): \uhom(M,F(N))\to \uhom(M,\widetilde F(N)),$$
$M, N\in  \Mo$. Since $\alpha$ is a natural $\vc$-module transformation, it follows from \eqref{modfunctor3} that
$\widetilde{\beta}^V_{M,N} \gamma_{(M,V\blacktriangleright N)}=\gamma_{(V^*\blacktriangleright M,N)} \beta^V_{M,N},$
for any $m, N\in \Mo$, $V\in\vc$. Here $\beta$ is the prebalancing \eqref{preb-gamahat} for the functor $F$ and $\widetilde \beta$ is the prebalancing \eqref{preb-gamahat} for the functor $\widetilde F$. Using Proposition \ref{results-on-coend} (1) it follows that,  there exists a unique morphism 
$\bar{\rho}(\alpha): \bar{\rho}(F,d) \to \bar{\rho}(\widetilde F, \widetilde d)$ with the desired property.
\epf

\begin{teo}\label{adjoint-structure-end} Let $\vc$ be a braided tensor category, $\ca$ a $\vc$-central tensor category via the right exact functor $G:\vc\to \Zc(\ca)$. Let $\Mo$ be a $\ca$-module category. The following assertions hold.

\begin{itemize}
\item[1.] The functor $\bar{\rho}: \End_\vc(\Mo)\to \ca$  
is well-defined, and it is a right adjoint of the functor $\rho:\ca\to  \End_\vc(\Mo)$. 
\item[2.]   The functor $\bar{\rho}$ is exact.

\end{itemize}
\end{teo}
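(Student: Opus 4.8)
The plan is to establish the adjunction $\rho \dashv \bar\rho$ by producing a natural isomorphism
$$
\Hom_{\End_\vc(\Mo)}(\rho(X),(F,d)) \;\simeq\; \Hom_\ca(X,\bar\rho(F,d))
$$
and then deducing exactness of $\bar\rho$ formally. First I would unwind the left-hand side: since $\rho(X) = X\triangleright -$ with the $\vc$-module structure $d^X$ from Lemma \ref{rho-in-V-modfunct}(1), item (2) of Proposition \ref{results-on-coend} gives
$$
\Hom_{\End_\vc(\Mo)}(\rho(X),(F,d)) \;=\; \Nat_{\!m}(\rho(X),F) \;\simeq\; \oint_{M\in\Mo}\big(\Hom_\Mo(X\triangleright M, F(M)),\, \gamma^X\big),
$$
where $\gamma^X$ is the prebalancing of \eqref{beta-for-hom} for the pair $(\rho(X),F)$. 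The key computation here is to check that this $\gamma^X$ agrees with the prebalancing \eqref{def-gamma} I want to use; this amounts to inserting the explicit module structure $d^X_{V,M} = (\sigma_{G(V),X})^{-1}\triangleright\id_M$ into formula \eqref{beta-for-hom} and matching terms, which is routine but is exactly where the half-braiding $\sigma_{G(V),X}$ of the central functor $G$ enters.

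Next I would transport this end across the internal-Hom adjunction. The natural isomorphisms $\phi^X_{M,N}$, $\psi^X_{M,N}$ of \eqref{Hom-interno} give a natural isomorphism $\Hom_\Mo(X\triangleright -,F(-)) \simeq \Hom_\ca(X,\uhom(-,F(-)))$, and by construction \eqref{preb-gamahat} the prebalancing $\widehat{\gamma}^X$ is the transport of $\gamma^X$ along $\phi^X$; hence Proposition \ref{results-on-coend}(1) yields
$$
\oint_{M\in\Mo}\big(\Hom_\Mo(X\triangleright M, F(M)),\gamma^X\big)\;\simeq\;\oint_{M\in\Mo}\big(\Hom_\ca(X,\uhom(M,F(M))),\widehat{\gamma}^X\big).
$$
By Remark \ref{rmk:prebalancing} and \eqref{relat-2-prebal}, the prebalancing $\widehat\gamma^X$ on $\Hom_\ca(X,\uhom(-,F(-)))$ is precisely the canonical one induced by postcomposition with $\beta$, so Proposition \ref{results-on-coend}(3) with $U = X$ and $S = \uhom(-,F(-))$ (equipped with $\beta$) applies and gives
$$
\oint_{M\in\Mo}\big(\Hom_\ca(X,\uhom(M,F(M))),\widehat{\gamma}^X\big)\;\simeq\;\Hom_\ca\Big(X,\oint_{M\in\Mo}(\uhom(-,F(-)),\beta)\Big)\;=\;\Hom_\ca(X,\bar\rho(F,d)).
$$
Composing the three isomorphisms gives the desired adjunction isomorphism; I would then check naturality in both $X$ and $(F,d)$, the latter using Lemma \ref{barho-on-morphs} (compatibility of $\bar\rho(\alpha)$ with the dinatural transformations $\lambda^F_M$) and naturality of $\phi$, $\psi$. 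This also shows $\bar\rho$ is well-defined as stated, i.e. the relative end exists: indeed, by the last clause of Proposition \ref{results-on-coend}(3), existence of $\oint_{M}\Hom_\ca(U,\uhom(-,F(-)))$ for all $U$ — which follows from representability, since that end computes $\Hom$ into a would-be object — guarantees $\oint_M(\uhom(-,F(-)),\beta)$ exists; alternatively one argues the relevant functors are left exact on a finite category and invokes \cite{BM}.

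For part (2), exactness of $\bar\rho$: a right adjoint is automatically left exact, so it remains to show $\bar\rho$ is right exact, equivalently that $\rho$ preserves projectives (its left adjoint being right exact — in fact exact — forces the right adjoint to preserve injectives, and in a finite tensor category projectives and injectives coincide, so one gets the statement dually; more directly, one can use that $\rho$ itself has the explicit left adjoint coming from the co-end description and argue as in Shimizu). Concretely I would invoke Proposition \ref{results-on-coend}(4): writing $\bar\rho(F,d) = \oint_M(\uhom(-,F(-)),\beta)$ and noting that for fixed $F$ the assignment $F\mapsto\uhom(M,F(M))$ is exact in $F$ (internal Hom is exact in the second variable, $F$ is exact as a module functor on an exact module category), exactness of the end in $F$ follows because $\oint$ commutes with the left-exact functors $\Hom_\ca(U,-)$ by item (3) and one checks the short exact sequence condition pointwise. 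The main obstacle I anticipate is the first matching step — verifying that the prebalancing \eqref{beta-for-hom} specialized to $(\rho(X),F)$ really is \eqref{def-gamma} — since this is where one must be careful with the direction of the half-braiding $\sigma_{G(V),X}$ versus $\sigma_{X,G(V)}$, the duality morphisms $\ev_{G(V)}$, and the module associativity constraints $m$; everything else is a formal composition of known natural isomorphisms.
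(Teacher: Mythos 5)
Your proof of part (1) follows the paper's argument step for step: you identify $\Nat_\vc(\rho(X),F)$ with the relative end of $\Hom_\Mo(X\triangleright -,F(-))$ via Proposition \ref{results-on-coend}(2), transport this across the internal-Hom adjunction $\phi^X,\psi^X$ using item (1) together with the defining relation \eqref{preb-gamahat} of $\widehat\gamma$, and then pull out $\Hom_\ca(X,-)$ by item (3), deducing both existence of the relative end (last clause of item (3)) and the adjunction. Your remark that the prebalancing $\gamma$ of \eqref{def-gamma} is the specialization of \eqref{beta-for-hom} to the pair of $\vc$-module functors $(\rho(X),F)$ is exactly the observation the paper makes when it introduces these prebalancings, so this part is correct and essentially identical.

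Your sketch of part (2), however, contains a genuine gap. The equivalence ``$\bar\rho$ right exact $\Leftrightarrow$ $\rho$ preserves projectives'' is correct, but what you then deduce from exactness of $\rho$ is that $\bar\rho$ preserves injectives, hence (projectives $=$ injectives in a finite tensor category) that $\bar\rho$ preserves projectives. That is a statement about $\bar\rho$, not about $\rho$, and the ``dual'' you invoke does not carry you from the one to the other: nothing you have said shows that $\rho$ sends projectives of $\ca$ to projectives of $\End_\vc(\Mo)$. Your fallback argument via Proposition \ref{results-on-coend}(3)/(4) is also insufficient: commuting the relative end with the left-exact functors $\Hom_\ca(U,-)$ holds automatically for any limit and only recovers left-exactness of $\bar\rho$, which you already know because it is a right adjoint; it cannot detect right-exactness. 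The paper itself does not supply an argument here beyond citing \cite[Theorem 6.4]{M} ``mutatis mutandis,'' so a self-contained proof of part (2) would require reproducing that argument rather than the adjoint-functor formalities you propose.
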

\pf 1. Without  loss of generality, we shall assume that $\Mo$ is a strict module category. By Lemma \ref{rho-in-V-modfunct}  (2) the functor  $\rho:\ca\to  \End_\vc(\Mo)$ is exact and it has a right adjoint.  For any $X\in\ca$ there are natural isomorphisms
\begin{align*}
\Hom_\ca(X, \rho^{\ra}(F))&\simeq \Nat_\vc(\rho(X), F) \simeq \oint_{M\in \Mo} \big( \Hom_{\Mo}(X\triangleright M,F( M)), \gamma\big).
\end{align*}
The second equivalence follows from Proposition \ref{results-on-coend} (2). \eq
By the definition of the prebalancing  $\widehat{\gamma}$, given in \eqref{preb-gamahat}, the  diagram   
$$\xymatrix{\!\!\!\!\Hom_\ca(X, \uhom(M, F(V\blacktriangleright N ))) \ar[d]_{\widehat{\gamma}^X_{V,M,N}}\ar[rr]^{\quad\phi^X_{M, F(V\blacktriangleright N )}\!\!\!\!}&& \Hom_\Mo(X\triangleright M, F(V\blacktriangleright N ))
 \ar[d]^{\gamma^X_{V,M,N}} \\
\Hom_\ca(X, \uhom(V^*\blacktriangleright M, F( N )))\ar[rr]_{\phi^X_{V^*\blacktriangleright M, F( N )} }&& \Hom_{\Mo}(X\ot V^*\blacktriangleright M,F(N)) ,}$$
is commutative. It follows from Proposition \ref{results-on-coend} (1)  that there exists an isomorphism 
$$\phi^X:\oint_{M\in \Mo} \Hom_\ca(X, \uhom(M, F(M))\to  \oint_{M\in \Mo}  \Hom_{\Mo}(X\triangleright M,F( M)).$$
It follows from Proposition \ref{results-on-coend} (3) that, there is an isomorphism
$$\oint_{M\in \Mo} \Hom_\ca(X, \uhom(M, F(M))\simeq  \Hom_\ca\big(X, \oint_{M\in \Mo} \uhom(M, F(M))\big). $$
This implies that the functor $\bar{\rho}: \End_\vc(\Mo)\to \ca$ is well-defined and it is a right adjoint of the functor $\rho:\ca\to  \End_\vc(\Mo)$. 

\medbreak

2. The proof of exactness  of $\bar{\rho}$ follows\textit{ mutatis mutandis} the proof of \cite[Theorem 6.4]{M}.
\epf

Recall that we are denoting by 
$$\lambda^F_M:\bar{\rho}(F,d) \to \uhom( M,F(M))$$
the dinatural transformations of the relative end $\bar{\rho}(F,d)=\oint_{M\in \Mo} \big(\uhom( M,F(M)),\beta \big).$ The proof of Theorem \ref{adjoint-structure-end} implies directly the next result.
\begin{cor}\label{unit-counit} The unit and counit of the adjunction $(\rho, \bar{\rho})$ are determined by
$$ h:\Id\to \bar{\rho}\circ \rho,\quad \lambda^{\rho(X)}_M h_X=\psi^X_{M,X \triangleright M}(\id),$$
$$e: \rho\circ  \bar{\rho} \to \Id,\quad 
(e_{(F,d)})_M=\phi^{\uhom(M,F(M))}_{M,F(M)}(\id)(\lambda^F_M \triangleright\id_M),$$
for any $X\in \ca, M\in\Mo, (F,d)\in \End_\vc(\Mo)$.\qed
\end{cor}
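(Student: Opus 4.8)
The plan is to trace identity morphisms through the three natural isomorphisms out of which the adjunction bijection in the proof of Theorem \ref{adjoint-structure-end} is assembled, using the standard fact that the unit and counit of an adjunction are the images of appropriate identities under the hom-set bijection. Recall that, for $X\in\ca$ and $(F,d)\in\End_\vc(\Mo)$, that bijection $\Hom_\ca(X,\bar\rho(F))\cong\Nat_\vc(\rho(X),F)$ is the composite of: (a) the isomorphism $\Hom_\ca(X,\bar\rho(F))\cong\oint_{M\in\Mo}\Hom_\ca(X,\uhom(M,F(M)))$ from Proposition \ref{results-on-coend} (3) (the prebalancing on the right being the canonical one described in Remark \ref{rmk:prebalancing}), which sends $g\colon X\to\bar\rho(F)$ to the element of the end whose $M$-th dinatural leg is $\lambda^F_M\,g$; (b) the isomorphism $\phi^X\colon\oint_{M}\Hom_\ca(X,\uhom(M,F(M)))\to\oint_{M}\Hom_\Mo(X\triangleright M,F(M))$ obtained from Proposition \ref{results-on-coend} (1) applied to the natural family $\phi^X_{M,N}$ (its hypothesis \eqref{gamma-ind} being exactly the commuting square relating $\phi^X$, $\widehat\gamma$ and $\gamma$ in the proof of Theorem \ref{adjoint-structure-end}), which sends an end element with legs $(a_M)_M$ to the one with legs $(\phi^X_{M,F(M)}(a_M))_M$ and whose inverse is induced by the family $\psi^X_{M,N}$; and (c) the isomorphism $\oint_{M}\Hom_\Mo(X\triangleright M,F(M))\cong\Nat_\vc(\rho(X),F)$ from Proposition \ref{results-on-coend} (2), which sends an end element with legs $(b_M)_M$ to the natural $\vc$-module transformation with $M$-component $b_M$.

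For the unit, I would set $F=\rho(X)$ and carry $\id_{\rho(X)}\in\Nat_\vc(\rho(X),\rho(X))$ backwards along (c), (b), (a). Under the inverse of (c) it becomes the end element with legs $(\id_{X\triangleright M})_M$; under $(\phi^X)^{-1}$ it becomes the end element with legs $\big(\psi^X_{M,X\triangleright M}(\id)\big)_M$; and under the inverse of (a) it becomes the unique morphism $h_X\colon X\to\bar\rho(\rho(X))$ with $\lambda^{\rho(X)}_M\,h_X=\psi^X_{M,X\triangleright M}(\id)$, which is the asserted description.

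For the counit, I would set $X=\bar\rho(F)$ and carry $\id_{\bar\rho(F)}$ forward along (a), (b), (c). Under (a) it becomes the end element with legs $(\lambda^F_M)_M$; under $\phi^X$ it becomes the end element with legs $\big(\phi^{\bar\rho(F)}_{M,F(M)}(\lambda^F_M)\big)_M$; and invoking the second identity in \eqref{phi-2} (naturality of $\phi$ in its superscript variable) with $\alpha=\id_{\uhom(M,F(M))}$ and $h=\lambda^F_M$, this leg rewrites as $\phi^{\uhom(M,F(M))}_{M,F(M)}(\id)\,(\lambda^F_M\triangleright\id_M)$. Applying (c) then identifies the resulting end element with the natural $\vc$-module transformation $e_{(F,d)}$ whose $M$-component is that composite, as asserted.

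The whole thing is a bookkeeping exercise, and I do not expect a serious obstacle; the one point requiring real care is to invoke each of (a), (b), (c) with the correct prebalancing and with the correct description of its effect on dinatural legs, which comes down to citing Remark \ref{rmk:prebalancing} for (a) and the commuting square from the proof of Theorem \ref{adjoint-structure-end} for (b). No computation beyond those already present in that proof is needed.
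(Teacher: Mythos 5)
Your proposal is correct and is precisely what the paper leaves implicit when it says the corollary "follows directly" from the proof of Theorem \ref{adjoint-structure-end}: you trace the identities through the three isomorphisms (a)--(c) assembled there, using Remark \ref{rmk:prebalancing} and the naturality of $\phi$ in its superscript variable exactly as required. This is the same approach, just spelled out.
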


Since $\rho: \ca\to  \End_\vc(\Mo)$ is a $\ca$-bimodule functor, then $\bar{\rho}: \End_\vc(\Mo)\to \ca$  is also a $\ca$-bimodule functor. Thus we can consider the functor
$$\Zc_\ca(\bar{\rho}):\Zc_\ca(\End_\vc(\Mo)) \to \Zc(\ca), $$
(see Section \ref{subsection:relative center}). Let us explicitly describe the category $\Zc_\ca(\End_\vc(\Mo))$. An object in the category $\Zc_\ca(\End_\vc(\Mo))$ is a triple $(F,c,d)$, where 
\begin{itemize}
\item $(F,d):\Mo\to \Mo$ is a $\vc$-module functor;

\item a half-braiding $c_X: F \triangleleft X\to X \triangleright F$, is a collection of natural isomorphisms
$$c_{X,M}: F(X \triangleright M)\to X \triangleright F(M),$$
such that $(F,c)$ is a $\ca$-module functor.
\end{itemize}    

For any $X\in \ca$, the natural isomorphism $c_X$ has to be a morphism in $\End_\vc(\Mo)$, Equation \eqref{modfunctor3} establishes the following compatibility between $c$ and $d$, for any $V\in \vc$, $M\in \Mo$
\begin{equation}\label{compatib-c-d} 
\big( \id_X\triangleright d_{V,M}\big) c_{X, V\blacktriangleright M}F(\sigma_{X,V}\triangleright \id_M)=\big( \sigma_{X,G(V)}\triangleright\id_M \big)\big( \id_{V}\blacktriangleright c_{X,M}\big) d_{V,X\triangleright M} .
\end{equation}


For future reference, for any $(F, d) \in \text{End}_{\mathcal{V}}(\mathcal{M})$ and $X \in \mathcal{C}$, the $\mathcal{C}$-bimodule structure of the functor $\bar{\rho}$ is given by
$$r_{(F,d), X}: \bar{\rho}((F,d)\triangleleft X) \to \bar{\rho}(F,d) \ot X, \ \ l_{X, (F,d)}: \bar{\rho}(X\triangleright (F,d)) \to X\ot \bar{\rho}(F,d) ,$$
the  right and left  $\ca$-module structures. It follows from Lemma \ref{modfunct-adjoint} that
$$r^{-1}_{(F,d), X}= \bar{\rho}(e_{(F,d)} \triangleleft \id_X) h_{ \bar{\rho}(F,d)\ot X}, \ \ l^{-1}_{X, (F,d)}=\bar{\rho}(\id_X  \triangleright e_{(F,d)} )h_{X\ot \bar{\rho}(F,d) }. $$

\begin{lema}\label{l,r,a,b}
    For any $(F,d)\in \End_\vc(\Mo)$, $X\in \ca$ and $M \in \Mo$ we have
\begin{equation}
    \lambda^{X \triangleright F}_M l^{-1}_{X,(F,d)}= \mathfrak{a}^{-1}_{X,M,F(M)} (\id_X \ot \lambda^{F}_M), \ \ 
    \lambda^{F \triangleleft X}_M r^{-1}_{(F,d),X}= \mathfrak{b}_{X,M,F(X \triangleright M)}(\lambda^F_{X \triangleright M} \ot \id_X).
\end{equation}
\end{lema}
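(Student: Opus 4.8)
The plan is to prove both identities by the usual strategy for characterizing a morphism into a relative end: show that the right-hand side, when postcomposed with the appropriate dinatural transformation, satisfies the dinaturality condition defining the end, and then verify the defining equation by unwinding the adjunctions. For the left identity, I would use that $\bar\rho(X\triangleright(F,d)) = \oint_{M\in\Mo}\uhom(M,(X\triangleright F)(M)) = \oint_{M\in\Mo}\uhom(M,X\triangleright F(M))$, and that by Lemma~\ref{modfunct-adjoint}(i) the left module structure is $l^{-1}_{X,(F,d)}=\bar\rho(\id_X\triangleright e_{(F,d)})h_{X\ot\bar\rho(F,d)}$. Composing with $\lambda^{X\triangleright F}_M$ and using the defining property \eqref{barho-on-morphs1} of $\bar\rho$ on morphisms together with the formula for the unit $h$ from Corollary~\ref{unit-counit}, I expect the expression to collapse — after applying $\phi$/$\psi$ adjunction identities \eqref{phi-2}, \eqref{psi-2} — to exactly the composite $\mathfrak{a}^{-1}_{X,M,F(M)}(\id_X\ot\lambda^F_M)$, using the explicit description of $\mathfrak a^{-1}$ in terms of $\underline{coev}$ and $\underline{ev}$ given right after Lemma~\ref{comp, a and b}.

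For the right identity I would proceed dually, using Lemma~\ref{modfunct-adjoint}(ii): $r^{-1}_{(F,d),X}=\bar\rho(e_{(F,d)}\triangleleft\id_X)h_{\bar\rho(F,d)\ot X}$, and the fact that $\bar\rho((F,d)\triangleleft X)=\oint_{M\in\Mo}\uhom(M,(F\triangleleft X)(M))=\oint_{M\in\Mo}\uhom(M,F(X\triangleright M))$, whose dinatural transformation is $\lambda^{F\triangleleft X}_M$. The key move is to match the composite against the definition of $\mathfrak b_{X,M,F(X\triangleright M)}$ as $\uhom(\id_M,\underline{ev}_{X\triangleright M,\cdot}\,m)\underline{coev}$: after substituting the counit formula $(e_{(F,d)})_M=\phi^{\uhom(M,F(M))}_{M,F(M)}(\id)(\lambda^F_M\triangleright\id_M)$ from Corollary~\ref{unit-counit} and the unit $h$, one should again be able to use \eqref{barho-on-morphs1} and the naturality squares \eqref{phi-2}, \eqref{psi-2} of the internal Hom to reduce everything to $\mathfrak b_{X,M,F(X\triangleright M)}(\lambda^F_{X\triangleright M}\ot\id_X)$. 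Here one should keep in mind that $\lambda^F$ is evaluated at $X\triangleright M$ rather than at $M$, which is where the $\triangleleft X$-twist of the module functor enters.

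In both cases I would make free use of the observation that the $\mathfrak a$ and $\mathfrak b$ morphisms are precisely the ones Shimizu introduced (cited before Lemma~\ref{comp, a and b}), so the computation is formally the same as the one underlying \cite[Lemma A.3]{Sh2}, just carried out inside the relative end rather than the usual end; since the prebalancing $\beta$ plays no role in these particular identities (they are statements about the module structures $l,r$ and the internal Hom, not about dinaturality with respect to $\vc$), one may essentially transcribe the non-braided argument. Concretely, I expect each identity to follow by checking that both sides, composed with $\lambda$, agree — for the left identity this is immediate from the adjunction formula for the module structure of a right adjoint in Lemma~\ref{modfunct-adjoint}(i) and the explicit form of the unit, and similarly on the right — so the proof is a diagram chase of two or three steps.

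The main obstacle I anticipate is bookkeeping: correctly tracking the interplay between the module-functor structure maps ($c$, $d$), the associativity constraint $m$ of $\Mo$, and the units/counits through the adjunction $\phi\dashv\psi$, so that the $\underline{coev}$/$\underline{ev}$ appearing in the definitions of $\mathfrak a^{-1}$ and $\mathfrak b$ line up exactly with those coming from the formulas for $h$ and $e_{(F,d)}$. Strictness of $\Mo$ (assumed in the proof of Theorem~\ref{adjoint-structure-end}) will help suppress the associators, but one still has to be careful that the composite $\bar\rho(\id_X\triangleright e_{(F,d)})$ is interpreted via \eqref{barho-on-morphs1} with the correct natural transformation $\uhom(\id_M,(\id_X\triangleright e_{(F,d)})_M)$; once that identification is made the verification should be routine.
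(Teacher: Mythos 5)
Your proposal is correct and follows essentially the same route as the paper, which simply cites Shimizu's Lemma 3.7 and transfers the argument to the relative end. You correctly identify the key ingredients — the formulas $l^{-1}_{X,(F,d)}=\bar\rho(\id_X\triangleright e_{(F,d)})h_{X\ot\bar\rho(F,d)}$ and $r^{-1}_{(F,d),X}=\bar\rho(e_{(F,d)}\triangleleft\id_X)h_{\bar\rho(F,d)\ot X}$ from Lemma~\ref{modfunct-adjoint}, the defining property \eqref{barho-on-morphs1} of $\bar\rho$ on morphisms, the explicit unit/counit of Corollary~\ref{unit-counit}, and naturality of $\underline{coev}$ — and you are right that the prebalancing $\beta$ does not enter, so the computation is formally identical to Shimizu's.
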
 
\pf It follows using the same steps as in the proof of \cite[Lemma 3.7]{Sh2}. 
\epf

We shall denote by $\ca(\vc,\Mo)$ the full subcategory of objects $(F,c,d)\in \Zc_\ca(\End_\vc(\Mo))$ such that $c_{G(V),M}=d_{V,M}$, for any $V\in \vc$, $M\in \Mo$.

\begin{lema}\label{dual-equiv-2s} The   category $\ca(\vc,\Mo)$ is a monoidal subcategory of the relative center $\Zc_\ca(\End_\vc(\Mo))$. The functor $H:\ca^*_\Mo\to \ca(\vc,\Mo), H(F,c)=(F,c,c\mid_\vc)$, is an equivalence of tensor categories. Here
$$(c\mid_{\vc})_{V,M}=c_{G(V),M}, \ \ \text{ for any }V\in \vc, M\in\Mo.$$ 
\end{lema}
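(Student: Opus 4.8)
The plan is to verify the three assertions of Lemma \ref{dual-equiv-2s} in turn, relying on the description of $\Zc_\ca(\End_\vc(\Mo))$ given just above and on the known equivalence $\ca^*_\Mo \simeq \Zc_\ca(\Rex(\Mo,\Mo))$ from Example \ref{relat-center-rex}.

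\medbreak
\textbf{Step 1: $\ca(\vc,\Mo)$ is closed under tensor products.} Recall that in $\Zc_\ca(\End_\vc(\Mo))$ the tensor product of $(F,c,d)$ and $(F',c',d')$ is the composition $F\circ F'$ with half-braiding built from $c$ and $c'$ in the usual way, $(cc')_{X,M} = c_{X,F'(M)}\, F(c'_{X,M})$, and $\vc$-module structure $(dd')_{V,M}=d_{V,F'(M)}\,F(d'_{V,M})$. If $(F,c,d),(F',c',d')\in \ca(\vc,\Mo)$, i.e. $c_{G(V),M}=d_{V,M}$ and $c'_{G(V),M}=d'_{V,M}$, then evaluating the composite half-braiding at $X=G(V)$ gives $(cc')_{G(V),M}=c_{G(V),F'(M)}\,F(c'_{G(V),M}) = d_{V,F'(M)}\,F(d'_{V,M})=(dd')_{V,M}$, so the product again lies in $\ca(\vc,\Mo)$. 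The unit object $(\Id_\Mo,\id,\id)$ trivially satisfies the condition. Hence $\ca(\vc,\Mo)$ is a monoidal subcategory, and since it is a full subcategory it inherits the tensor structure verbatim.

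\medbreak
\textbf{Step 2: $H$ is well-defined and fully faithful.} Given $(F,c)\in\ca^*_\Mo=\End_\ca(\Mo)$, the data $(F,c,c\mid_\vc)$ is a triple in which $(F,c)$ is a $\ca$-module functor and $(F,c\mid_\vc)$ is its restriction along $G$; one checks that $c\mid_\vc$ is a $\vc$-module structure because the $\vc$-module category structure on $\Mo$ is by definition $V\blacktriangleright M = G(V)\triangleright M$ with associativity constraints inherited from those of $\Mo$, so the hexagon axiom \eqref{modfunctor1} for $c$ restricted to the objects $G(V)$ is exactly the hexagon for $c\mid_\vc$. Compatibility \eqref{compatib-c-d} between $c$ and $d:=c\mid_\vc$ must be verified: for $d=c\mid_\vc$ it reduces to a naturality/braiding identity, namely that $c$, being a half-braiding, intertwines $\sigma_{X,G(V)}$ correctly; this follows from the half-braiding axiom \eqref{half-braid} for $(F,c)$ together with the fact that $G(V)$ lies in $\Zc(\ca)$ and $G$ is braided. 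Thus $(F,c,c\mid_\vc)\in\Zc_\ca(\End_\vc(\Mo))$, and the defining condition $c_{G(V),M}=(c\mid_\vc)_{V,M}$ holds by construction, so the image lies in $\ca(\vc,\Mo)$. On morphisms $H$ is the identity (a module natural transformation $\theta:(F,c)\to(F',c')$ is in particular a $\vc$-module natural transformation and a half-braiding morphism), so $H$ is faithful; and it is full because any morphism $(F,c,d)\to(F',c',d')$ in $\Zc_\ca(\End_\vc(\Mo))$ is a fortiori a morphism of $\ca$-module functors.

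\medbreak
\textbf{Step 3: $H$ is essentially surjective and monoidal.} Given $(F,c,d)\in\ca(\vc,\Mo)$, the pair $(F,c)$ is an object of $\ca^*_\Mo$, and the condition $c_{G(V),M}=d_{V,M}$ says precisely $d=c\mid_\vc$, so $(F,c,d)=H(F,c)$. This gives essential surjectivity on the nose (indeed $H$ is an isomorphism of categories onto $\ca(\vc,\Mo)$). Finally, $H$ is strict monoidal: comparing the tensor product in $\ca^*_\Mo$ (composition of module functors with composite half-braiding) with that in $\Zc_\ca(\End_\vc(\Mo))$ computed in Step 1, we see $H$ sends $(F,c)\circ(F',c')$ to $(F F', cc', (cc')\mid_\vc)$ and $(cc')\mid_\vc = (c\mid_\vc)(c'\mid_\vc)$ by the computation in Step 1, which is the tensor product of $H(F,c)$ and $H(F',c')$; the unit is preserved strictly. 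Therefore $H$ is an equivalence (in fact an isomorphism) of tensor categories.

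\medbreak
The main obstacle I anticipate is Step 2, specifically checking the compatibility condition \eqref{compatib-c-d} when $d=c\mid_\vc$: this requires carefully combining the half-braiding axiom for $c$ on the objects $G(V)$ with the hexagon \eqref{braiding1} for the braiding $\sigma$ of $\Zc(\ca)$ and the fact that $G$ is a \emph{braided} functor, so that the two occurrences of $\sigma$ in \eqref{compatib-c-d} (namely $\sigma_{X,V}$ inside $F$ and $\sigma_{X,G(V)}$ outside) match up. The rest is largely formal bookkeeping with the definitions of the tensor structures on the two centers.
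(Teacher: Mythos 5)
Your proposal is correct and follows essentially the same route as the paper: verify closure of $\ca(\vc,\Mo)$ under the tensor product and then observe that the forgetful functor $(F,c,d)\mapsto(F,c)$ is a monoidal quasi-inverse of $H$ (indeed a strict inverse), which is the content of your Steps 2 and 3. The only minor over-statement is at the end of Step 2: the compatibility \eqref{compatib-c-d} with $d=c\mid_\vc$ reduces, after applying the hexagon for $c$ on both sides, to the \emph{naturality} of $c_{-,M}$ in the $\ca$-variable applied to the morphism $\sigma_{G(V),X}$ — so one only needs that $G$ lands in $\Zc(\ca)$; the braidedness of $G$ is not actually used at that point.
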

\pf It follows from a straightforward calculation that tensor in $\ca(\vc,\Mo)$  is closed. The forgetful functor 
$\Zc_\ca(\End_\vc(\Mo))\to  \ca^*_\Mo$, when restricted to $\ca(\vc,\Mo)$ is a monoidal quasi-inverse of $H$.
\epf

\begin{exa}\begin{itemize}
\item[1.] If $\vc=\vect_\ku$, then  $\Zc_\ca(\End_\vc(\Mo))$ is simply the category of module endofunctors, that is $\ca^*_\Mo$. 

\item[2.]  $\ca(\vc,\ca)$ is monoidally equivalent to $\ca^{rev}$. 
\end{itemize}
\end{exa}

\begin{lema}\label{rho-relative} Let $\vc$ be a braided tensor category, $\ca$ a $\vc$-central tensor category via the right exact functor $G:\vc\to \Zc(\ca)$, and $\Mo$ a left $\ca$-module category. If $(X,\sigma)\in \Zc^\vc(\ca)$ then $\Zc_\ca(\rho)(X,\sigma)\in \ca(\vc,\Mo)$. Thus, we have a functor 
$$\Zc_\ca(\rho):\Zc^\vc(\ca)\to  \ca(\vc,\Mo). $$
\end{lema}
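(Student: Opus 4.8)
The plan is to unwind the definition of $\Zc_\ca(\rho)$ using the explicit formula \eqref{relative-half-braid} for the half-braiding of the image of a bimodule functor, and then check the two conditions that cut out $\ca(\vc,\Mo)$ inside $\Zc_\ca(\End_\vc(\Mo))$: namely that $\Zc_\ca(\rho)(X,\sigma)$ is indeed a $\vc$-module functor, and that its $\ca$-half-braiding $c$ restricted along $G$ agrees with the $\vc$-module structure $d$. First I would recall that, by Lemma~\ref{rho-in-V-modfunct}(3), $\rho:\ca\to\End_\vc(\Mo)$ is a $\ca$-bimodule functor; write $(c^\rho, d^\rho)$ for the left-module and right-module structure isomorphisms of $\rho$ (which exist by that lemma, the left one being transported from the obvious one on $\uc\circ\rho$). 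Then for $(X,\sigma)\in\Zc^\vc(\ca)\subseteq\Zc(\ca)=\Zc_\ca(\ca)$, formula \eqref{relative-half-braid} gives $\Zc_\ca(\rho)(X,\sigma)=(\rho(X),\widetilde\sigma)$ with $\widetilde\sigma_Y = c^\rho_{Y,X}\,\rho(\sigma_Y)\,(d^\rho_{X,Y})^{-1}$ for $Y\in\ca$, and the underlying $\vc$-module functor $\rho(X)=X\triangleright-$ carries the module structure $d^X_{V,M}=(\sigma_{G(V),X})^{-1}\triangleright\id_M$ described in the proof of Lemma~\ref{rho-in-V-modfunct}(1). So the triple is $(\rho(X), \widetilde\sigma, d^X)$, and what remains is to verify $\widetilde\sigma_{G(V),M}=d^X_{V,M}$ for all $V\in\vc$, $M\in\Mo$.

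The key computation is therefore to evaluate $\widetilde\sigma_{G(V),M}=\big(c^\rho_{G(V),X}\big)_M\,\rho(\sigma_{G(V)})_M\,\big((d^\rho_{X,G(V)})^{-1}\big)_M$ and show it equals $(\sigma_{G(V),X})^{-1}\triangleright\id_M$. Here $\rho(\sigma_{G(V)})_M = \sigma_{G(V),X}\triangleright\id_M$ (the $\Zc(\ca)$-half-braiding of $X$ applied at the object $G(V)\in\ca$, fed into the functor $-\triangleright M$), so the two outer factors must contribute the inverse braiding $(\sigma_{G(V),X})^{-1}\triangleright\id_M$ twice over... more precisely, one expects $c^\rho$ and $d^\rho$ to be built out of the associativity $m$ of $\Mo$ together with (for $d^\rho$, since $\rho$ lands in $\vc$-module functors and the right $\ca$-action on a $\vc$-module functor was twisted in Lemma~\ref{rho-in-V-modfunct}(2) by $(\sigma_{G(V),X})^{-1}$) exactly the braiding isotopy. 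Tracking these, the associativity isomorphisms cancel, one copy of $\sigma_{G(V),X}$ cancels against $\rho(\sigma_{G(V)})$, and the remaining twisting from the right-module structure on $\End_\vc(\Mo)$ yields precisely $(\sigma_{G(V),X})^{-1}\triangleright\id_M=d^X_{V,M}$. It is at this last cancellation that the hypothesis $(X,\sigma)\in\Zc^\vc(\ca)$ rather than merely $\Zc(\ca)$ is used: the relation $\sigma_{G(V),X}\sigma_{X,G(V)}=\id$ from \eqref{V-relative-center} is what matches the half-braiding of $X$ against the braiding $\sigma$ of $\vc$ that defines the twisted action. (One should also check the coherence \eqref{compatib-c-d} between $c=\widetilde\sigma$ and $d=d^X$ holds automatically; this again reduces to the hexagon \eqref{braiding1}/\eqref{V-relative-center} for $\sigma$ and the naturality/hexagon of the $\Zc(\ca)$-half-braiding of $X$, and to the fact that $\rho$ being a bimodule functor forces the compatibility \eqref{bimod-funct} between $c^\rho$ and $d^\rho$.)

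Finally I would check that $\Zc_\ca(\rho)$ so corestricted is functorial into $\ca(\vc,\Mo)$: this is immediate since $\Zc_\ca(\rho)$ is already a functor into $\Zc_\ca(\End_\vc(\Mo))$ and we have shown it factors through the full subcategory $\ca(\vc,\Mo)$; a morphism $f:(X,\sigma)\to(X',\sigma')$ in $\Zc^\vc(\ca)$ is in particular a morphism in $\Zc(\ca)$, hence $\Zc_\ca(\rho)(f)=\rho(f):\rho(X)\to\rho(X')$ is a morphism of $\ca$-module functors, and it is automatically a morphism of $\vc$-module functors because the $\vc$-structure $d^X$ is transported from the $\ca$-structure via $G$. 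The main obstacle is the bookkeeping in the central computation above: one must be careful to use the correct, possibly non-strict, forms of the module-functor structures $c^\rho$ and $d^\rho$ of $\rho$ — in particular the right-module structure $d^\rho$ of $\rho$ as a map into $\End_\vc(\Mo)$ involves the twisted right action from Lemma~\ref{rho-in-V-modfunct}(2), and it is essential not to conflate it with the untwisted action on $\End(\Mo)$. Working in a strict module category (as in the proof of Theorem~\ref{adjoint-structure-end}) removes the associativity clutter and isolates the braiding identities, which is how I would organize the write-up.
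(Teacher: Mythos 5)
Your approach is essentially the same as the paper's: identify the half-braiding of $\Zc_\ca(\rho)(X,\sigma)$ via \eqref{relative-half-braid}, recall the $\vc$-module structure $d^X_{V,M}=(\sigma_{G(V),X})^{-1}\triangleright\id_M$ on $\rho(X)$ from Lemma \ref{rho-in-V-modfunct}(1), and observe that the defining condition $c\mid_\vc=d$ of $\ca(\vc,\Mo)$ reduces to the double-braiding identity $\sigma_{G(V),X}\,\sigma_{X,G(V)}=\id$ from \eqref{V-relative-center}. One caveat on your bookkeeping sketch: with $\Mo$ strict, the bimodule-structure isomorphisms $c^\rho$ and $d^\rho$ of $\rho$ are already identities as natural transformations (the twist $(\sigma_{G(V),X})^{-1}$ lives in the $\vc$-module \emph{data} of $F\triangleleft X$, not in $d^\rho$ itself), so there is no copy of $\sigma_{G(V),X}$ ``cancelling against $\rho(\sigma_{G(V)})$''; the half-braiding is simply $\sigma_{X,Y}\triangleright\id_M$, and the single relation from $\Zc^\vc(\ca)$ is the only braiding identity needed.
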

\pf In principle, we have a functor $\Zc_\ca(\rho):\Zc(\ca)\to  \Zc_\ca(\End_\vc(\Mo))$. The functor $\Zc_\ca(\rho)$. The functor $\rho(X)$ has $\vc$-module structure given in Lemma \ref{rho-in-V-modfunct} (1) by
$d^X_{V,M}=(\sigma_{G(V),X})^{-1} \triangleright \id_M$. Here $\sigma_{G(V),X}: G(V)\ot X\to X\ot G(V)$ is the braiding for the object $G(V)$. Since the object $X$ belongs to $\Zc(\ca)$, it has a half-braiding
$$\tau_Y:X\ot Y\to Y\ot X.$$
The functor $\rho(X)$ has also a $\ca$-module functor structure, with isomorphisms
$$c_{Y,M}=\tau_Y\triangleright \id_M:X\ot Y\triangleright M\to Y\ot X\triangleright M,$$
for any $Y\in\ca, M\in \Mo$. If $(X,\tau)\in \Zc^\vc(\ca)$, then $\sigma_{G(V),X}\circ \tau_{G(V)}=\id$. In particular $\rho(X)\in \ca(\vc,\Mo)$.
\epf

The next  Theorem is the main result of this section. Succinctly, it states that using the formula of the functor $\bar{\rho}$, given in \eqref{DEF:adj-rho}, one can produce objects in the relative center $\Zc^\vc(\ca)$, but only when we restrict to certain module functors. It also explains why the braiding of objects $G(V)$, for $V\in \vc$, appears in the definition of the prebalancing of the relative end $\bar{\rho}(F)$.

\begin{teo}\label{in-the-centralizer}
Let $\vc$ be a braided tensor category, $\ca$ a $\vc$-central tensor category via the right exact functor $G:\vc\to \Zc(\ca)$ and $\Mo$ be a $\ca$-module category. Assume also that $(F,c):\Mo\to \Mo$ is a $\ca$-module functor, and also $(F,d):\Mo\to \Mo$ is a $\vc$-module functor. If $c\mid_\vc=d$, that is $(F,c,d)\in \ca(\vc,\Mo)$, then 
$$\Zc_\ca(\bar{\rho})(F,c,d)\in \Zc^\vc(\ca).$$
Thus, defining a functor $\Zc_\ca(\bar{\rho}): \ca(\vc,\Mo)\to \Zc^\vc(\ca).$ 
\end{teo}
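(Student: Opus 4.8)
The plan is to start from the object $\Zc_\ca(\bar\rho)(F,c,d)=(\bar\rho(F,d),\widetilde\sigma)$, which already lies in $\Zc(\ca)$ by functoriality of $\Zc_\ca(-)$ (Section \ref{subsection:relative center}), and to show that its half-braiding $\widetilde\sigma$ satisfies the extra axiom \eqref{V-relative-center} defining $\Zc^\vc(\ca)$, namely $\sigma_{G(V),\bar\rho(F,d)}\,\widetilde\sigma_{G(V)}=\id$ for every $V\in\vc$. Recall from \eqref{relative-half-braid} that $\widetilde\sigma_X=l_{X,(F,d)}\,\bar\rho(c_X)\,r^{-1}_{(F,d),X}$, where $l$ and $r$ are the left and right $\ca$-module structures of $\bar\rho$ recorded just before Lemma \ref{l,r,a,b}. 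Since $\bar\rho$ is a relative end, a morphism out of (or into) $\bar\rho(F,d)$ is determined by its compositions with the dinatural transformations $\lambda^F_M$; so the whole verification reduces to a diagram chase at the level of the objects $\uhom(M,F(M))$, $M\in\Mo$.

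The key steps, in order: \emph{(i)} Use Lemma \ref{l,r,a,b} to express $\lambda^{G(V)\triangleright F}_M\,l^{-1}_{G(V),(F,d)}$ and $\lambda^{F\triangleleft G(V)}_M\,r^{-1}_{(F,d),G(V)}$ in terms of the canonical module morphisms $\mathfrak a^{-1}$ and $\mathfrak b$ of the internal Hom. \emph{(ii)} Compose $\widetilde\sigma_{G(V)}$ with $\lambda^F_M$ and, using step (i) together with the functoriality identity \eqref{barho-on-morphs1} applied to the module natural transformation $c_{G(V)}$, rewrite $\lambda^F_M\,\widetilde\sigma_{G(V)}$ as an explicit composite of $\mathfrak a$, $\mathfrak b$, $\uhom(\id_M,c_{G(V),M})$ and $\underline{ev}$, $\underline{coev}$. \emph{(iii)} Do the analogous computation for $\lambda^F_M\,\sigma_{G(V),\bar\rho(F,d)}\,\widetilde\sigma_{G(V)}$; here $\sigma_{G(V),\bar\rho(F,d)}$ is the half-braiding of $G(V)\in\Zc(\ca)$ evaluated at the object $\bar\rho(F,d)$, and by naturality of this braiding in its second variable one may slide it past $\lambda^F_M$, trading it for $\sigma_{G(V),\uhom(M,F(M))}$. \emph{(iv)} Now invoke the hypothesis $c\mid_\vc=d$: this lets one replace $c_{G(V),M}$ by $d_{V,M}$ everywhere, so that the composite is built entirely out of the \emph{module} structure $d$ of $F$ and the braiding of $G(V)$. \emph{(v)} Finally, recall how the prebalancing $\beta^V_{M,N}$ was defined in \eqref{def-gamma}--\eqref{prebalancing-V}: it was manufactured precisely from $d$, the evaluation/coevaluation of $G(V)$, and the braiding $\sigma^{V^*}_X$. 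Feeding the dinaturality relation \eqref{dinat:end:module:left} for $\lambda^F$ into the composite from (iii)--(iv), the braiding contributions $\sigma_{G(V),-}$ coming from $\widetilde\sigma$ and from the ambient half-braiding cancel against the braiding hidden inside $\beta$, and what survives collapses, via the zig-zag identities for $\underline{ev}/\underline{coev}$ and the defining property of $\mathfrak a,\mathfrak b$ (Lemma \ref{comp, a and b}), to $\lambda^F_M$ itself. By the universal property of the relative end this forces $\sigma_{G(V),\bar\rho(F,d)}\,\widetilde\sigma_{G(V)}=\id$, which is exactly \eqref{V-relative-center}.

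The main obstacle, and the heart of the proof, is step (v): bookkeeping the two a priori unrelated occurrences of the braiding — one from the half-braiding $\widetilde\sigma$ transported by $\Zc_\ca(\bar\rho)$, the other baked into the prebalancing $\beta$ used to form the relative end — and checking that, under the hypothesis $c\mid_\vc=d$ and the dinaturality \eqref{dinat:end:module:left}, they are mutually inverse rather than merely formally similar. This is where the braiding axioms \eqref{braiding1} of $\vc$ (hexagon-type identities for $\sigma$), the naturality squares \eqref{phi-2}, \eqref{psi-2} for the internal Hom adjunction, and the compatibility \eqref{compatib-c-d} between $c$ and $d$ all get used; the functoriality of $\bar\rho$ on morphisms via \eqref{barho-on-morphs1} is what keeps the diagram chase tractable. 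Once this cancellation is established, the last assertion — that we thereby obtain a functor $\Zc_\ca(\bar\rho)\colon\ca(\vc,\Mo)\to\Zc^\vc(\ca)$ — is immediate, since $\Zc_\ca(\bar\rho)$ is already functorial on all of $\Zc_\ca(\End_\vc(\Mo))$ and we have just checked it maps the subcategory $\ca(\vc,\Mo)$ into the full subcategory $\Zc^\vc(\ca)\subseteq\Zc(\ca)$.
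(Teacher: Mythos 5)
Your plan reproduces the paper's proof in all essentials: reduce the claim to the vanishing of the double braiding, unwind $\widetilde\sigma_{G(V)}$ via $l$, $r$, \eqref{barho-on-morphs1} and the counit $e$, compose with the dinatural transformations $\lambda$ so everything happens at the level of $\uhom(M,F(M))$, and then show that the dinaturality relation \eqref{dinat:end:module:left} — precisely because the prebalancing $\beta$ was manufactured from $d$ and the half-braiding of $G(V)$ — forces the required cancellation once the hypothesis $c\mid_\vc=d$ is invoked; the universal property of the relative end then upgrades the pointwise identity to the global one. The only differences are cosmetic: the paper unwinds $l^{-1},r^{-1}$ directly through Corollary \ref{unit-counit} and the $\psi$-naturality \eqref{psi-2} instead of quoting Lemma \ref{l,r,a,b}, and the compatibility \eqref{compatib-c-d} you list among the ingredients is not in fact used.
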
 
\pf Again, we shall assume that $\Mo$ is a strict module category. 
Recall that, we are denoting by $\lambda^F_M:\bar{\rho}(F,d) \to \uhom( M,F(M))$ the dinatural transformations associated to this relative end. Let us take $(F,c,d)\in \Zc_\ca(\End_\vc(\Mo)).$ We must prove that the double braiding is the identity, that is
\begin{equation}\label{double-b} \sigma_{\bar{\rho}(F,d), G(V)}\sigma_{G(V), \bar{\rho}(F,d)}=\id,
\end{equation}
for any $V\in \vc$. Here $\sigma_{G(V),-}$ is the half-braiding of the object $G(V)$ and $\sigma_{\bar{\rho}(F,d),-}$ is the half-braiding of the object $\bar{\rho}(F,d).$ According to Equation \eqref{relative-half-braid}, the half-braiding $\sigma_{\bar{\rho}(F,d),-}$ is equal to
$\sigma_{\bar{\rho}(F,d),X}=l_{X, (F,d)}\bar{\rho}(c_X)r^{-1}_{(F,d), X},$
for any $X\in \ca$. Hence, Equation \eqref{double-b} is equivalent to 
 \begin{align}\label{double-b22} l^{-1}_{G(V), (F,d)} \sigma^{-1}_{ G(V),\bar{\rho}(F,d)}= \bar{\rho}(c_{G(V)})r^{-1}_{(F,d), G(V)}.
\end{align}

Precomposing with $\lambda$, this Equation is equivalent to
\begin{align}\label{double-b2} 
\lambda^{G(V) \triangleright F}_M l^{-1}_{G(V), (F,d)} \sigma^{-1}_{G(V), \bar{\rho}(F,d)}= \lambda^{G(V) \triangleright F}_M\bar{\rho}(c_{G(V)})r^{-1}_{(F,d), G(V)},
\end{align}
for any $M\in \Mo$. Using the definition of $l$, the left hand side of \eqref{double-b2}  is equal to

\begin{align*} &= \lambda^{G(V) \triangleright F}_M \bar{\rho}(\id_{G(V)}  \triangleright e_{(F,d)} )h_{G(V)\ot \bar{\rho}(F,d) }\sigma^{-1}_{G(V), \bar{\rho}(F,d)}\\
&=\uhom(\id_M, (\id_{G(V)}  \triangleright e_{(F,d)})_M) \lambda^{\rho(G(V) \ot \bar{\rho}(F,d))}_M h_{G(V)\ot \bar{\rho}(F,d) }\sigma^{-1}_{G(V), \bar{\rho}(F,d)}\\
&=\uhom(\id_M, (\id_{G(V)}  \triangleright e_{(F,d)})_M) \psi^{G(V)\ot \bar{\rho}(F,d) }_{M, [ G(V)\ot \bar{\rho}(F,d)  ]  \triangleright M}(\id) \sigma^{-1}_{G(V), \bar{\rho}(F,d)}\\
&=\psi^{G(V)\ot \bar{\rho}(F,d) }_{M,  G(V) \triangleright F(M)}((\id_{G(V)}  \triangleright e_{(F,d)})_M)  \sigma^{-1}_{G(V), \bar{\rho}(F,d)}\\
&=\psi^{\bar{\rho}(F,d)\ot G(V)  }_{M, G(V) \triangleright  F(M)}\big((\id_{G(V)}  \triangleright e_{(F,d)})_M (\sigma^{-1}_{G(V), \bar{\rho}(F,d)} \triangleright \id_M)\big).
\end{align*}
The second equality follows from \eqref{barho-on-morphs1}, the third equation follows from Corollary \ref{unit-counit}, the fourth and fifth equations follow from the naturality of $\psi$, see Equation \eqref{psi-2}. 

Now, the right hand side of \eqref{double-b2}  is equal to
\begin{align*} &=\uhom(\id_M, c_{G(V),M}) \lambda^{F \triangleleft G(V)}_{M}r^{-1}_{(F,d), G(V)}\\
&=\uhom(\id_M, c_{G(V),M}) \lambda^{F \triangleleft G(V)}_{M} \bar{\rho}(e_{(F,d)} \triangleleft \id_{G(V)}) h_{ \bar{\rho}(F,d)\ot G(V)}\\
&=\uhom(\id_M, c_{G(V),M}(e_{(F,d)} \triangleleft \id_{G(V)})_M) \lambda^{\rho(\bar{\rho}(F,d)\ot G(V)) }_{M} h_{ \bar{\rho}(F,d)\ot G(V)}\\
&=\uhom(\id_M, c_{G(V),M}(e_{(F,d)} \triangleleft \id_{G(V)})_M) \psi^{\bar{\rho}(F,d)\ot G(V)}_{M, \bar{\rho}(F,d)\ot G(V) \triangleright M}(\id)\\
&=\psi^{\bar{\rho}(F,d)\ot G(V)}_{M, G(V) \triangleright F(M)} \big( c_{G(V),M}(e_{(F,d)} \triangleleft \id_{G(V)})_M \big)
\end{align*}
The first equality follows from \eqref{barho-on-morphs1},  the second equation follows from the definition of $r$,  the third equality follows from \eqref{barho-on-morphs1},  the fourth again follows from Corollary \ref{unit-counit}, and the last one follows from the naturality of $\psi$. Whence, for any $M\in \Mo$,  Equation \eqref{double-b2} is equivalent to
\begin{equation}\label{edx1} c_{G(V),M}(e_{(F,d)} \triangleleft \id_{G(V)})_M=(\id_{G(V)}  \triangleright e_{(F,d)})_M (\sigma^{-1}_{G(V), \bar{\rho}(F,d)} \triangleright \id_M).
\end{equation}
 Using the definition of $e$ given in Corollary \ref{unit-counit}, we have that Equation \eqref{edx1} is equivalent to
\begin{align}\label{edx2} 
\begin{split}
c_{G(V),M} &\phi^{\uhom(V\blacktriangleright M,F(V\blacktriangleright M))}_{V\blacktriangleright M,F(V\blacktriangleright M)}(\id)\left( \lambda^F_{V\blacktriangleright M}\triangleright \id_{V\blacktriangleright M}\right)=\\
&=\left( \id_V \blacktriangleright \phi^{\uhom(M,F(M))}_{M,F(M)}(\id)(\lambda^F_M \triangleright\id_M) \right)\left(\sigma^{-1}_{G(V), \bar{\rho}(F,d)}\triangleright\id_M \right).
\end{split}
\end{align}

Up to now, we have not used the fact that $(F,c,d)\in \ca(\vc,\Mo)$. We shall prove that, if $(F,c,d)\in \ca(\vc,\Mo)$, Equation \eqref{dinat:end:module:left} implies \eqref{edx2}. In this particular case, Equation \eqref{dinat:end:module:left} writes as
\begin{equation}\label{beta-d1} \uhom(\ev_V \blacktriangleright \id_M, \id_{F(M)}) \lambda^F_M=\beta^V_{V\blacktriangleright M, M}  \lambda^F_{V\blacktriangleright M},
\end{equation}
for any $M\in \Mo$, $V\in\vc$. Applying 
$\phi^{\bar{\rho}(F,d)}_{(V^*\ot V)\blacktriangleright M,F(M)}$  and then applying $\id_V\blacktriangleright -$ to \eqref{beta-d1} we obtain
\begin{align}\label{beta-d2}
\begin{split}
\id_V\blacktriangleright \phi^{\bar{\rho}(F,d)}_{(V^*\ot V)\blacktriangleright M,F(M)}&\big( \uhom(\ev_V \blacktriangleright \id_M , \id_{F(M)}) \lambda^F_M\big)=\\
&=\id_V\blacktriangleright \phi^{\bar{\rho}(F,d)}_{(V^*\ot V)\blacktriangleright M,F(M)}\big( \beta^V_{V\blacktriangleright M, M}  \lambda^F_{V\blacktriangleright M}\big).
\end{split}
\end{align}
Set $\sigma^V_{\bar{\rho}(F,d)}=\sigma_{G(V),\bar{\rho}(F,d)}: G(V)\ot\bar{\rho}(F,d) \to \bar{\rho}(F,d)\ot G(V)$. Composing (to the right) with 

$$\left( (\sigma^V_{\bar{\rho}(F,d)})^{-1} \triangleright \id_{(V^*\ot V)\blacktriangleright M }\right) \left( (\id_{\bar{\rho}(F,d)}\ot \coev_{G(V)})\triangleright \id_{V\blacktriangleright M} \right)$$ 
to the left hand side of \eqref{beta-d2} we get  
\begin{align*} &=\left(\id_V\blacktriangleright \phi^{\bar{\rho}(F,d)}_{(V^*\ot V)\blacktriangleright M,F(M)}\big( \uhom(\ev_V \blacktriangleright \id_M ,\id_{F(M)}) \lambda^F_M\big)\right)\\
& \left( (\sigma^V_{\bar{\rho}(F,d)})^{-1} \triangleright \id_{(V^*\ot V)\blacktriangleright M }\right)\left((\id_{\bar{\rho}(F,d)}\ot \coev_{G(V)})\triangleright \id_{V\blacktriangleright M} \right)\\
&=\left(\id_V\blacktriangleright \phi^{\bar{\rho}(F,d)}_{M,F(M)}\big(  \lambda^F_M\big)\right) \left( (\id_{V}\blacktriangleright (\id_{\bar{\rho}(F,d)} \ot \ev_{G(V)} \triangleright \id_M) \right)\\&\left( (\sigma^V_{\bar{\rho}(F,d)})^{-1} \triangleright \id_{(V^*\ot V)\blacktriangleright M }\right)
 \left((\id_{\bar{\rho}(F,d)}\ot \coev_{G(V)})\triangleright \id_{V\blacktriangleright M} \right)\\
&=\left( \id_V \blacktriangleright \phi^{\uhom(M,F(M))}_{M,F(M)}(\id)(\lambda^F_M \triangleright\id_M) \right)\left( (\sigma^V_{\bar{\rho}(F,d)})^{-1}\triangleright\id_M \right).
\end{align*}
The second equality follows from the naturality of $\phi$ and the third equality follows from  the naturality of $\phi$ and  rigidity axioms. Note that, we have thus obtained the right hand side of \eqref{edx2}. 

\medbreak

Using the naturality of $\phi$, see Equation \eqref{phi-2},  taking $Z=\uhom(V\blacktriangleright M,F(V\blacktriangleright M)),$  and composing the right hand side of \eqref{beta-d2} with  $\left( (\sigma^V_{\bar{\rho}(F,d)})^{-1} \triangleright \id_{(V^*\ot V)\blacktriangleright M }\right) \left((\id_{\bar{\rho}(F,d)}\ot \coev_{G(V)})\triangleright \id_{V\blacktriangleright M} \right),$
we get
\begin{align*} &=\left( \id_V\blacktriangleright \phi^{Z}_{(V^*\ot V)\blacktriangleright M,F(M)}(\beta^V_{V\blacktriangleright M, M})\right)\left( \id_V\blacktriangleright (\lambda^F_{V\blacktriangleright M} \triangleright \id_{(V^*\ot V)\blacktriangleright M})\right)\\
&\left( (\sigma^V_{\bar{\rho}(F,d)})^{-1} \triangleright \id_{(V^*\ot V)\blacktriangleright M }\right) \left((\id_{\bar{\rho}(F)}\ot \coev_{G(V)})\triangleright \id_{V\blacktriangleright M} \right)\\
&= \left(\id_V\blacktriangleright  \gamma^{Z}_{V,V\blacktriangleright M,M}( \alpha)\right)\left( \id_V\blacktriangleright (\lambda^F_{V\blacktriangleright M} \triangleright \id_{(V^*\ot V) \blacktriangleright M})\right)\\
&\left( (\sigma^V_{\bar{\rho}(F,d)})^{-1} \triangleright \id_{(V^*\ot V)\blacktriangleright M }\right) \left((\id_{\bar{\rho}(F,d)}\ot \coev_{G(V)})\triangleright \id_{V\blacktriangleright M} \right)\\
&=\left( (\id_V\ot \ev_V)\blacktriangleright \id_{F(M)} \right)\left(\id_{V\ot V^*} \blacktriangleright d_{V,M} \alpha\right)\left((\id_{G(V)}\ot (\sigma^{V^*}_{Z})^{-1}  )\triangleright \id_{V\blacktriangleright M} \right)\\& \left((\id_{G(V)}\ot \lambda^F_{V\blacktriangleright M}) (\sigma^V_{\bar{\rho}(F,d)})^{-1}\triangleright \id_{(V^*\ot V) \blacktriangleright M}\right) \left((\id_{\bar{\rho}(F,d)}\ot \coev_{G(V)}\ot \id_{G(V)})\triangleright \id_{ M} \right)\\
&=\left( (\id_V\ot \ev_V)\blacktriangleright \id_{F(M)} \right)\left(\id_{V\ot V^*} \blacktriangleright d_{V,M} \alpha\right)\left( \sigma^{-1}_{G(V)\ot G(V^*),Z}\triangleright \id_{V\blacktriangleright M}\right)\\&\left( \lambda^F_{V\blacktriangleright M}\ot \coev_{G(V)}\ot \id_{G(V)})\triangleright \id_{ M}\right)=\left( d_{V,M} \alpha\right) \left( \lambda^F_{V\blacktriangleright M}\triangleright \id_{V\blacktriangleright M}\right).
\end{align*}
The second equality follows by using the definition of $\beta$ given in \eqref{prebalancing-V}. Here, we are denoting $\alpha=\phi^{\uhom(V\blacktriangleright M,F(V\blacktriangleright M))}_{V\blacktriangleright M, F(V\blacktriangleright M )}(\id)$. The third equality follows from the definition of the morphism $\gamma$ given in \eqref{def-gamma}. The fourth equality follows from naturality of $\sigma$ and \eqref{braiding1}.  The half braiding condition and the Rigidity axioms imply the last equality. Observe  that,  under hypothesis $(F,c,d)\in \ca(\vc,\Mo)$, that is $d_{V,M} =c_{G(V),M}$, we have thus obtained the left hand side of \eqref{edx2}. 
\epf

\begin{cor}\label{central-hm} Let $\vc$ be a braided tensor category, $\ca$ a $\vc$-central tensor category via the right exact functor $G:\vc\to \Zc(\ca)$.  Choosing $\Mo=\ca$, we have functors $$\Zc_\ca(\rho):\Zc^\vc(\ca)\to \ca, \,\,\Zc_\ca(\bar{\rho}):\ca \to \Zc^\vc(\ca).$$
The following statements hold.
\begin{itemize}
\item[1.] The adjoint pair $(\Zc_\ca(\rho), \Zc_\ca(\bar{\rho}))$ is monadic.
\item[2.]  The functor $T:\ca\to \ca $ given by
$$T(X)=\oint_{Y\in \ca} (Y\ot X\ot Y^*, \beta) $$
has structure of Hopf monad and the forgetful functor $\uc:\Zc^\vc(\ca)\to \ca $ factorizes through 
\begin{equation*}
\vspace{.1cm}
\xymatrix{
\Zc^\vc(\ca)\;\;
\ar[rr]^-{\widehat{F}}
\ar[dr]_-{\uc}
&&
\ca^T
\ar[dl]^-{f}\ar@{}[d]|{}&\\
&\;\mathcal{C}.&
}
\end{equation*} a tensor equivalence $\Zc^\vc(\ca)\simeq \ca^T$.
Here the prebalancing is given by
$$\beta^X_{V,M,N}:G(V)\ot N\ot X\ot M^*\to N\ot X\ot (G(V)^*\ot M)^*,$$
$$\beta^X_{V,M,N}=\big(\ev_V\ot \id_{N\ot X}\ot \ev_M\ot \id_{(V^*\ot M)^*}\big)\big((\sigma^{V^*}_{G(V)\ot N\ot X})^{-1}\ot \id\big)\\$$
$$\big(\id_{}\ot \coev_{V^*\ot M}\big),$$
for any $V\in \vc$, $M,N,X\in\ca.$ Also $f:\ca^T\to \ca$ is the forgetful functor.
\end{itemize}
\end{cor}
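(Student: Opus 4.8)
The plan is to reduce both claims to Beck's monadicity theorem together with the general theory of Hopf monads, feeding in the structural results already established.

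First, set up the adjunction. Take $\Mo=\ca$ with its regular module structure, so that $\uhom(X,Y)=Y\ot X^{*}$ and, by the Example preceding Theorem~\ref{in-the-centralizer} and by Lemma~\ref{dual-equiv-2s}, $\ca(\vc,\ca)$ is monoidally equivalent to $\ca^{rev}$. Theorem~\ref{adjoint-structure-end} gives the adjunction $\rho\dashv\bar\rho$ between $\ca$ and $\End_\vc(\ca)$, and by Lemma~\ref{rho-in-V-modfunct}(3) and the paragraph following Theorem~\ref{adjoint-structure-end} both functors are $\ca$-bimodule functors with bimodule unit and counit. Applying the $2$-functor $\Zc_\ca$ and then corestricting along the inclusions $\Zc^\vc(\ca)\hookrightarrow\Zc(\ca)$ and $\ca(\vc,\ca)\hookrightarrow\Zc_\ca(\End_\vc(\ca))$ --- which is legitimate by Lemma~\ref{rho-relative} and Theorem~\ref{in-the-centralizer} --- produces an adjunction between $\ca$ and $\Zc^\vc(\ca)$ in which, under the above identifications, $\Zc_\ca(\rho)$ is the forgetful functor $\uc\colon\Zc^\vc(\ca)\to\ca$, $(X,\sigma)\mapsto X$, a strong monoidal functor between finite tensor categories, and $\Zc_\ca(\bar\rho)$ is the functor $\ca\to\Zc^\vc(\ca)$ furnishing the other leg.

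To prove (1), I verify the hypotheses of (the abelian-category form of) Beck's monadicity theorem for $\uc$; all four categories in sight are finite abelian. The functor $\uc$ is exact: $\rho$ is exact by Lemma~\ref{rho-in-V-modfunct}(3), $\bar\rho$ is exact by Theorem~\ref{adjoint-structure-end}(2), and $\Zc_\ca$ carries exact bimodule functors to exact functors, because for any $\ca$-bimodule category $\No$ the forgetful functor $\Zc_\ca(\No)\to\No$ is exact and conservative. An exact functor between finite abelian categories admits both a left and a right adjoint, so $\uc$ has the required adjoint and preserves coequalizers. Moreover $\uc$ is conservative: a morphism of $\Zc^\vc(\ca)$ whose underlying morphism in $\ca$ is invertible is already invertible in $\Zc^\vc(\ca)$. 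An exact, conservative functor admitting an adjoint creates coequalizers of split (indeed of reflexive) pairs, so Beck's theorem applies and the adjunction is monadic: the comparison functor $\widehat F\colon\Zc^\vc(\ca)\to\ca^{T}$ is an equivalence of categories, where $T$ is the induced (co)monad on $\ca$ and $f\colon\ca^{T}\to\ca$ the forgetful functor, with $f\widehat F=\uc$. Proposition~\ref{dominant} may be invoked here to interchange faithfulness of $\uc$, faithfulness of $T$, and dominance of $\uc$.

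For (2), the monoidal upgrade is formal: $\uc$ is strong monoidal between autonomous (finite tensor) categories, so the Hopf-monad formalism of \cite{BV} equips $T$ with a canonical Hopf-monad structure for which $\widehat F$ is an equivalence of \emph{tensor} categories and the triangle $\uc=f\widehat F$ commutes --- this is the asserted factorization. To identify $T$ on objects, note that under $\ca^{*}_\ca\simeq\ca^{rev}$ the object $X\in\ca$ corresponds to the module endofunctor $(-)\ot X$ of $\ca$, so $T(X)$ is the underlying object of $\Zc_\ca(\bar\rho)$ evaluated on it, which by formula~\eqref{DEF:adj-rho} for $\bar\rho$ equals $\oint_{Y\in\ca}\big(\uhom(Y,Y\ot X),\beta\big)=\oint_{Y\in\ca}\big(Y\ot X\ot Y^{*},\beta\big)$; the displayed expression for $\beta^{X}_{V,M,N}$ is then obtained by transporting the prebalancing~\eqref{prebalancing-V}, via~\eqref{def-gamma} and~\eqref{preb-gamahat}, through the isomorphism $\uhom(M,N)=N\ot M^{*}$ and the rigidity (co)evaluations, where $\sigma^{V^{*}}_{-}$ becomes precisely the half-braiding $\sigma_{G(V^{*}),-}$ of $G(V^{*})\in\Zc(\ca)$ --- which is how the braiding of $\vc$ enters. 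The main obstacle is exactly this last identification: matching the abstract (co)monad with the explicit relative end and its stated prebalancing on the nose is a careful bookkeeping computation that transports $\beta$ through the dualities and through $\ca(\vc,\ca)\simeq\ca^{rev}$, and one must also check that the Hopf-monad structure on $T$ transported from the autonomy of $\Zc^\vc(\ca)$ agrees with the intrinsic comonoidal structure of $\oint_Y Y\ot(-)\ot Y^{*}$; once monadicity is in hand this is the standard dictionary of \cite{BV}, but it is where the real work lies.
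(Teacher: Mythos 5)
Your proof is correct and takes essentially the same route as the paper's (which is extremely terse, two sentences): reduce item (1) to exactness of $\bar\rho$ from Theorem \ref{adjoint-structure-end}(2) together with faithfulness/conservativity of the forgetful functor and a Beck-type monadicity criterion, and item (2) to the Hopf monad formalism of \cite{BV}, with $T(X)$ read off from the formula \eqref{DEF:adj-rho} for $\bar\rho$ under $\uhom(Y,N)=N\ot Y^{*}$. You merely unpack what the paper compresses; incidentally, your identification $\uc=\Zc_\ca(\rho)$ corrects what appears to be a typo in the paper's own proof ($\uc=\Zc_\ca(\bar\rho)$, whose source and target do not match the forgetful functor).
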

\pf 1. It follows from  Theorem  \ref{adjoint-structure-end} (2), since the functor $\uc=\Zc_\ca(\bar{\rho})$ is faithful and exact.
\medbreak

2. The Hopf monad associated to the adjunction of item (1) is equal to $T$. Then the result follows from \cite[Theorem 9.1]{BV}. \epf

\subsection{Braided commutative algebras inside the relative center}
 Let $\vc$ be a braided tensor category, $\ca$ a $\vc$-central tensor category via the right exact functor $G:\vc\to \Zc(\ca)$, and $\Mo$ an exact indecomposable $\ca$-module category. Lets define  $$\pi_M:\Zc_\ca(\bar{\rho})(\Id_\Mo)= \oint_{M\in \Mo} \uhom(M,M)\to \uhom(M,M),$$ the dinatural transformations associated to this relative end. That is $\pi_M=\lambda^{\Id_\Mo}_M.$ See \eqref{DEF:adj-rho}.
 
\begin{cor}\label{def-relat-lag} Let $\vc$ be a braided tensor category, $\ca$ a $\vc$-central tensor category via the right exact functor $G:\vc\to \Zc(\ca)$, and $\Mo$ an exact indecomposable $\ca$-module category. The following statements hold.
\begin{itemize}
\item[1.]  The object $A_{\vc,\Mo}=\Zc_\ca(\bar{\rho})(\Id_\Mo)\in \Zc^\vc(\ca)$ is an algebra with half-braiding $\sigma_X: A_{\vc,\Mo}\ot X\to X\ot A_{\vc,\Mo}$ such that the following diagram 
\begin{equation}\label{sigma-pi}
  \xymatrix@C=34pt@R=34pt{
    A_{\vc,\Mo} \otimes X
    \ar[rr]^-{\pi_{X \triangleright M} \otimes \id_X}
    \ar[d]_{\sigma_X}
    & & \underline{\End}(X \triangleright M) \otimes X
    \ar[d]^{\mathfrak{b}_{X,M,X \triangleright M}} \\
    X \otimes  A_{\vc,\Mo} 
    \ar[r]^-{\id_X \otimes \pi_M}
    & X \otimes \uhom(M, M)
    \ar[r]^{\mathfrak{a}^{-1}_{X,M,M} }
    & \uhom( M, X \triangleright M)
  }
\end{equation}
is commutative.
 
\item[2.] The algebra $A_{\vc,\Mo}$ is a commutative algebra in $\Zc^\vc(\ca)$.

\item[3.] $A_{\vc,\Mo}$ is a connected algebra, that is $\dim\Hom_{\Zc(\ca)}(\uno, A_{\vc,\Mo})=1.$

\end{itemize}
 
\end{cor}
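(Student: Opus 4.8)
The plan is to extract all three statements from the structure already assembled, proving items (1) and (3) almost formally and reserving the work for item (2). For item (1): the functor $\rho\colon\ca\to\End_\vc(\Mo)$ is a strong monoidal $\ca$-bimodule functor (Lemma~\ref{rho-in-V-modfunct}(3)), so its right adjoint $\bar{\rho}$ is canonically a lax monoidal bimodule functor, and applying the center $2$-functor $\Zc_\ca$ makes $\Psi=\Zc_\ca(\bar{\rho})\circ H\colon\ca^*_\Mo\to\Zc^\vc(\ca)$ a lax monoidal functor; hence $A_{\vc,\Mo}=\Psi(\Id_\Mo)$, the image of the unit object (trivially an algebra), is an algebra in $\Zc^\vc(\ca)$, with half-braiding the one furnished by Theorem~\ref{in-the-centralizer}. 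To verify diagram~\eqref{sigma-pi}, observe that in $\End_\vc(\Mo)$ one has $\Id_\Mo\triangleleft X=X\triangleright\Id_\Mo$ on the nose: both are the functor $X\triangleright-$, and by Lemma~\ref{rho-in-V-modfunct}(2) their $\vc$-module structures are both $(\sigma_{G(V),X})^{-1}\triangleright\id$. Therefore the half-braiding $c^{\Id_\Mo}_X$ of the unit object of $\ca(\vc,\Mo)$ is the identity, and~\eqref{relative-half-braid} applied to $\Zc_\ca(\bar{\rho})$, together with the explicit $\ca$-bimodule structure $l,r$ of $\bar{\rho}$ recalled before Lemma~\ref{l,r,a,b}, gives $\sigma_X=l_{X,\Id_\Mo}\,r^{-1}_{\Id_\Mo,X}$; consequently $\lambda^{X\triangleright\Id_\Mo}_M\,l^{-1}_{X,\Id_\Mo}\,\sigma_X=\lambda^{\Id_\Mo\triangleleft X}_M\,r^{-1}_{\Id_\Mo,X}$. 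Rewriting each side with Lemma~\ref{l,r,a,b} for $F=\Id_\Mo$ (and $\pi_M=\lambda^{\Id_\Mo}_M$) turns this identity into $\mathfrak{a}^{-1}_{X,M,M}(\id_X\ot\pi_M)\sigma_X=\mathfrak{b}_{X,M,X\triangleright M}(\pi_{X\triangleright M}\ot\id_X)$, which is precisely the commutativity of~\eqref{sigma-pi}.

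For item (3): applying $\Zc_\ca$ to $\rho\dashv\bar{\rho}$ and restricting (via Lemma~\ref{rho-relative} and Theorem~\ref{in-the-centralizer}) gives the adjoint pair $(\Zc_\ca(\rho),\Psi)$ between $\Zc^\vc(\ca)$ and $\ca^*_\Mo\simeq\ca(\vc,\Mo)$. A direct computation from Lemma~\ref{rho-relative} shows that $\Zc_\ca(\rho)$ sends the unit $\uno$ of $\Zc^\vc(\ca)$, with its trivial half-braiding, to $(\Id_\Mo,\id,\id)$, the unit object of $\ca(\vc,\Mo)$. Since $\Zc^\vc(\ca)$ is a full subcategory of $\Zc(\ca)$ and $\uno,A_{\vc,\Mo}$ both lie in it,
\begin{align*}
\Hom_{\Zc(\ca)}(\uno,A_{\vc,\Mo}) &= \Hom_{\Zc^\vc(\ca)}(\uno,\Psi(\Id_\Mo)) \cong \Hom_{\ca(\vc,\Mo)}(\Zc_\ca(\rho)(\uno),\Id_\Mo) \\
&= \End_{\ca^*_\Mo}(\uno) = \ku,
\end{align*}
the last equality because $\ca^*_\Mo=\End_\ca(\Mo)$ is a finite tensor category over the algebraically closed field $\ku$. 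Hence $A_{\vc,\Mo}$ is connected.

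For item (2): the dinatural family $(\pi_M)_M$ of the relative end $A_{\vc,\Mo}=\oint_M\uhom(M,M)$ is in particular dinatural in the ordinary sense, so the universal property of the ordinary end yields a unique morphism $\iota\colon A_{\vc,\Mo}\to A_\Mo:=\int_M\uhom(M,M)$ with $\pi'_M\iota=\pi_M$, where $\pi'_M$ denote the dinatural transformations of $A_\Mo$; since the dinatural family of any end is jointly monic, $\iota$ is a monomorphism. The multiplications of $A_{\vc,\Mo}$ and of $A_\Mo$ are both characterized by $\pi_M\circ m=\underline{comp}_{M,M,M}(\pi_M\ot\pi_M)$ (from the lax structure of $\Psi$, resp.\ of Shimizu's $\rho^{\ra}$), and their half-braidings are both characterized by~\eqref{sigma-pi} — for $A_{\vc,\Mo}$ this is item (1), and applying item (1) in the degenerate case $\vc=\vect_\ku$ recovers $A_\Mo$ with exactly that half-braiding (cf.~\cite{Sh2}). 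Joint monicity of $(\pi'_M)_M$ (together with exactness of $-\ot X$) then forces $\iota$ to be a morphism of algebras in $\Zc(\ca)$. As $A_\Mo$ is a commutative algebra in $\Zc(\ca)$ \cite{Sh2,Dav1}, and a monomorphic image of a commutative algebra inside a braided category is again commutative (by naturality of the braiding and cancellation of the monomorphism), $A_{\vc,\Mo}$ is commutative in $\Zc(\ca)$, hence in the full braided subcategory $\Zc^\vc(\ca)$. I expect item (2) to be the main obstacle: although the argument is structurally short, it rests on the fact that the purely "additive" map $\iota$ is automatically compatible with the half-braiding built from the prebalancing $\beta$, and this is exactly the point where the interplay between $\beta$ and the braiding of $\vc$ established in Theorem~\ref{in-the-centralizer} does the work; a more self-contained alternative is to avoid $A_\Mo$ and check $m\,\sigma_{A_{\vc,\Mo},A_{\vc,\Mo}}=m$ directly from~\eqref{sigma-pi} with $X=A_{\vc,\Mo}$ and the counit formula of Corollary~\ref{unit-counit}, mirroring Shimizu's original commutativity proof at the cost of a longer computation.
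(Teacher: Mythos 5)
Your proofs of items (1) and (3) coincide with the paper's. For (1), you fill in exactly what the paper leaves implicit: from the formula $\sigma_{\bar\rho(F,d),X}=l_{X,(F,d)}\bar\rho(c_X)r^{-1}_{(F,d),X}$ of Theorem~\ref{in-the-centralizer}, the fact that $X\triangleright\Id_\Mo$ and $\Id_\Mo\triangleleft X$ coincide as $\vc$-module endofunctors gives $\sigma_X=l_{X,\Id_\Mo}r^{-1}_{\Id_\Mo,X}$, and then Lemma~\ref{l,r,a,b} with $F=\Id_\Mo$ yields \eqref{sigma-pi}; this is precisely the paper's one-line ``follows from the definition of $\sigma_X$ and Lemma~\ref{l,r,a,b}.'' For (3) you use that $\End_{\ca^*_\Mo}(\uno)\cong\ku$ as part of the finite tensor category axioms, while the paper unwinds it via $\Mo\simeq\ca_A$ with $A$ simple; both are correct and essentially the same fact.

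Where you genuinely diverge is item (2). The paper replays Shimizu's commutativity computation from \cite[Theorem 4.9]{Sh2}, with Claim~\ref{lambda-comp} supplying the compatibility of $\mu_{F,G}$ with the dinatural projections. You instead build the canonical monomorphism $\iota:A_{\vc,\Mo}\to A_\Mo$ from the relative end into Shimizu's ordinary end, verify (by joint monicity of $(\pi'_M)_M$, using that the end is a finite limit and $X\ot-$ is exact) that $\iota$ is a morphism of algebras in $\Zc(\ca)$ compatible with the half-braidings characterized by \eqref{sigma-pi}, and then transfer commutativity from $A_\Mo$ down the mono $\iota$ via naturality of the braiding. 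This is a clean reduction that avoids repeating the calculation, and it makes transparent that commutativity of $A_{\vc,\Mo}$ is inherited from the classical case. Note, however, that your route is not really independent of Claim~\ref{lambda-comp}: you use its $F=G=\Id_\Mo$ instance to identify the product of $A_{\vc,\Mo}$ as $\underline{comp}(\pi_M\ot\pi_M)$, and you invoke the $\vc=\vect_\ku$ instance of item (1) to get the analogous characterization of the half-braiding of $A_\Mo$. So the two proofs share all the load-bearing ingredients; yours simply packages the final commutativity check as a cancellation along a monomorphism rather than as a direct diagram chase. Both are correct; yours is arguably more conceptual, the paper's is more self-contained within the relative framework.
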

\pf We assume $\Mo$ to be strict. 

1. The functor $\Zc_\ca(\bar{\rho})$ is lax monoidal with structure morphisms $$\mu_{F,G}: \bar{\rho}(F)\ot \bar{\rho}(G) \to \bar{\rho}(FG) \quad \text{ and } \quad \varepsilon: \uno \to \bar{\rho}(\Id_\Mo)$$ for $(F,d),(G,\widetilde{d}) \in \End_\vc(\Mo)$. Hence it sends algebras to algebras. Since  $\Id_\Mo$ is cannonically an algebra in $\ca (\vc, \Mo)$, this endows $A_{\vc,\Mo}$with the structure of algebra in $\Zc^\vc(\ca)$. The multiplication and unit are defined as
$$m: A_{\vc,\Mo} \ot A_{\vc,\Mo} \to A_{\vc,\Mo}, \quad m= \mu_{\Id, \Id},\quad \text{ and } \quad u: \uno \to A_{\vc,\Mo}, \quad u=\varepsilon.$$ 

The commutativity of the diagram \eqref{sigma-pi} follows from the definition of $\sigma_X$ and Lemma \ref{l,r,a,b}.

\medbreak

\medbreak

2. The proof of the commutativity of the algebra $A_{\vc, \Mo}$ follows the same steps as in \cite[Theorem 4.9]{Sh2} by using the next Claim, which is a  generalization of   \cite[Lemma 3.8]{Sh2}.

\begin{claim}\label{lambda-comp}
    For all  $(F,d),(G,\widetilde{d}) \in \End_\vc(\Mo)$, the following equations hold
    \begin{equation*}
        \lambda^{FG}_M \mu_{F,G}= \underline{\text{comp}}_{M,G(M),FG(M)}(\lambda^F_{G(M)} \ot \lambda^G_M), \ \ 
        \lambda^{\Id}_M \varepsilon=\underline{coev}_{\uno,M}.
    \end{equation*}
  
\end{claim}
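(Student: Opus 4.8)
The plan is to verify both identities by unpacking the definition of the lax monoidal structure $\mu$ on $\Zc_\ca(\bar\rho)$ and comparing against the universal property of the relative end that defines $\bar\rho$. The first observation is that $\mu_{F,G}$ is the mate, under the adjunction $(\rho,\bar\rho)$ of Theorem \ref{adjoint-structure-end}, of the composite natural transformation $\rho(\bar\rho(F)\ot\bar\rho(G))\cong \rho(\bar\rho(F))\circ\rho(\bar\rho(G))\xrightarrow{e_F\circ e_G} F\circ G$, where $e$ is the counit from Corollary \ref{unit-counit} and the first isomorphism is the (co)monoidal structure of $\rho$; similarly $\varepsilon$ is the mate of the canonical algebra unit $\uno\to\rho(\uno)\to\Id_\Mo$. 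Since $\bar\rho(FG)=\oint_{M}(\uhom(M,FG(M)),\beta)$ is defined by its universal property with dinatural projections $\lambda^{FG}_M$, to identify a morphism into it, it suffices to postcompose with each $\lambda^{FG}_M$ and check the resulting family agrees.

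First I would handle the unit identity $\lambda^{\Id}_M\varepsilon=\underline{\coev}_{\uno,M}$. By definition $\varepsilon$ is the component at $\uno$ of the unit $h:\Id\to\bar\rho\rho$ composed with $\bar\rho$ applied to $\rho(\uno)\to\Id_\Mo$; but $\rho(\uno)=\uno\triangleright-=\Id_\Mo$ canonically, so $\varepsilon=h_{\uno}$. Corollary \ref{unit-counit} gives $\lambda^{\rho(X)}_M h_X=\psi^X_{M,X\triangleright M}(\id)$, and with $X=\uno$, $\rho(\uno)=\Id_\Mo$, the right-hand side is precisely $\psi^{\uno}_{M,M}(\id)=\underline{\coev}_{\uno,M}$ by the definition of $\underline{\coev}$ in Section \ref{subsection:internal hom}. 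This disposes of the second equation with essentially no computation.

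For the multiplication identity, I would postcompose the right-hand side $\underline{\comp}_{M,G(M),FG(M)}(\lambda^F_{G(M)}\ot\lambda^G_M)$ with nothing (it already maps into $\uhom(M,FG(M))$) and show it equals $\lambda^{FG}_M\mu_{F,G}$. The strategy mirrors \cite[Lemma 3.8]{Sh2}: expand $\mu_{F,G}$ through the adjunction as above, use Corollary \ref{unit-counit} to rewrite $\lambda^{FG}_M\bar\rho(e_F\circ e_G)h_{\bar\rho(F)\ot\bar\rho(G)}$ in terms of $\psi$ applied to the counit components, then use the naturality of $\psi$ (Equation \eqref{psi-2}), the description of $(e_{(F,d)})_M$ from Corollary \ref{unit-counit}, and Lemma \ref{comp, a and b} expressing $\underline{\comp}$ via $\underline{\ev}$ and $\mathfrak{a}^{-1}$ (or $\mathfrak{b}$). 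One also needs Lemma \ref{l,r,a,b} to interface the $\ca$-bimodule structure of $\bar\rho$ with the internal-Hom structure maps $\mathfrak{a},\mathfrak{b}$. The computation is a diagram chase in $\ca$ using only rigidity axioms, the definitions of $\underline{\ev},\underline{\coev},\underline{\comp}$, and the dinaturality/naturality already recorded; crucially, the prebalancing $\beta$ and the braiding of $\vc$ play no role here, because the identity is about the monoidal (composition) structure, not the half-braiding — the claim is literally the $\End_\vc(\Mo)$-analogue of Shimizu's statement, and the $\vc$-module structures $d,\widetilde d$ only enter as bookkeeping data that is transported along formally.

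The main obstacle I anticipate is purely organizational rather than conceptual: keeping track of the several isomorphisms (the comonoidal structure of $\rho$, the module-functor structures, the associativity constraints $m$ of $\Mo$ which we suppress by strictness) while translating between the "$\Hom$-level" formulation where $\mu$ is defined via mates and the "object-level" formulation in terms of $\lambda$, $\underline{\comp}$, $\mathfrak{a}$, $\mathfrak{b}$. Once the translation dictionary from the proof of Theorem \ref{adjoint-structure-end} and Corollary \ref{unit-counit} is in place, each equality reduces to an application of \eqref{psi-2}, \eqref{phi-2}, or Lemma \ref{comp, a and b}, exactly as in \cite[Lemma 3.8]{Sh2}; I would therefore present the argument as "following \emph{mutatis mutandis}" that reference, spelling out only the points where the $\vc$-module functor data $d$ must be inserted, and checking that it cancels.
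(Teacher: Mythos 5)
Your proposal is correct and follows essentially the same route the paper intends: the paper does not spell out a proof of Claim~\ref{lambda-comp} but simply declares it a generalization of \cite[Lemma 3.8]{Sh2}, and your argument reproduces that lemma's computation — expand $\mu_{F,G}$ and $\varepsilon$ via the adjunction, rewrite $\lambda^{\rho(X)}_M h_X$ and $(e_{(F,d)})_M$ using Corollary~\ref{unit-counit}, and reduce to the definitions of $\underline{ev}$, $\underline{coev}$, $\underline{comp}$ via naturality of $\psi$. Your observation that the prebalancing $\beta$ and the braiding of $\vc$ do not enter is the essential (and correct) point justifying the ``mutatis mutandis'' claim; one small remark is that your invocation of Lemma~\ref{l,r,a,b} is not actually needed for this particular identity (it enters elsewhere, in the half-braiding computation of Corollary~\ref{def-relat-lag}(1)), since the whole chase closes using only \eqref{psi-2}, the interchange law, and the definition of $\underline{comp}$.
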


3. Since $\Zc_\ca(\rho):\Zc^\vc(\ca)\to \ca(\vc,\Mo)$ is the adjoint to $\Zc_\ca(\bar{\rho})$ then
$$\Hom_{\Zc^\vc(\ca)}(\uno, A_{\vc,\Mo})\simeq \End_m(\Id_\Mo)\simeq \ku.$$
The last isomorphism follows since $\Mo$ exact indecomposable then $\Mo\simeq \ca_A$ for some simple algebra $A\in \ca$. See \cite[Theorem B.1]{EO2}. Then the category  $\ca^*_\Mo.$ is identified with the category of $A$-bimodule, ${}_A\ca_A$. The functor $\Id_\Mo$ is identified with the regular bimodule $A$. Hence any bimodule morphism $f:A\to A$ is invertible or zero. 
\epf

\subsection{Computations over the category of representations of a braided quantum group}\label{subsection:braided-adjoint} Let $(\vc,\sigma)$ be a braided tensor category and $(H,m,u,\Delta,\epsilon, S)\in \vc$ be a (braided) Hopf algebra \cite{Maj0}, \cite{Maj}. We can consider the tensor category ${}_H\vc $ of left $H$-modules inside $\vc$. Recall that if $(V,\rho)\in {}_H\vc$ is an $H$-module, with $\rho:H\ot V\to V$, the dual $V^*$ has an $H$-module structure given by
\begin{equation}\label{duals-in-braided-Hopf} \rho_{V^*}:H\ot V^*\to V^*,\\
\rho_{V^*}=(\ev_V\ot \id_{V^*})(\id_{V^*}\ot \rho\ot \id_{V^*})(\sigma_{H,V^*}(S\ot\id_{V^*} )\ot \coev_V).
\end{equation}
If $(V,\rho_V)$, $(W, \rho_W)$ are objects in ${}_H\vc$, then $V\ot W$ in $\vc$ has a left $H$-module structure given by
\begin{equation}\label{module-tensor-prod}\rho_{V\ot W}= (\rho_V\ot \rho_W)(\id_H\ot \sigma_{V,H}\ot \id_W)(\Delta\ot \id_{V\ot W}).
\end{equation}

There exists a central functor $G:\vc\to \Zc({}_H\vc)$ \cite[Example 4.6]{LW2}, where $G(V)=(V_{triv}, \sigma^{-1}_{-,V})$, and $\rho_{triv}:H\ot V_{triv}\to V_{triv}$, $\rho_{triv}=\epsilon \ot\id_V$ is the trivial $H$-module.

\begin{defi} Let us define the {\it braided adjoint algebra} associated to $H$. As algebras $H_{\ad}=H$  in $\vc$. Define the action  $\rho^{\ad}: H\ot H_{\ad}\to H_{\ad}$, $\rho^{\ad}=m(m\ot\id) (\id_{H\ot H}\ot S) (\id_H\ot \sigma_{H,H})(\Delta\ot \id_H).$
\end{defi}
\begin{lema}\label{adj-braided} The following statements hold.
\begin{itemize}
    \item[1.] $(H_{\ad},\rho^{\ad} )$ is an object in ${}_H\vc.$  

 \item[2.]    $(H_{\ad},\rho^{\ad} )$ is an object in $\Zc({}_H\vc)$ with braiding given by
 $$\gamma_{(X,\rho_X)}:H_{\ad}\ot X\to X\ot H_{\ad}, \ \ \gamma_{(X,\rho_X)}=(\rho_X\ot\id_H)(\id_H\ot \sigma^{-1}_{X,H})(\Delta\ot\id_X). $$
 Moreover, $(H_{\ad},\rho^{\ad} )\in \Zc^\vc({}_H\vc).$  
  \item[3.] The object $(H_{\ad},\rho^{\ad} )$ with the same product as $H$ is a braided commutative algebra in $\Zc({}_H\vc)$.
\end{itemize}\qed
\end{lema}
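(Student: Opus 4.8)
The plan is to verify the three statements of Lemma \ref{adj-braided} by direct diagrammatic computation, reducing everything to the Hopf algebra axioms of $H$ in $\vc$ and the hexagon/naturality axioms of the braiding $\sigma$. Throughout I would work with string diagrams in the braided category $\vc$, using that $H_{\ad}=H$ as an object and that $\rho^{\ad}=m(m\ot\id)(\id_{H\ot H}\ot S)(\id_H\ot \sigma_{H,H})(\Delta\ot\id_H)$ is the braided analogue of the classical adjoint action $h\cdot x = h_{(1)} x S(h_{(2)})$, with the braiding inserted to move the second $H$-factor past $x$.

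For item 1, I would check the two module axioms $\rho^{\ad}(u\ot\id_H)=\id_H$ and $\rho^{\ad}(m\ot\id_H)=\rho^{\ad}(\id_H\ot\rho^{\ad})$. Unitality is immediate from counitality of $\Delta$, left unitality of $m$, and $S(1)=1$. Associativity of the action is the braided version of the classical identity and follows by expanding $\Delta$ via coassociativity, using the anti-comultiplicativity of $S$ in a braided setting (i.e. $\Delta S = (S\ot S)\sigma_{H,H}\Delta$), the compatibility of $\Delta$ with $m$, and naturality of $\sigma$ to slide the relevant strands; this is routine but the bookkeeping of braidings is where one must be careful. For item 3, braided commutativity of $H_{\ad}$ with respect to the half-braiding $\gamma$ means $m = m\,\gamma_{(H_{\ad},\rho^{\ad})}$, where the right-hand side uses $\gamma$ evaluated on $H_{\ad}$ itself with its adjoint-action $H$-module structure; unwinding $\gamma_{(H_{\ad},\rho^{\ad})}=(\rho^{\ad}\ot\id_H)(\id_H\ot\sigma^{-1}_{H,H})(\Delta\ot\id_H)$ and then composing with $m$ collapses, via counitality and the antipode axiom, precisely to $m$. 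Equivalently one can cite Lemma \ref{adj-braided}(2): once $(H_{\ad},\rho^{\ad})\in\Zc^\vc({}_H\vc)$ is established, commutativity as an algebra in the relative center is the statement that the half-braiding composed with the product equals the product, which is a formal consequence.

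For item 2, I would first produce the half-braiding $\gamma_{(X,\rho_X)}=(\rho_X\ot\id_H)(\id_H\ot\sigma^{-1}_{X,H})(\Delta\ot\id_X)$ and check: (a) naturality in $X$, which follows from naturality of $\sigma^{-1}$ and $H$-linearity of morphisms in ${}_H\vc$; (b) the hexagon identity \eqref{half-braid} for $\gamma$ with respect to the tensor product \eqref{module-tensor-prod} in ${}_H\vc$, which unfolds to the coassociativity of $\Delta$ together with the interaction of $\Delta$ and $\sigma$; (c) that $\gamma_{(X,\rho_X)}$ is $H$-linear, i.e. a morphism in ${}_H\vc$, where the source and target carry the tensor-product module structures — this is the braided counterpart of the Yetter-Drinfeld compatibility and is the computational crux. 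Finally, to land in $\Zc^\vc({}_H\vc)=\mug_{\Zc({}_H\vc)}(G(\vc))$, I would check the double-braiding condition $\sigma_{G(V),H_{\ad}}\gamma_{G(V)}=\id$: since $G(V)=(V_{triv},\sigma^{-1}_{-,V})$ has trivial $H$-action $\epsilon\ot\id_V$, the formula for $\gamma$ on $G(V)$ degenerates (the $\rho_X=\epsilon\ot\id_V$ kills one leg of $\Delta$) to essentially $\sigma^{-1}_{V,H}$ up to the half-braiding of $G(V)$, and composing with $\sigma_{G(V),H_{\ad}}$ gives the identity by the hexagon axioms for $\sigma$ in $\vc$.

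I expect the main obstacle to be item 2(c), verifying that $\gamma$ is genuinely a morphism in ${}_H\vc$ when both sides are equipped with the tensor-product $H$-module structure \eqref{module-tensor-prod}: this requires simultaneously juggling $\Delta$, $m$, $S$, the action $\rho_X$, and several instances of $\sigma^{\pm1}$, and it is exactly the place where the braiding of $\vc$ (as opposed to a symmetry) genuinely intervenes. Everything else — unitality, associativity of the action, the hexagon for $\gamma$, the double-braiding vanishing, and braided commutativity — reduces fairly mechanically to the braided Hopf algebra axioms once the diagrams are drawn. I would organize the write-up so that the verification in (c) is done once carefully with string diagrams, and the remaining checks are indicated as analogous.
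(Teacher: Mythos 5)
The paper actually gives no proof of Lemma~\ref{adj-braided}: the statement ends in $\qed$ and is treated as known/routine, so there is no proof in the paper to compare against. Your overall strategy --- verify each axiom by string-diagram computation using the braided Hopf algebra axioms of $H$ and naturality plus the hexagons for $\sigma$ --- is the natural one, and your identification of item~2(c) (the $H$-linearity of $\gamma$ for the tensor-product module structure \eqref{module-tensor-prod}) as the real computational content is accurate.

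There is, however, a genuine gap in the passage where you dispatch the $\Zc^\vc$-condition. You say that $\gamma_{G(V)}$ degenerates to ``essentially $\sigma^{-1}_{V,H}$'' and that composing with $\sigma_{G(V),H_{\ad}}$ gives the identity ``by the hexagon axioms for $\sigma$.'' Take the formula for $\gamma$ literally as printed: since $\rho_{triv}=\epsilon\ot\id_V$, counitality collapses $\gamma_{G(V)}$ to $(\sigma_{V,H})^{-1}:H\ot V\to V\ot H$. On the other hand the half-braiding of $G(V)=(V_{triv},\sigma^{-1}_{-,V})$ evaluated at $H_{\ad}$ is $(\sigma_{H,V})^{-1}:V\ot H\to H\ot V$. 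Their composite is $(\sigma_{V,H}\sigma_{H,V})^{-1}$, the inverse of the full monodromy of $V$ and $H$, which is \emph{not} the identity in a non-symmetric braided category and cannot be made so by the hexagon axioms. Either the printed formula for $\gamma$ should carry $\sigma_{H,X}$ rather than $(\sigma_{X,H})^{-1}$ (in which case $\gamma_{G(V)}=\sigma_{H,V}$ and the composite collapses trivially, with no appeal to the hexagons at all), or one is implicitly using a hypothesis you have not stated. You should pin down the convention before asserting the cancellation; as written, this step of your argument does not go through.

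A smaller point: in item~3 you add ``Equivalently one can cite Lemma~\ref{adj-braided}(2): once $(H_{\ad},\rho^{\ad})\in\Zc^\vc({}_H\vc)$ is established, commutativity ... is a formal consequence.'' That is not correct; membership in $\Zc^\vc({}_H\vc)$ does not make an algebra braided commutative (the relative center contains plenty of non-commutative algebras). Braided commutativity is precisely the identity $m=m\,\gamma_{(H_{\ad},\rho^{\ad})}$ and must be checked as such --- which is what your first, diagrammatic, argument in item~3 does. Drop the ``formal consequence'' remark and keep only the direct verification.
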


The category ${}_H\vc$ is a module category over itself, with action given by the tensor product. We shall compute the algebra $A_{\vc, {}_H\vc}.$
The internal Hom, in this example, is the following functor:
$$ \uhom(-,-): {}_H\vc\times {}_H\vc\to {}_H\vc, \ \  \uhom(M,N)=N\ot M^*.$$
The prebalancing $\beta$ defined in \eqref{prebalancing-V}, in this particular case, for any $M,N\in {}_H\vc$ is given by
$$\beta^V_{M,N}: V_{triv}\ot N\ot M^*\to N\ot (V^*_{triv}\ot M)^*,$$
$$ \beta^V_{M,N}=\big(\ev_V\ot \id_N\ot \ev_M\ot \id_{(V^*\ot M)^*}\big)\big(\sigma_{V\ot N\ot M^*, V^*}\ot \id\big)\big(\id\ot \coev_{V^*\ot M}\big).$$

\begin{prop} Using the above notation, the following statements hold.
\begin{itemize}
    \item[1.]  For any $(X,\rho)\in {}_H\vc$, the maps $\pi_X:H_{\ad}\to X\ot X^*$, $\pi_X=(\rho_X\ot \id_{X^*})(\id_H\ot \coev_X)$ are dinatural morphisms in ${}_H\vc$ and Equation \eqref{dinat:end:module:left} is fulfilled. 

    \item[2.] There exists an isomorphism of algebras $A_{\vc,{}_H\vc}\simeq H_{\ad}$ in $\Zc({}_H\vc)$.
\end{itemize}\qed
    
\end{prop}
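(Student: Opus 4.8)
The plan is to verify part (1) by direct computation in the braided category $\vc$, and then to deduce part (2) by combining part (1) with the universal property of the relative end and the results already established (Lemma \ref{adj-braided}, Claim \ref{lambda-comp}, and the description of the half-braiding in Corollary \ref{def-relat-lag}).

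For part (1), I would first check that each $\pi_X:H_{\ad}\to X\ot X^*=\uhom(X,X)$ is a morphism in ${}_H\vc$, i.e.\ that it is $H$-linear. This is a matter of comparing the $H$-action on $H_{\ad}$, namely $\rho^{\ad}=m(m\ot\id)(\id_{H\ot H}\ot S)(\id_H\ot\sigma_{H,H})(\Delta\ot\id_H)$, with the $H$-action on $X\ot X^*$ obtained from \eqref{module-tensor-prod} and \eqref{duals-in-braided-Hopf}; the rigidity axioms (zig-zag identities) and the fact that $(X,\rho_X)$ is an $H$-module reduce both sides to the same string diagram, essentially the defining property of the adjoint action. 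Next I would check dinaturality of $\pi_X$ in $X$: for a morphism $f:X\to Y$ in ${}_H\vc$ one must show $(\id_Y\ot f^*)\pi_Y=(f\ot\id_{X^*})\pi_X$ as maps $H_{\ad}\to Y\ot X^*$; this again follows from naturality of $\coev$ and the zig-zag relations. Finally, the crucial computation is that Equation \eqref{dinat:end:module:left} holds — here for $\Mo={}_H\vc$ acting on itself, $F=\Id$, $\uhom(M,M)=M\ot M^*$, and the prebalancing $\beta^V_{M,N}$ is the one displayed just before the Proposition. One substitutes $\pi_M=(\rho_M\ot\id_{M^*})(\id_H\ot\coev_M)$ into both sides of \eqref{dinat:end:module:left} and unwinds: the left side involves $\uhom(\ev_V\blacktriangleright\id_M,\id_M)$, the right side the composite $\beta^V_{V\blacktriangleright M,M}\,\pi_{V\blacktriangleright M}$, where $V\blacktriangleright M=G(V)\triangleright M=V_{triv}\ot M$ with $G(V)=(V_{triv},\sigma^{-1}_{-,V})$. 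The point where the braiding $\sigma$ of $\vc$ genuinely enters is in the half-braiding of $G(V)$ (which is $\sigma^{-1}$, not the trivial one) and in the appearance of $\sigma_{V\ot N\ot M^*,V^*}$ inside $\beta$; after cancelling a $\coev/\ev$ zig-zag the two sides become the same diagram because the $H$-action $\rho_{V_{triv}\ot M}$ restricted through the trivial $H$-module $V_{triv}$ collapses via $\epsilon$, and the remaining braidings match those prescribed by $\beta$. This string-diagram verification is the main obstacle: it is the one place where one must be careful that the prebalancing chosen in \eqref{prebalancing-V} agrees, in this concrete example, with the explicit formula stated, and that the braidings line up with those coming from $G(V)=(V_{triv},\sigma^{-1}_{-,V})$.

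For part (2), once part (1) is known, the dinatural family $(\pi_X)_{X\in{}_H\vc}$ from $H_{\ad}$ satisfying \eqref{dinat:end:module:left} induces, by the universal property of the relative end $A_{\vc,{}_H\vc}=\oint_{M}\uhom(M,M)$, a unique morphism $\Theta:H_{\ad}\to A_{\vc,{}_H\vc}$ in ${}_H\vc$ with $\lambda^{\Id}_M\Theta=\pi_M$. To see $\Theta$ is an isomorphism, I would argue that $\pi_\bullet$ is a \emph{universal} dinatural family: since $H={}_H\vc$-mod over itself has a generator (the regular module $H$), the relative end can be computed as an equalizer built from $\uhom(H,H)=H\ot H^*$, and the adjoint-algebra presentation of Lemma \ref{adj-braided} exhibits $H_{\ad}$ precisely as that equalizer — this is the braided analogue of the classical fact that $\int^{X}X\ot X^*$ computed via the regular object recovers the (co)adjoint object. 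Alternatively, and perhaps more cleanly, one invokes Corollary \ref{central-hm}: with $\Mo=\ca={}_H\vc$ the adjunction $(\Zc_\ca(\rho),\Zc_\ca(\bar\rho))$ is monadic with Hopf monad $T(X)=\oint_{Y}Y\ot X\ot Y^*$, so $A_{\vc,{}_H\vc}=T(\uno)$, and one checks directly that $T(\uno)\cong H_{\ad}$ as objects with half-braiding using the explicit $\beta$; then Claim \ref{lambda-comp} identifies the multiplication $m=\mu_{\Id,\Id}$ on $A_{\vc,{}_H\vc}$ with $\underline{\comp}$ transported through the $\pi$'s, which under the identification $\Theta$ becomes exactly the product of $H$, and $\varepsilon$ becomes the unit $u$ of $H$. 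Compatibility of $\Theta$ with the half-braidings is where part (1)'s Equation \eqref{dinat:end:module:left} is used again together with \eqref{sigma-pi}: the half-braiding $\sigma_X$ on $A_{\vc,{}_H\vc}$ is characterized by \eqref{sigma-pi}, and substituting $\pi_M=\lambda^{\Id}_M\Theta$ together with the formula $\gamma_{(X,\rho_X)}$ from Lemma \ref{adj-braided}(2) for the half-braiding of $H_{\ad}$ shows both satisfy the same characterizing equation, hence agree. This yields the asserted isomorphism of algebras in $\Zc({}_H\vc)$.
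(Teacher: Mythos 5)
The paper itself gives no proof of this Proposition (it is stated with a terminal \qed), so there is nothing to compare against directly; I evaluate your sketch on its own.

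Your overall plan — verify $\pi_\bullet$ is a wedge for the relative end, then invoke the universal property to get $\Theta:H_{\ad}\to A_{\vc,{}_H\vc}$, and then transport the algebra and half-braiding structures — is the right shape for this statement, and the part (1) outline (checking $H$-linearity, ordinary dinaturality, and the prebalancing identity \eqref{dinat:end:module:left}) is a plausible diagram chase. One small remark there: the ordinary dinaturality identity $(\id_Y\ot f^*)\pi_Y=(f\ot\id_{X^*})\pi_X$ uses not only naturality of $\coev$ and the zig-zag relations but also the $H$-linearity of $f$ (to slide $f$ past the action $\rho_X$); you should say so.

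The genuine gap is in part (2): the universal property only hands you a \emph{morphism} $\Theta:H_{\ad}\to A_{\vc,{}_H\vc}$ with $\lambda^{\Id}_M\Theta=\pi_M$, and you offer two ways to see it is an isomorphism, neither of which works as written. Option (a) claims the relative end is computed as an equalizer from $\uhom(H,H)=H\ot H^*$ and that Lemma \ref{adj-braided} exhibits $H_{\ad}$ as that equalizer; but the classical generator-equalizer presentation of an end does not transfer to the \emph{relative} end, because the defining conditions here are \eqref{dinat:end:module:left} (the $\vc$-prebalancing constraints), not just ordinary dinaturality, and Lemma \ref{adj-braided} does not present $H_{\ad}$ as any equalizer — it only records the module, central, and algebra structures of $H_{\ad}$. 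Option (b) reduces to ``one checks directly that $T(\uno)\cong H_{\ad}$,'' which is exactly the assertion being proved, so it adds nothing. What is actually needed is either a proof that $\pi_\bullet$ is a \emph{universal} wedge, or an explicit inverse. Concretely: observe that $(\id_H\ot\eta^*)\pi_H=\id_H$ (where $\eta:\uno\to H$ is the unit), so $\Theta$ is split monic; then, given any wedge $\xi_M:\widetilde E\to M\ot M^*$ satisfying \eqref{dinat:end:module:left}, set $h=(\id_H\ot\eta^*)\xi_H:\widetilde E\to H_{\ad}$ and verify that $\pi_M h=\xi_M$ for all $M$ (and that $h$ is $H$-linear). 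That verification — which uses that the action maps $H\ot M_0\to M$ are $H$-linear epimorphisms together with the prebalancing condition — is the missing content; it is where the adjoint action $\rho^{\ad}$ genuinely enters. Once that is done, the rest of your part (2) (identifying the product via Claim \ref{lambda-comp} and the half-braiding via \eqref{sigma-pi} and Lemma \ref{adj-braided}(2)) goes through as you describe.
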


\subsection{Comparison with Laugwitz-Walton braided commutative algebras  }
In this Section, we prove that the braided commutative algebras $A_{\vc,\Mo}$ coincide with algebras  constructed in \cite{LW3}.
Let $\vc$ be a braided tensor category, $\ca$ a $\vc$-central tensor category via the right exact functor $G:\vc\to \Zc(\ca)$, and $\Mo$ a $\ca$-module category with action afforded by the functor $\rho:\ca\to \End(\Mo)$.
\medbreak

Recall, from Corollary \ref{unit-counit}, that the evaluation of the adjunction $(\rho,\bar{\rho}) $ applied to the identity functor $\Id_\Mo$ gives natural transformations 
$$e_M: A_{\vc,\Mo}\triangleright M \to M,\quad
e_M=\phi^{\uhom(M,M)}_{M,F(M)}(\id)(\pi_M \triangleright\id_M).$$

The next result and the contents  of \cite[Section 3.5]{LW3} imply that, the algebras constructed in {\it loc. cit.} coincide with algebras $A_{\vc,\Mo}$.
\begin{prop}\label{CW-algebras} The pair $(A_{\vc,\Mo},e)$ is a terminal object in the category of pairs $((Z,\sigma^Z),z)$, where $(Z,\sigma^Z)\in \Zc^\vc(\ca)$ and $z_M:Z\triangleright M\to M$ is a natural transformation that satisfies for any $X\in \ca, M\in \Mo$
\begin{align}\label{b-terminal}
\xymatrix@R=1.5pc@C=3pc{(Z \otimes X) \triangleright M \ar[rr]^-{\sigma^Z_X \triangleright\text{Id}_M}\ar[d]_{m_{Z,X,M}}&&
(X \otimes Z) \triangleright M 
\ar[d]^{m_{X,Z,M}}\\
Z \triangleright (X \triangleright M) \ar[rd]_{z_{X \triangleright M}}&& X \triangleright (Z \triangleright M) \ar[ld]^{~\text{id}_X \triangleright z_M}\\
&X \triangleright  M&
}
\end{align}

\end{prop}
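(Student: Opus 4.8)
The plan is to establish the universal property of $(A_{\vc,\Mo}, e)$ directly, by comparing it with the universal property of the relative end defining $\bar\rho(\Id_\Mo)$, in parallel with the argument of \cite[Theorem 4.9]{Sh2}. First I would recall that for an object $(Z,\sigma^Z)\in\Zc^\vc(\ca)$ together with a natural transformation $z_M:Z\triangleright M\to M$, the data of $z$ is the same, via the internal-Hom adjunction, as a collection of morphisms $\widehat z_M:=\psi^Z_{M,M}(z_M):Z\to\uhom(M,M)$; the commutativity of diagram \eqref{b-terminal} translates, using \eqref{phi-2}, \eqref{psi-2}, the definition of $\mathfrak a$, $\mathfrak b$, and Lemma \ref{comp, a and b}, into precisely the condition that the $\widehat z_M$ are dinatural with respect to the prebalancing $\beta$, i.e. they satisfy \eqref{dinat:end:module:left}. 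Hence the universal property of the relative end $\bar\rho(\Id_\Mo)=\oint_{M}\uhom(M,M)$ produces, for each such pair, a unique morphism $\widehat z:Z\to A_{\vc,\Mo}$ in $\ca$ with $\pi_M\circ\widehat z=\widehat z_M$, and then one sets $z=e\circ(\widehat z\triangleright\id)$, so that $(A_{\vc,\Mo},e)$ is terminal \emph{in $\ca$}. This is the ``easy half'', and is essentially Shimizu's argument with $\oint$ replacing $\int$.

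The substantive point is to upgrade this to a statement about morphisms in $\Zc^\vc(\ca)$: one must check that the canonical morphism $\widehat z:Z\to A_{\vc,\Mo}$ is a morphism of half-braidings, i.e. intertwines $\sigma^Z$ with the half-braiding $\sigma$ on $A_{\vc,\Mo}$ described in Corollary \ref{def-relat-lag}(1). Here I would use the characterization of $\sigma$ by the commutative square \eqref{sigma-pi} together with Lemma \ref{l,r,a,b}. Precomposing the desired identity $\sigma_X\circ(\widehat z\otimes\id_X)=(\id_X\otimes\widehat z)\circ\sigma^Z_X$ with $\mathfrak a^{-1}_{X,M,M}\circ(\id_X\otimes\pi_M)\circ(\text{both sides})$ and using \eqref{sigma-pi} to rewrite the left-hand side in terms of $\mathfrak b_{X,M,X\triangleright M}\circ(\pi_{X\triangleright M}\otimes\id_X)$, the identity reduces to a statement only involving the $\widehat z_M$, the $\pi_M$, and the braiding of $Z$ inside $\Zc(\ca)$. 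That statement, in turn, is exactly the translation (again via $\psi$, $\phi$, $\mathfrak a$, $\mathfrak b$) of diagram \eqref{b-terminal}, which we already know $z$ satisfies because $(Z,\sigma^Z)$ is assumed to live in $\Zc^\vc(\ca)$ and $z$ is assumed to satisfy \eqref{b-terminal}. So the half-braiding compatibility is not an extra hypothesis but a consequence; this is the conceptual content of Theorem \ref{in-the-centralizer}, applied to $\Id_\Mo$.

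I expect the main obstacle to be the bookkeeping in this last reduction: translating diagram \eqref{b-terminal} into an identity of morphisms $Z\otimes X\to\uhom(M,X\triangleright M)$ and matching it term-by-term against what \eqref{sigma-pi} and Lemma \ref{l,r,a,b} produce, keeping careful track of where the half-braiding $\sigma^Z_X$ enters versus where the module associativity $m$ enters. The rigidity manipulations and the naturality relations \eqref{phi-2}, \eqref{psi-2} make each step routine, but assembling them correctly — and in particular checking that the resulting $\widehat z$ is also an algebra morphism (which follows from Claim \ref{lambda-comp}, since $\widehat z_M\mapsto\underline{\comp}$ behaves compatibly with $\mu_{\Id,\Id}$ and $e$ being unital corresponds to $\lambda^{\Id}_M\varepsilon=\underline{coev}_{\uno,M}$) — is where the care is needed. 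Uniqueness in $\Zc^\vc(\ca)$ is then immediate from uniqueness in $\ca$, since the forgetful functor $\Zc^\vc(\ca)\to\ca$ is faithful.
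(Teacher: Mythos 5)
The paper's proof is a one-line representability argument: the category of pairs $((Z,\sigma^Z),z)$ subject to \eqref{b-terminal} is precisely the category of elements of the contravariant functor $(Z,\sigma^Z)\mapsto\Nat_{\!m}\bigl(\Zc_\ca(\rho)(Z,\sigma^Z),\Id_\Mo\bigr)$ on $\Zc^\vc(\ca)$, and the adjunction $(\Zc_\ca(\rho),\Zc_\ca(\bar\rho))$ shows that $A_{\vc,\Mo}=\bar\rho(\Id_\Mo)$ represents this functor, hence is terminal in the category of elements. Your proof reaches the same conclusion but unpacks the representability by hand: first you transpose $z_M$ to $\widehat z_M=\psi^Z_{M,M}(z_M)$, use \eqref{b-terminal} restricted to $X\in G(\vc)$ to get the dinaturality \eqref{dinat:end:module:left} and hence a unique $\widehat z$ in $\ca$; then you use \eqref{b-terminal} for arbitrary $X\in\ca$, together with \eqref{sigma-pi} and Lemma \ref{l,r,a,b}, to verify $\widehat z$ intertwines half-braidings. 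That second step is where your route diverges visibly from the paper's: the paper never needs to check half-braiding compatibility separately, because it works directly with the adjunction at the level of $\Zc^\vc(\ca)\rightleftarrows\ca(\vc,\Mo)$, whereas you descend to $\ca$ and climb back up. Both are correct; your version makes the role of Theorem \ref{in-the-centralizer} explicit at the cost of more bookkeeping, while the paper's is shorter but opaque.

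Two small points. When you write ``one sets $z=e\circ(\widehat z\triangleright\id)$'', you mean of course that this identity is \emph{verified} (it unwinds from $\pi_M\widehat z=\widehat z_M$ via the formula for $e_M$ in Corollary \ref{unit-counit} and the naturality \eqref{phi-2}); $z$ is given data, not defined. And the aside about $\widehat z$ being an algebra morphism is not needed: the Proposition asserts terminality in a category whose morphisms are just $\Zc^\vc(\ca)$-morphisms making the triangle over $z$ commute, with no algebra condition imposed; Claim \ref{lambda-comp} is relevant to part (2) of Corollary \ref{def-relat-lag}, not here.
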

\pf One can check that the terminal object represents  the functor $\Nat_m(\rho(-),\Id_\Mo)$, which in our case is by definition $A_{\vc,\Mo}$.
\epf

\section{Computations in bosonization Hopf algebras}\label{Section:bosonhopf}

 Let $(T, R)$ be a  finite dimensional quasitriangular Hopf algebra and $H\in  \Rep(T)$ a finite dimensional braided Hopf algebra. Hence $H$ is a Hopf algebra in the Yetter-Drinfeld category ${}^T_T\YD$. Thus we can consider the usual Hopf algebra $H\# T$ constructed by bosonization.
Recall that $\pi:H\# T\to T  $ denotes the canonical projection \eqref{can-proj}. If $P\in {}^{H\# T}\mathcal{M}$, we shall denote by $P^\pi$ the object in ${}^{ T}\mathcal{M}$ induced by $\pi.$ Similarly, if $V\in \Rep(T)$, we shall denote $V_\pi$ the $H\# T$-module induced by $\pi.$

\medbreak
If $(T,R)$ is a quasitriangular Hopf algebra, then $(T,R_{21}^{-1})$ is also a quasitriangular Hopf algebra. We shall denote $\overline{R}=R_{21}^{-1}$. The following is \cite[Example 4.6]{LW2}.

\begin{teo} Let  $(T,R)$ be a quasitriangular Hopf algebra, $H\in \Rep(T,R)$ a braided Hopf algebra. 
The category $\Rep(H\# T)$ is $\Rep(T,\overline{R})$-central 
with braided tensor functor given by
\begin{equation}\label{Gfunctor}
        G:\Rep(T)\to \mathcal Z(\Rep(H\# T)),\ \
        V\mapsto (V_\pi,\sigma^{V}),
    \end{equation}

where the half-braiding is, for $W\in\Rep(H\#T),$
\begin{align}\label{braid-bar}
 \sigma^{V}_W:V\otk W&\to W\otk V, &
\sigma^{V}_W(v\ot w)&=(1\# R^{-1})\cdot w\ot R^{-2}\cdot v.
\end{align}
\end{teo}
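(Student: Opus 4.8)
The plan is to verify directly that the functor $G$ described in \eqref{Gfunctor}, together with the half-braiding \eqref{braid-bar}, is a well-defined braided tensor functor landing in $\Zc(\Rep(H\# T))$, and that its image lies in $\Rep(T,\overline R)$-central position, i.e.\ that the resulting braiding on $G(\vc)$ matches the one coming from $\overline R = R_{21}^{-1}$. The key observation making the formula plausible is that, via the canonical projection $\pi: H\# T\to T$, every $T$-module $V$ becomes an $H\# T$-module $V_\pi$ on which $H$ acts trivially; and the half-braiding with an arbitrary $W\in\Rep(H\# T)$ should be built out of the $R$-matrix of $T$ transported along $\pi$.

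First I would check that $\sigma^V_W$ as defined is $H\# T$-linear in both variables. For this one uses that $(1\# R^{-1})\in (H\# T)\otk (H\# T)$ is invertible (since $1\# -$ is an algebra map and $R$ is invertible), and that the comultiplication of the bosonization restricted to the image of $T$ behaves compatibly; concretely one must use the quasitriangularity axioms $\Delta^{\cop}(x) = R\Delta(x)R^{-1}$ pushed through $1\#-$, together with the fact that $H$ acts as $\epsilon$ on $V_\pi$. Second, I would verify the two hexagon-type identities for a half-braiding, i.e.\ the analogue of \eqref{braiding1} and its companion for $\sigma^V_{-}$: these reduce, after unwinding, to the identities $(\Delta\otimes\id)(R) = R_{13}R_{23}$ and $(\id\otimes\Delta)(R) = R_{13}R_{12}$ for $R^{-1}$ inside $H\# T$, which hold because $1\#-: T\to H\# T$ is a Hopf algebra morphism. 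This shows $G(V) = (V_\pi,\sigma^V)\in\Zc(\Rep(H\# T))$, and functoriality in $V$ plus the monoidal structure (which is the obvious one, since $(V\otk W)_\pi = V_\pi\otk W_\pi$ as $H\# T$-modules, the $H$-part acting by $\epsilon$) are routine.

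Third, and this is where the specific choice $\overline R = R_{21}^{-1}$ enters, I would compute the composite $\sigma^V_{W}\circ\sigma^W_V$ when $W = G(W')$ is itself in the image of $G$ — equivalently the braiding that $\Rep(T)$ inherits through $G$ as a full subcategory of $\Zc(\Rep(H\# T))$. Evaluating $\sigma^V_{W_\pi}(v\ot w) = (1\#R^{-1})\cdot w\ot R^{-2}\cdot v$ on modules induced from $T$, the action of $1\#R^{-1}$ collapses (the $H$-component is trivial) to the action of $R^{-1}\in T\otk T$, so the braiding on $G(\vc)$ becomes $v\ot w\mapsto R^{-2}\cdot w\ot R^{-1}\cdot v$. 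Writing this out, $v\ot w\mapsto (R^{-1})^2\cdot w\ot (R^{-1})^1\cdot v$, and comparing with the braiding formula $\sigma_{V,W}(v\ot w) = S^2\cdot w\ot S^1\cdot v$ for an $R$-matrix $S$, one identifies $S = R^{-1}_{21} = \overline R$ (using the standard flip/inverse bookkeeping and $(\Sc\otimes\Sc)(R)=R$ if needed). Hence $G$ is braided from $(\Rep(T),\overline R)$, not from $(\Rep(T),R)$.

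I expect the main obstacle to be the $H\# T$-linearity check of $\sigma^V_W$ in the variable $W$: the coproduct of the bosonization, $\Delta(h\# t) = h\_1\# R^2 t\_1\ot R^1\cdot h\_2\# t\_2$, mixes $H$ and $T$ in a way that requires careful use of the Yetter-Drinfeld coaction $\lambda(h) = R^2\ot R^1\cdot h$ of $H$ over $T$, and one has to see precisely how the factor $(1\#R^{-1})$ absorbs the extra $R$-terms that appear when commuting an element $h\# t$ past it. I would organize this by first treating elements of the form $1\# t$ (where it is essentially the quasitriangularity of $T$) and then elements of the form $h\# 1$ (where triviality of the $H$-action on $V_\pi$ and the formula for $\Delta$ on $H$ do the work), and finally combine. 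The rest — hexagons, functoriality, monoidality, and the identification of $\overline R$ — should follow by bookkeeping with the $R$-matrix axioms already recalled in Section 1, together with \cite[Example 4.6]{LW2} and \cite[Theorem 2]{Rad} for the structural input.
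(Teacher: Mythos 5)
Your plan matches the paper's approach: the paper's entire proof is a direct verification that $\sigma^V_W$ is $H\#T$-linear, done in a single computation via the bosonization coproduct $\Delta(x\#t)=x_1\# R^2 t_1\ot R^1\cdot x_2\#t_2$ and the relation $\Delta^{\cop}(t)=R\Delta(t)R^{-1}$, with all remaining structure (hexagons, monoidality, braidedness) deferred to the cited source; your $1\#t$ vs.\ $h\#1$ split carries out the same computation in two stages. One bookkeeping slip worth fixing in your final identification: on modules coming from $T$, $\sigma^V_{W_\pi}(v\ot w)=(R^{-1})^1\cdot w\ot (R^{-1})^2\cdot v$ (first leg of $R^{-1}$ on $w$, second on $v$), not $(R^{-1})^2\cdot w\ot (R^{-1})^1\cdot v$ as you wrote; only with the correct ordering does the match against $\sigma_S(v\ot w)=S^2\cdot w\ot S^1\cdot v$ force $S=R^{-1}_{21}=\overline R$ rather than $S=R^{-1}$.
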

\begin{proof}
    We only need to prove that $\sigma^{V}_W$ is a $H\#T$-module morphism. Indeed, if $x\in H, t\in T, v\in V, w\in W,$ then
    \begin{align*}
      \sigma^V_W((x\#t)\cdot (v\otimes w))&=(1\#R^{-1})(R^1\cdot x\#t_2)\cdot w\otimes R^{-2}R^2t_1\cdot v =(x\#R^{-1}t_2)\cdot w\otimes R^{-2}t_1\cdot v,\\
      (x\#t)\cdot\sigma^V_W(v\otimes w)&=(x_1\#R^2t_1)(1\#R^{-1})\cdot w\otimes \epsilon(R^1)\epsilon(x_2)t_2R^{-2}\cdot v\\
      &=(x\#t_1)(1\#R^{-1})\cdot w\otimes t_2R^{-2}\cdot v=(x\#t_1R^{-1})\cdot w\otimes t_2R^{-2}\cdot v.
    \end{align*}
    The equality in both equations follow from properties of the $R$-matrix.  
\end{proof}

Let $\Mo$ be an exact indecomposable module category over $\Rep(H\# T)$. It follows from \cite[Prop.1.20]{AM} that there exists a right $H\# T$-simple left comodule algebra $K$, such that $\Mo\simeq {}_K\Mo$ as module categories. Recall that ${}_K\Mo$ is a $\Rep(T)$-module category 
via the functor $G:\Rep(T)\to \mathcal Z(\Rep(H\# T))$.  One can see that this module category is equivalent to ${}_{K^{\pi}}\Mo$ as $\Rep(T)$-module categories. In particular, it follows from \eqref{equiv-modfunct} that there are  equivalence of categories
$$ \End_{\Rep(H\# T)}(_K\Mo)\simeq {}_{\quad K}^{H\# T}\mathcal{M}_K, \ \ \End_{\Rep(T)}({}_K\Mo)\simeq {}_{K^{\pi}}^{\;\;\; T}\mathcal{M}_{K^{\pi}}.$$ 

We explained in Lemma \ref{rho-in-V-modfunct} that the action functor $\Rep(H\# T)\to \End({}_K\Mo)$ determines a functor 
\begin{align*}
    \rho:\Rep(H\# T)&\to \End_{\Rep(T)}({}_K\Mo)\\
    X&\mapsto \big( (X\otk K)\otimes_K -, c \big)
\end{align*}
where $c_{W,M}:(X\otk K)\otimes_K (W\otk M)\to W\otk ((X\otk K)\otimes_K M)$ is given by $c_{W,M}=(\sigma_{X\otk K}^{W})^{-1}\otimes_K \id_M$, $W\in \Rep(T), M\in {}_K\Mo$, and $\sigma^W$ is the half-braiding given by \eqref{braid-bar}. Recall that, if $X\in \Rep(H\# T)$ then $X\otk K$ has a left $T$-comodule structure, see Equation \eqref{lambda}, given by
\begin{align}\label{coaction-R}
    \lambda_X:X\otk K&\to T\otk (X\otk K),&
    x\otimes k&\mapsto R^{2}\otimes (1\#  R^{1})\cdot x\otimes k.
\end{align}
Hence, in our case, the functor $\rho$ 
writes as $
\rho:\Rep(H\# T)\to {}_{K^{\pi}}^T\mathcal{M}_{K^{\pi}},$ $\rho(X)=(X\otk K,\lambda_X)$, 
where the $K^\pi$-bimodule structure is given by, for $k,l\in K^\pi,x\in X$
\begin{align*}
    k\cdot (x\otimes l)= k\_{-1}\cdot x\otimes k\_0 l,\quad (x\otimes l)\cdot k= x\otimes lk.
\end{align*}

\medbreak
Our main objective in this section is to explicitly compute the right adjoint of the functor
\begin{align*}
    \Zc(\rho):\Zc^{\Rep(T)}(\Rep(H\# T))&\to  {}_{\;\;\;\;\;\; K}^{H\# T}\mathcal{M}_K.
\end{align*}

\begin{rmk} Recall that we have the functor $\Zc(\rho):\Zc(\ca)\to \Zc_{\ca}(\End_\vc(\Mo))$. When we restrict this functor to $\Zc^\vc(\ca)$ we get the functor (Lemma \ref{rho-relative})
$$\Zc(\rho):\Zc^\vc(\ca)\to \ca(\vc,\Mo). $$
Also Lemma \ref{dual-equiv-2s} explain an equivalence of tensor categories $\ca(\vc,\Mo)\simeq \End_{\ca}(\Mo)$. Thus, if $K$ is a left $H\# T$-comodule algebra, we have an equivalence of categories $ \ca(\Rep(T), {}_K\Mo)\simeq {}_{\;\;\;\; K}^{H\# T}\mathcal{M}_K$. 
\end{rmk}

From now on, we shall freely use that the forgetful functor $ \Zc_\ca(\End_\vc(\Mo))\to \End_\vc(\Mo),$ in our case, coincides with the functor
$$ {}_{\;\;\;\;\;\; K}^{H\# T}\mathcal{M}_K\to {}_{K^\pi}^T\mathcal{M}_{K^\pi}, \ \ P\mapsto P^\pi.$$

\begin{defi}\label{S^T}
    For any $P\in {}_{K^\pi}^T\mathcal{M}_{K^\pi}$  define $S^T(H,K,P)$ as the subspace of $\Hom_K((H\# T )\otk K,P)$ consisting of $T$-comodule morphisms $\alpha: (H\# T )\otk K\to P$  such that 
    $\alpha(h\otimes k)=\alpha(h\otimes 1)\cdot k,$ for any $h\in H\#T, k\in K.$ If $P\in {}_{\;\;\;\;\;\; K}^{H\# T}\mathcal{M}_K$ we shall denote $S^T(H,K,P)=S^T(H,K,P^{\pi}).$
     
\end{defi}

\begin{rmk}
   If $P\in {}_{\;\;\;\;\;\; K}^{H\# T}\mathcal{M}_K$, an element $\alpha\in S^T(H,K,P)$ is a $T$-comodule  map. Hence\begin{equation}\label{alpha-t-comod} R^{2}\ot \alpha((1\#R^{1})x\ot k)=\pi(\alpha(x\ot k)\_{-1})\ot \alpha(x\ot k)\_{0},
\end{equation}
is fullfied for any $x\in H\# T,$ $k\in K.$
\end{rmk}

\begin{rmk} In the case $T=\ku 1$ is the trivial quasitriangular Hopf algebra, objects $S^T(H,K,P)$ were considered in \cite[Definition 4.4]{BM0}.     
\end{rmk}

\begin{teo}
 Let $(T,R)$ be a quasitriangular Hopf algebra, $H$ be a  finite-dimensional braided Hopf algebra in $\Rep(T)$, and let $K$ be a left $H\# T$-comodule algebra. The functor
 \begin{align*}
    \overline{\rho}: {}_K^T\mathcal{M}_K&\to \Rep(H\# T)\\
    P&\mapsto S^T(H,K,P)\\
    (f:P\to Q)&\mapsto (\alpha\mapsto f\circ\alpha).
    \end{align*}
    is a right adjoint of the functor $\rho:\Rep(H\# T)\to {}_{K^{\pi}}^T\mathcal{M}_{K^{\pi}}$.  
\end{teo}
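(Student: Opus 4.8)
The plan is to deduce this theorem as a concrete incarnation of Theorem~\ref{adjoint-structure-end}, rather than to verify the adjunction by hand. By Theorem~\ref{adjoint-structure-end}, the abstract right adjoint of $\rho:\Rep(H\#T)\to\End_{\Rep(T)}({}_K\Mo)$ is $\bar\rho(F,d)=\oint_{M\in{}_K\Mo}(\uhom(M,F(M)),\beta)$; under the equivalence $\End_{\Rep(T)}({}_K\Mo)\simeq {}^{\,T}_{K^\pi}\mathcal M_{K^\pi}$ of~\eqref{equiv-modfunct}, a $\vc$-module endofunctor $F$ is identified with a $K^\pi$-bimodule $P$ carrying a compatible $T$-comodule structure, with $F=P\otimes_{K}-$. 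So what I must do is: (i) translate the internal Hom and the relative end into this language, and (ii) identify the resulting vector space with $S^T(H,K,P)$, carrying the appropriate $H\#T$-module structure.

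First I would recall, from~\eqref{int-hom-hopf}, that for the module category ${}_{K}\Mo$ over $\Rep(H\#T)$ one has $\uhom(M,N)\simeq\Hom_K((H\#T)\otimes_\ku M,N)$ as $H\#T$-modules, with the action $(x\cdot\alpha)(y\otimes m)=\alpha(yx\otimes m)$. Hence for $F=P\otimes_K-$ the functor $\uhom(M,F(M))$ is $\Hom_K((H\#T)\otimes_\ku M,\,P\otimes_K M)$. Next I would invoke Proposition~\ref{results-on-coend}: the relative end $\oint_{M}(\uhom(M,F(M)),\beta)$ can be described (using item~4 and the adjunction between $\uhom$ and the action functor) as the subobject of $\uhom(\Id_\Mo,F)=\Nat_m(\Id,F)$-type data. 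More directly, the dinaturality condition~\eqref{dinat:end:module:left} defining the relative end, once the prebalancing $\beta$ is written out explicitly via~\eqref{prebalancing-V} and the half-braiding~\eqref{braid-bar} of $G(V)=V_\pi$, should translate exactly into the two defining conditions of $S^T(H,K,P)$: the condition $\alpha(x\otimes k)=\alpha(x\otimes 1)k$ expresses the $K$-linearity/dinaturality over the action of $\Rep(T)$ on the right, and the $T$-comodule condition~\eqref{alpha-t-comod} is precisely what the prebalancing $\beta$ forces, since $\beta$ is built from the half-braiding $\sigma^V_W(v\otimes w)=(1\#R^{-1})\cdot w\otimes R^{-2}\cdot v$, whose appearance of the $R$-matrix is exactly the source of the $T$-coaction constraint. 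I would check that the forgetful functor to $\End_{\Rep(T)}({}_K\Mo)$ corresponds to $P\mapsto P^\pi$ as already noted, so that the $T$-comodule structure on $P$ is the relevant datum in $\beta$.

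Concretely the steps are: (1) write $\uhom(M,F(M))=\Hom_K((H\#T)\otimes_\ku M,P\otimes_K M)$ and identify the dinatural family $\lambda^F_M$ with ``evaluation at $M$'' of an element $\alpha\in\Hom_K((H\#T)\otimes_\ku K,P)$ via $m\mapsto \alpha(-\otimes 1)\otimes_K m$; (2) specialize the dinaturality axiom~\eqref{dinat:end:module:left} for $X\in\Rep(H\#T)$ of the form $X=$ a one-dimensional piece, reducing it, by naturality and the explicit formula for $\beta$ in the present setting, to the displayed $K$-linearity and $T$-comodule conditions — this is essentially the bosonization analogue of the computation sketched for $T=\ku1$ in \cite[Def.~4.4]{BM0}; (3) conclude that $\bar\rho(P)=S^T(H,K,P)$ as a vector space, and transport the $H\#T$-module structure on $\uhom(\ )$ through the identification to obtain $(x\cdot\alpha)(y\otimes k)=\alpha(yx\otimes k)$; (4) observe that $\bar\rho$ on morphisms is $f\mapsto f\circ(-)$ directly from~\eqref{barho-on-morphs1}. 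The adjunction itself is then inherited from Theorem~\ref{adjoint-structure-end}, with unit and counit read off from Corollary~\ref{unit-counit}.

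The main obstacle I anticipate is step~(2): unwinding the prebalancing $\beta^V_{M,N}$ of~\eqref{prebalancing-V} through the chain of definitions~\eqref{def-gamma}--\eqref{relat-2-prebal}, and through the explicit internal-Hom isomorphisms~\eqref{phi-psi-hopf}, so as to see cleanly that dinaturality over $\Rep(T)\subseteq\Rep(H\#T)$ collapses to the single identity~\eqref{alpha-t-comod} with the $R$-matrix in the position shown. One has to be careful that it is $\overline R=R_{21}^{-1}$ (equivalently the inverse braiding) that governs the half-braiding of $G(V)$, which is why $R^{-1}$ and $R^{-2}$, rather than $R^1,R^2$, appear; keeping track of antipodes and the switch $R\leftrightarrow R^{-1}$ in~\eqref{coaction-R} is the delicate bookkeeping. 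The remaining conditions ($K$-linearity, $H\#T$-action) are comparatively formal consequences of the internal-Hom formalism already set up in Section~\ref{subsection:internal hom}.
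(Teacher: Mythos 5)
Your proposal is correct as a plan, but it takes a genuinely different route from the paper's proof of this theorem. The paper verifies the adjunction directly and very briefly: it simply exhibits the two explicit hom-set bijections
\begin{align*}
\Phi_{X,P}&:\Hom_{H\#T}(X,S^T(H,K,P))\to\Hom^T_{(K,K)}(X\otimes_\ku K,P),\quad\Phi_{X,P}(\gamma)(x\otimes k)=\gamma(x)(1\#1\otimes k),\\
\Psi_{X,P}&:\Hom^T_{(K,K)}(X\otimes_\ku K,P)\to\Hom_{H\#T}(X,S^T(H,K,P)),\quad\Psi_{X,P}(\beta)(x)(h\otimes k)=\beta(h\cdot x\otimes k),
\end{align*}
checks they are well defined and mutually inverse, and is done; it never invokes Theorem \ref{adjoint-structure-end} or the relative end at this point. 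You instead propose to take Theorem \ref{adjoint-structure-end} as the starting point and then compute the relative end $\oint_{M\in{}_K\Mo}\big(\uhom(M,F(M)),\beta\big)$ explicitly, matching the dinaturality/prebalancing conditions with (Ad1)--(Ad3). That computation is essentially the content of the paper's \emph{later} Theorem \ref{relative-coend-hopf}(i), so your route is ``abstract adjoint $+$ identification of the end,'' where the paper does the identification only afterwards and proves the adjunction independently. Your approach buys conceptual uniformity (the $R$-matrix condition \eqref{alpha-t-comod} visibly comes from the prebalancing built out of the half-braiding of $G(V)$) at the cost of being longer; the paper's direct $\Phi,\Psi$ argument is shorter but hides where the $T$-comodule condition structurally originates. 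One small slip worth fixing: in step (2) you speak of specializing the dinaturality axiom ``for $X\in\Rep(H\#T)$,'' but in the relative end defining $\bar\rho$ the prebalancing is indexed by $V\in\vc=\Rep(T)$, not by all of $\ca=\Rep(H\#T)$; your later phrase ``dinaturality over $\Rep(T)\subseteq\Rep(H\#T)$ collapses to \eqref{alpha-t-comod}'' has it right, and the condition $\alpha(x\otimes k)=\alpha(x\otimes1)k$ arises from ordinary dinaturality in $M\in\Mo$, not from the prebalancing.
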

\begin{proof}
   For any $X\in\Rep(H\#T), P\in{}_K^T\mathcal{M}_K,h\in H\#T,k\in K$, $x\in X$, define
    \begin{align*}
        \Phi_{X,P}:\Hom_{H\# T}(X,S^T(K,H,P))&\to \Hom_{(K,K)}^T(X\otk K,P)\\
        \Phi_{X,P}(\gamma)(x\otimes k)&=\gamma(x)(1\# 1\otimes k),\\
        \Psi_{X,P}:\Hom_{(K,K)}^T(X\otk K,P)&\to \Hom_{H\# T}(X,S^T(K,H,P))\\
        \Psi_{X,P}(\beta)(x)(h\otimes k)&=\beta(h\cdot x\otimes k).
    \end{align*} Recall that, here we are using the left $T$-comodule structure on $X\otk K$ given in \eqref{coaction-R}. It follows by a satrightforward calculation that $\Phi$ and $\Psi$ are well-defined and one is the inverse of the other. 
\end{proof}

Now, we aim at computing the right adjoint of the functor 
$$ \Zc^{\Rep(T)}(\Rep(H\# T))\to \End_{\Rep(H\# T)}({}_K\Mo).$$

\begin{defi} We shall denote by $S^T(H,K):=S^T(H,K,K^\pi)$. 
\end{defi}

\begin{prop}\label{yetter} Let $K$ be a left $H\# T$-comodule algebra, and $P\in {}_{\quad K}^{H\# T}\mathcal{M}_K$. The following statements hold.
 \begin{itemize}

\item[(i)]   The vector space $S^T(H,K,P)$ belongs to the Yetter-Drinfeld category ${^{H\# T}_{H\# T}}\YD$ with the action and coaction  determined by,  for $h,x\in H\# T,\alpha\in S^T(H,K,P), k\in K$
    \begin{align}\label{act-coact}
    \begin{split}
       \cdot: H\# T\otimes S^T(H,K,P)&\to S^T(H,K,P) \\
       (h\cdot \alpha)(x\otimes k)&=\alpha(xh\otimes k),\\
       \delta: S^T(H,K,P)&\to H\# T\otimes S^T(H,K,P),\\
       \delta(\alpha)&= \alpha\_{-1}\otimes\alpha_0,\\
       \alpha_{-1}\otimes\alpha_0(x\otimes k)&=\Sc (x\_1)\alpha(x\_2\ot 1)\_{-1} x\_3\otimes\alpha( x\_2\otimes 1)\_0\cdot k.
       \end{split}
    \end{align}
\item[(ii)] Moreover, $S^T(H,K,P)\in\Zc^{\Rep(T)}(\Rep(H\#T)).$

\item[(iii)]  The space $S^T(H,K)$ with product determined by
$$\alpha.\beta(h\ot k)=\alpha(h\_1\ot \beta(h\_2\ot k)),$$
for any $\alpha, \beta\in S^T(H,K),$ $h\in H\#T, k\in K$, becomes an algebra in the center $\Zc(\Rep(H\#T)).$
\end{itemize} 
\end{prop}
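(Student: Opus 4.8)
## Proof Proposal

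The plan is to deduce Proposition~\ref{yetter} from the general theory built up in Sections 3 and 4, by transporting structure along the identifications established in the preceding remarks rather than checking the Yetter--Drinfeld axioms and the center condition by hand. Concretely, the preceding discussion identifies $\ca=\Rep(H\#T)$ as a $\Rep(T)$-central tensor category via the functor $G$ of \eqref{Gfunctor}, identifies the module category ${}_K\Mo$, and — crucially — identifies the functor $\overline{\rho}$ of Theorem~\ref{adjoint-structure-end} with the functor $P\mapsto S^T(H,K,P)$ via the adjunction isomorphisms $\Phi,\Psi$ constructed just above. Theorem~\ref{in-the-centralizer} then guarantees that, for $(F,c,d)\in\ca(\vc,\Mo)$, the object $\Zc_\ca(\bar\rho)(F,c,d)$ lies in $\Zc^{\Rep(T)}(\Rep(H\#T))$. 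Under the equivalence $\ca(\Rep(T),{}_K\Mo)\simeq {}_{\;\;\;\;\;\;K}^{H\#T}\mathcal{M}_K$ recalled in the last remark, an object $P\in{}_{\;\;\;\;\;\;K}^{H\#T}\mathcal{M}_K$ corresponds to such a triple $(F,c,d)$, with $F=P\otimes_K-$. So items (i) and (ii) amount to: (a) computing $\Zc_\ca(\bar\rho)$ on objects in Hopf-algebraic terms, obtaining $S^T(H,K,P)$ together with its half-braiding in $\Zc(\Rep(H\#T))$; (b) translating that half-braiding into an $H\#T$-coaction under the standard dictionary $\Zc(\Rep(H\#T))\simeq {}^{H\#T}_{H\#T}\YD$; and (c) observing that the $H\#T$-action in \eqref{act-coact} is simply the $H\#T$-module structure of $\overline{\rho}(P)$, namely $(h\cdot\alpha)(x\otimes k)=\alpha(xh\otimes k)$, already recorded in the discussion preceding Corollary~\ref{s^t-iso-end-coro}.

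The computational heart is step (a)--(b): extracting the half-braiding of $\Zc_\ca(\bar\rho)(F,c,d)$ explicitly. The recipe is \eqref{relative-half-braid}, $\widetilde\sigma_X=l_{X,(F,d)}\,\bar\rho(c_X)\,r^{-1}_{(F,d),X}$, together with Lemma~\ref{l,r,a,b} and Lemma~\ref{modfunct-adjoint}, which express $l$ and $r$ through the unit/counit of the adjunction $(\rho,\bar\rho)$; explicitly these units are $\Phi,\Psi$ above. Unwinding $\widetilde\sigma$ on the image of the dinatural transformations $\lambda^F_M=\pi_M$ and using the explicit internal Hom \eqref{int-hom-hopf} for ${}_K\Mo$ — namely $\uhom(M,N)\simeq\Hom_K(H\#T\otimes_\ku M,N)$ — turns the abstract half-braiding into the coassociativity-type formula $\alpha_{-1}\otimes\alpha_0(x\otimes k)=\Sc(x_{(1)})\,\alpha(x_{(2)}\otimes 1)_{(-1)}\,x_{(3)}\otimes\alpha(x_{(2)}\otimes 1)_{(0)}\cdot k$. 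The appearance of the antipode $\Sc$ of $H\#T$ and the sandwiching by $x_{(1)},x_{(3)}$ is exactly the shadow, under the dictionary $\Zc(\ca)\simeq{}^{H\#T}_{H\#T}\YD$, of the half-braiding $\tau$ of an object of the center; the coaction on $P$ enters through $\bar\rho(c_X)$. Here is where I expect the main obstacle to lie: keeping track of which copy of the $R$-matrix (for $T$, or its reverse $\overline R=R_{21}^{-1}$) enters at each stage, because the central functor $G$ uses the half-braiding \eqref{braid-bar} built from $R^{-1}$, so the prebalancing $\beta$ — and hence the relative end — is governed by $\overline R$, while the ambient Yetter--Drinfeld structure on $H\#T$ is governed by $R$ through the bosonization coproduct $\Delta(h\#t)=h_{(1)}\#R^2t_{(1)}\otimes R^1\cdot h_{(2)}\#t_{(2)}$. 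Reconciling these, and checking that the $R$-matrix contributions cancel to leave the clean formula \eqref{act-coact} with no residual $R$'s, is the delicate point; I would handle it by choosing to do the computation directly with the explicit $\Phi,\Psi$ and the explicit $G$, rather than abstractly, so that every occurrence of $R$ is visible.

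For item (ii), rather than re-deriving the centralizer condition I would simply cite Theorem~\ref{in-the-centralizer}: since $P\in{}_{\;\;\;\;\;\;K}^{H\#T}\mathcal{M}_K$ corresponds to a triple $(F,c,d)$ with $c|_\vc=d$ (this is precisely the content of the equivalence $\ca(\Rep(T),{}_K\Mo)\simeq {}_{\;\;\;\;\;\;K}^{H\#T}\mathcal{M}_K$, where the ``$c|_\vc=d$'' constraint corresponds to the $H\#T$-comodule structure on $P$ being a single coherent datum rather than two independent ones), Theorem~\ref{in-the-centralizer} gives $\Zc_\ca(\bar\rho)(F,c,d)\in\Zc^{\Rep(T)}(\Rep(H\#T))$, i.e.\ $S^T(H,K,P)\in\Zc^{\Rep(T)}(\Rep(H\#T))$; one then just records that the centralizer condition $\sigma_{G(V),S^T}\sigma_{S^T,G(V)}=\id$ translates, under \eqref{braid-bar} and the YD dictionary, into the statement that $S^T(H,K,P)$ is a $T$-comodule via $\pi$ in the compatible way, which is built into Definition~\ref{S^T}. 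Finally, for item (iii), I would take $P=K^\pi$ with its tautological bimodule/comodule structure, note that $\Id_\Mo$ is the object $A_{\vc,\Mo}$ of Corollary~\ref{def-relat-lag}, and transport the algebra structure $m=\mu_{\Id,\Id}$, $u=\varepsilon$ from that corollary through $\Phi,\Psi$: by Claim~\ref{lambda-comp} the multiplication $\mu_{\Id,\Id}$ is governed by $\underline{\comp}$, and composing the explicit $\Phi^{-1}$ with $\underline{\comp}$ for the internal Hom \eqref{int-hom-hopf} yields exactly the convolution-type product $\alpha.\beta(h\otimes k)=\alpha(h_{(1)}\otimes\beta(h_{(2)}\otimes k))$; associativity and unitality, and the fact that this is an algebra in $\Zc(\Rep(H\#T))$, are then automatic from Corollary~\ref{def-relat-lag}. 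The only genuinely new verification is the bookkeeping in step (a)--(b), which is routine once the $R$-matrix conventions are pinned down.
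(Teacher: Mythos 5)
Your proposal is a valid argument, but it takes a genuinely different route from the paper's. The paper proves Proposition~\ref{yetter} directly in Hopf-algebraic terms: for~(i) it cites \cite[Lemma~4.5]{BM} for the Yetter--Drinfeld structure on the ambient space $\Hom_K(H\#T\otimes_\ku K,P)$ satisfying the bimodule/right-translation conditions, and then verifies by hand (a small diagram chase for the action, a short $R$-matrix computation for the coaction) that the extra $T$-comodule constraint defining $S^T(H,K,P)$ is preserved; for~(ii) it checks the double-braiding condition $\psi^{A}_{G(V)}\sigma^{G(V)}_{A}=\id$ pointwise using the explicit formula for $\sigma^{G(V)}$ in \eqref{braid-bar} and properties of the $R$-matrix; and (iii) is declared a straightforward calculation. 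You instead propose to pull the whole structure back along the identification $\bar\rho(P)\cong S^T(H,K,P)$, to invoke Theorem~\ref{in-the-centralizer} for~(ii), and to unwind the abstract half-braiding $\widetilde\sigma_X = l_{X,(F,d)}\,\bar\rho(c_X)\,r^{-1}_{(F,d),X}$ into the formula in~\eqref{act-coact}.

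This is coherent and in spirit is how the authors set things up, but you should be aware of two things. First, the logical order: the unwinding you call ``the computational heart'' is essentially Theorem~\ref{relative-coend-hopf}(ii), which in the paper is stated and proved \emph{after} Proposition~\ref{yetter} and whose proof leans on already knowing the explicit YD structure on $S^T(H,K,P)$. Your route inverts that dependency, so you must actually carry out the unwinding ab initio, not defer it; as it stands your proposal names the difficulty (reconciling $R$ versus $\overline R=R_{21}^{-1}$ and the bosonization coproduct) but does not resolve it, and this is precisely the part the paper's direct computation handles concretely. Second, your appeal to Corollary~\ref{def-relat-lag} for~(iii) is stated for exact indecomposable $\Mo$; the statement of Proposition~\ref{yetter}(iii) imposes no such hypothesis on $K$. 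The fix is easy --- only lax monoidality of $\Zc_\ca(\bar\rho)$ is used in item~(1) of that corollary, not exactness or indecomposability --- but the appeal should be to that weaker fact, not to the corollary as stated. Net effect: your approach buys an immediate proof of~(ii) from Theorem~\ref{in-the-centralizer}, at the cost of replacing the paper's short closure computations in~(i) by a longer and more delicate diagram unwinding; the paper opts for the cheaper direct route and cites \cite{BM} to avoid reproving the YD axioms from scratch.
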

\begin{proof} (i). Recall that $S^T(H,K,P)$ is a subspace of the space of morphisms $\alpha\in \Hom_K(H\#T\otk K,P)$ such that $\alpha(h\otimes k)=\alpha(h\otimes 1)\cdot k$. The  Yetter-Drinfeld structure of the later space was given in \cite[Lemma 4.5]{BM}, and it coincides with \eqref{act-coact}. Thus, we only have to prove that  $S^T(H,K,P)$ is closed by the action and coaction. 
Let $h\in H\# T,\alpha\in S^T(H,K,P)$. We will prove that $h\cdot \alpha$ is a $T$-comodule map. Consider the following diagram
\begin{equation*}
\xymatrix{H\#T\otimes K\ar@/^2pc/[rr]_{-h\otimes \mathrm{id}}\ar[r]^{h\cdot\alpha}\ar[d]_{\lambda_{H\# T}}&P\ar[d]^{\rho}
&H\# T\otimes K\ar[l]_{\alpha}\ar[d]^{\lambda_{H\# T}}\\
T\otimes (H\# T\otimes K) \ar[r]_(0.65){\mathrm{id}\otimes h\cdot\alpha}\ar@/_2pc/[rr]^{\mathrm{id}\otimes (-h)\otimes \mathrm{id}}&T\otimes P& T\otimes (H\# T\otimes K) \ar[l]^(0.65){\mathrm{id}\otimes\alpha}}
\end{equation*}
The top and bottom triangles are the definition of $h\cdot\alpha$. The right square follows from the fact that $\alpha$ is a $T$-comodule morphism. Then, $h\cdot\alpha$ is a $T$-comodule morphism if and only if, the left square commutes, which follows if 
the exterior square commute.
However, the exterior square commutes by the definition of $\lambda_{H\# T}.$
\medbreak

Moreover $\alpha_0$ is a $T$-comodule map.  
By definition
$\alpha_{-1}\otimes\alpha_0(x\otimes k)= \Sc(x\_1)(1\# R^{-2}) x\_3\otimes \alpha((1\#R^{-1}) x\_2\ot 1)\cdot k,$
then for $x\otk k\in H\#T\otk K$, using \eqref{alpha-t-comod}
\begin{align*}
    \alpha_{-1}&\otimes \rho^\pi\alpha_0(x\otk k)= \Sc(x\_1)(1\# R^{-2}) x\_3\otimes \\  &\big(\alpha((1\#R^{-1})(1\#R^{-2}) x\_2\ot 1)\cdot k\big)_{-1}
    \otimes  \big(\alpha((1\#R^{-1})(1\#R^{-2}) x\_2\ot 1)\cdot k\big)_0\\
    &=\Sc(x\_1)(1\# R^{-2}) x\_3\otimes R^{-1}\otimes \alpha((1\#R^{-1})(1\#R^{-2}) x\_2\ot 1)\cdot k\\
    &=\alpha_{-1}\otimes (\id_T\otimes \alpha_0)\lambda_{H\#T}(x\otk k).
\end{align*}

(ii).  Let $A=S^T(H,K,P)$, $V\in\Rep(T)$ and consider the braiding $\psi^A_{X}: A\otimes X\to X\otimes A$, of $A$, for any $X\in {^{H\# T}_{H\# T}}\YD$. The braiding in the Yetter-Drinfeld category is $\psi^A_{X}(\alpha\ot x)= \alpha\_{-1}\cdot x\ot\alpha\_0$, $\alpha\in A, x\in X$. We must prove that $\psi^A_{G(V)} \sigma^{G(V)}_A=\id$, for any $V\in \vc$. If $\alpha\in A, v\in V$, then
$$\psi^A_{G(V)} \sigma^{G(V)}_A(v \ot \alpha)=\pi\big(((1\#R^{-1})\cdot \alpha)\_{-1}\big)\cdot (R^{-2}\cdot v)\ot (1\#R^{-1})\cdot \alpha)\_{0}.$$
Evaluating the second tensorand in $x\ot k\in H\#T\ot K$
\begin{align*}
    \pi\big(((1&\#R^{-1})\cdot \alpha)\_{-1}\big)\cdot (R^{-2}\cdot v)\ot (1\#R^{-1})\cdot \alpha)\_{0}(x\ot k)=\\
    &=\pi\big(\Sc(x\_1)((1\#R^{-1})\cdot \alpha)(x\_2\ot k)\_{-1}x\_3\big)\cdot(R^{-2}\cdot v)\ot ((1\#R^{-1})\cdot \alpha)(x_2\ot k)\_0\\
&=\pi\big(\Sc(x\_1)\alpha(x\_2(1\#R^{-1})\ot k)\_{-1}x\_3\big)\cdot(R^{-2}\cdot v)\ot \alpha(x_2(1\#R^{-1})\ot k)\_0\\
&=\pi\big(\Sc(x_1)r^{2}x\_3\big)\cdot(R^{-2}\cdot v)\ot \alpha((1\# r^{1})x\_2(1\#R^{-1})\ot k)\\
&=v\ot \alpha(x\ot 1)\cdot k=v\ot \alpha(x\ot k).
\end{align*}
The first and second equalities follow from  the definition of coaction and action. The third equality follows from \eqref{alpha-t-comod}. Fourth equality follows from properties of the R-matrix and the antipode. This proves that $S^T(H,K,P)\in\Zc^{\Rep(T)}(\Rep(H\#T)).$ Here we use notation $R=R^1\ot R^2=r^1\ot r^2.$

(iii). Follows by a straightforward calculation. \end{proof}

\begin{teo}\label{relative-coend-hopf}  Let $(T,R)$ be a quasitriangular Hopf algebra, $H$ a braided Hopf algebra in $\Rep(H),$ and $K$ a left $H\# T$-comodule algebra and $P\in  {}_K^{H\#  T}\Mo_K$. Then $\Mo={}_K\Mo$ is a left $\Rep(H\# T)$-module category. The following statements hold.
\begin{itemize}
    \item[(i)] There is an isomorphism of $H\# T$-modules
\begin{equation}\label{s^t-iso-end}
    S^T(H, K,P)\simeq \oint_{M\in \Mo} \Hom_K(H\# T\otk M,P \ot_K M).
\end{equation}
When $P=K$, this isomorphism is an algebra map.
\item[(ii)] For any $P\in {}_K^{H\# T}\Mo_K$ there is an isomorphism $S^T(H,K,P)\simeq \Zc(\rhob_K)(P)$ in ${}^{H\# T}_{H\# T}\YD$.

\end{itemize}
    
\end{teo}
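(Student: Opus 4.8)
The plan is to derive part (i) from the uniqueness of right adjoints, the algebra assertion from the explicit lax monoidal structure of $\Zc_\ca(\bar\rho)$, and part (ii) by unwinding the half-braiding formula \eqref{relative-half-braid} in the present Hopf-algebraic setting. Throughout write $\ca=\Rep(H\#T)$, $\vc=\Rep(T,\overline R)$, $\Mo={}_K\Mo$, and let $F_P=P\ot_K-$ be the module endofunctor attached to $P$ by \eqref{equiv-modfunct}; by Lemma~\ref{dual-equiv-2s} it yields an object $(F_P,c,d)\in\ca(\vc,\Mo)$ with $d=c\mid_\vc$. By the explicit internal Hom \eqref{int-hom-hopf} one has $\uhom(M,F_P(M))\simeq\Hom_K(H\#T\otk M,P\ot_K M)$, so by \eqref{DEF:adj-rho}, $\bar\rho(F_P)=\oint_{M\in\Mo}\big(\Hom_K(H\#T\otk M,P\ot_K M),\beta\big)$. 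The earlier Theorem identifying the right adjoint of $\rho$ with $P\mapsto S^T(H,K,P)$ exhibits this same object as a right adjoint of $\rho\colon\ca\to\End_\vc(\Mo)$ (using $\End_\vc(\Mo)\simeq{}_{K^\pi}^T\mathcal{M}_{K^\pi}$), so uniqueness of adjoints gives the natural isomorphism of $H\#T$-modules \eqref{s^t-iso-end}. For what follows one also records the dinatural maps: chasing the counit of Corollary~\ref{unit-counit} through \eqref{phi-psi-hopf} shows that under this identification $\lambda^{F_P}_M(\alpha)(h\otimes m)=\alpha(h\otimes1)\ot_K m$, and a direct check confirms this family is dinatural for $\beta$ in the sense of \eqref{dinat:end:module:left}.

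For the algebra assertion take $P=K$, so that $F_K=\Id_\Mo$ and $\bar\rho(F_K)=A_{\vc,\Mo}$ with product $\mu_{\Id,\Id}$ coming from the lax monoidal structure of $\Zc_\ca(\bar\rho)$ (Corollary~\ref{def-relat-lag}). By Claim~\ref{lambda-comp} the product is the unique map with $\lambda^{\Id}_M\mu_{\Id,\Id}=\underline{\mathrm{comp}}_{M,M,M}(\lambda^{\Id}_M\ot\lambda^{\Id}_M)$. Transporting along the isomorphism above and inserting the explicit form of the composition $\underline{\mathrm{comp}}$ for $\uhom=\Hom_K(H\#T\otk-,-)$ — concretely, $\underline{\mathrm{comp}}$ sends $(\gamma,\delta)$ to $h\otimes m\mapsto\gamma(h\_1\otimes\delta(h\_2\otimes m))$ — the transported product becomes the convolution $\alpha.\beta(h\otimes k)=\alpha(h\_1\otimes\beta(h\_2\otimes k))$ of Proposition~\ref{yetter}(iii), after using $\alpha(x\otimes k)=\alpha(x\otimes1)k$. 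This is the computation of \cite{BM} in the case $T=\ku$; the general case only adds bookkeeping of the $R$-matrix. Hence \eqref{s^t-iso-end} is an algebra map for $P=K$, which completes (i).

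For (ii), recall from Section~\ref{subsection:relative center} and Theorem~\ref{in-the-centralizer} that $\Zc(\rhob_K)(P)=\Zc_\ca(\bar\rho)(F_P,c,d)$ is $S^T(H,K,P)$ equipped with the half-braiding $\widetilde\sigma_X=l_{X,(F_P,d)}\,\bar\rho(c_X)\,r^{-1}_{(F_P,d),X}$ of \eqref{relative-half-braid}, an object of $\Zc^\vc(\ca)\subseteq{^{H\#T}_{H\#T}}\YD$; Proposition~\ref{yetter}(i)--(ii) independently endows $S^T(H,K,P)$ with a Yetter--Drinfeld structure, hence with the half-braiding $\psi_X(\alpha\otimes x)=\alpha\_{-1}\cdot x\otimes\alpha\_0$. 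It remains to show $\widetilde\sigma=\psi$. The plan is to expand $\widetilde\sigma_X$: compute the $\ca$-bimodule structure $l,r$ of $\bar\rho$ — either from Lemma~\ref{l,r,a,b} together with the explicit Hopf-algebraic forms of $\mathfrak{a},\mathfrak{b}$ specialised from \cite[Appendix A]{Sh2} to $\uhom=\Hom_K(H\#T\otk-,-)$, or directly from Lemma~\ref{modfunct-adjoint} applied to the explicit functor $\rho$ — and then substitute, together with the half-braiding $c$ of $F_P=P\ot_K-$, which by \eqref{lambda} is determined by the $H\#T$-comodule structure of $P$. Evaluating the resulting composite on $\alpha\otimes x$ and then at $h\otimes m$ and simplifying with the $R$-matrix axioms, the antipode identities, and the $T$-comodule condition \eqref{alpha-t-comod}, reproduces exactly the coaction of Proposition~\ref{yetter}(i); hence $\widetilde\sigma=\psi$, and \eqref{s^t-iso-end} is an isomorphism in ${^{H\#T}_{H\#T}}\YD$. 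Alternatively, since $(\rho,\bar\rho)$ is an adjunction of $\ca$-bimodule functors, one has $\Zc_\ca(\rho)\dashv\Zc_\ca(\bar\rho)$, and one may instead check that $P\mapsto(S^T(H,K,P),\psi)$ is right adjoint to $\Zc_\ca(\rho)\colon\Zc^\vc(\ca)\to\ca(\vc,\Mo)$ and conclude by uniqueness.

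The step I expect to be the main obstacle is the identification $\widetilde\sigma=\psi$ in (ii): $\widetilde\sigma_X$ involves three nested layers — the bimodule structure of $\bar\rho$ from Lemma~\ref{modfunct-adjoint}, the rewriting of Lemma~\ref{l,r,a,b}, and the explicit forms of $\underline{\mathrm{coev}},\underline{\mathrm{ev}},\underline{\mathrm{comp}},\mathfrak{a},\mathfrak{b}$ — and one must simultaneously track the half-braiding $\sigma^W$ of $\Rep(H\#T)$ and the $R$-matrix Sweedler calculus, which is exactly where the role of the braiding of $\vc$ reappears. By comparison the bare isomorphism in (i) is essentially formal once $S^T(H,K,-)$ is recognised as the right adjoint, and the convolution-product identification is a shorter instance of the same type of computation.
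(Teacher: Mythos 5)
Your proposal is correct, and for part (i) it follows a slightly different route than the paper. The paper constructs the morphisms $\lambda^P_M(\alpha)(h\otimes m)=\alpha(h\otimes 1)\otimes m$ directly, then verifies universality by observing that the dinaturality/universality condition \eqref{beta-d12} evaluated on $\alpha$ is literally equivalent to the $T$-comodule condition \eqref{alpha-t-comod} defining $S^T(H,K,P)$ — so the candidate dinatural family is both dinatural and universal in one computation. You instead invoke the earlier unnumbered adjointness theorem of Section~\ref{Section:bosonhopf} together with Theorem~\ref{adjoint-structure-end} and conclude via uniqueness of right adjoints, and only then recover the dinatural maps by chasing the adjunction counit. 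Both are valid: your route leans harder on work already done and makes the formal structure transparent, while the paper's direct verification is what actually produces the explicit family $\lambda^P$ with no extra chase and makes the ``dinaturality $=$ (Ad2)'' equivalence visible, which is the conceptual content of the statement. For the algebra claim and for (ii), your plan (Claim~\ref{lambda-comp} plus explicit $\underline{\mathrm{comp}}$, respectively unwinding \eqref{relative-half-braid} or using uniqueness for $\Zc_\ca(\rho)\dashv\Zc_\ca(\bar\rho)$) matches the paper in substance; the paper simply asserts these as ``by Corollary~\ref{def-relat-lag}'' and ``by the description of $\Zc(\rhob_K)$'' without expansion, so your writeup is a fleshed-out version of the same argument. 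Your own diagnosis that $\widetilde\sigma=\psi$ is the computational bottleneck is accurate, and your proposed fallback via uniqueness of adjoints for the lifted functors is the cleaner way to bypass it.
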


\pf (i). In this case, using the computation of the internal Hom of ${}_K\Mo$ given in \eqref{int-hom-hopf} the functor $S^P:{}_K\Mo^{\op}\times {}_K\Mo \to \vect_\ku,$ is $S^P(M,N)=\Hom_K(H\# T\otk M,P \ot_K N).$  
Using natural isomorphisms \eqref{phi-psi-hopf}, the prebalancing for the functor $S^P$ given in \eqref{prebalancing-V}, in this case is
$$ \Hom_K( H\#T\otk M, P\ot_K (V\otk N))\xrightarrow{\beta^V_{M,N}} \Hom_K( H\#T\otk V^*\otk M, P\ot_K N),$$
$$ \beta^V_{M,N}(f)(h\ot j\ot m)=(\ev_V\ot \id)\big(R^{-1}\cdot j\ot d^P_{V,N}(f((1\#R^{-2})h\ot m))\big),$$
for any $h\in H\# T, j\in V^*, m\in M$, $V\in \Rep(T), M, N\in {}_K\Mo$. Here for any $v\in V, p\in P, n\in N$ $$d^P_{V,N}: P\ot_K (V\otk N)\to V\otk (P\ot_K N),$$  $$d^P_{V,N}(p\ot v\ot n)= p\_{-1}\cdot v\ot p\_0 \ot n,$$ See related formula \eqref{lambda}. Also in this case, we have that Equation \eqref{beta-d1} is
\begin{equation}\label{beta-d12} \uhom(\ev_V \otk \id_M, \id_{P\ot_K M}) \lambda_M=\beta^V_{V\otk M, M}  \lambda_{V\otk M},
\end{equation}
\medbreak
for any dinatural transformation $\lambda_M:C\to \Hom_K( H\#T\otk M, P\ot_K M)$.
One can show that, morphisms
$$\lambda^P_M:S^T(H,K,P)\to \Hom_K( H\#T\otk M, P\ot_K M),$$
$$\lambda^P_M(\alpha)(h\ot m)=\alpha(h\ot 1)\ot m, $$
for any $h\in H\# T, M\in {}_K\Mo, m\in M$, are indeed dinatural transformations. It follows by a straightforward calculation, that Equation \eqref{beta-d12} evaluated in an element $\alpha\in S^T(H,K,P)$ is equivalent to Equation \eqref{alpha-t-comod}. It follows that $\lambda^P$ are universal, thus implying ismorphism \eqref{s^t-iso-end}. One can see that this isomorphism is an algebra isomorphism by using the algebra structure given in Corollary \ref{def-relat-lag}.

(ii). It follows from the description of the functor $\Zc(\rhob_K)$.
\epf

The next result follows from Corollary \ref{def-relat-lag} and Theorem \ref{relative-coend-hopf}.
\begin{cor}\label{s^t-iso-end-coro} Let $(T,R)$ be a quasitriangular Hopf algebra, $H$ a braided Hopf algebra in $\Rep(H)$, and $K$ a right simple left $H\# T$-comodule algebra. Then the following statements hold.
\begin{itemize}

    \item[(i)] There is an isomorphism of algebras $A_{\Rep(T), {}_K\Mo}\simeq S^T(H,K)$ in $\Zc(\Rep(H\#T)).$
    \item[(ii)] Algebras $S^T(H,K)$ are braided commutative in $\Zc^{\Rep(T)}(\Rep(H\#T))$.

\end{itemize}\qed
    
\end{cor}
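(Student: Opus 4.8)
This corollary is obtained by assembling Corollary~\ref{def-relat-lag} with Theorem~\ref{relative-coend-hopf}, so the work is essentially bookkeeping. Throughout I take $\vc=\Rep(T)$, $\ca=\Rep(H\#T)$ and $\Mo={}_K\Mo$; since $K$ is right $H\#T$-simple, $\Mo$ is exact and indecomposable, so Corollary~\ref{def-relat-lag} is available. The starting observation is that, by \eqref{DEF:adj-rho}, Corollary~\ref{def-relat-lag}(1) and Claim~\ref{lambda-comp}, the algebra $A_{\Rep(T),{}_K\Mo}=\Zc_\ca(\bar{\rho})(\Id_\Mo)$ \emph{is} the relative end $\oint_{M\in\Mo}\uhom(M,M)$ equipped with the multiplication induced by $\underline{comp}$ and unit induced by $\underline{coev}$; moreover, by Corollary~\ref{def-relat-lag}(1) (through Theorem~\ref{in-the-centralizer}), it already lies in the relative center $\Zc^{\Rep(T)}(\Rep(H\#T))$. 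Under the equivalence \eqref{equiv-modfunct} the functor $\Id_\Mo$ corresponds to the regular bimodule $K$.

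For (i), I would first rewrite the internal Hom by \eqref{int-hom-hopf}, namely $\uhom(M,M)\simeq\Hom_K(H\#T\otk M,M)=\Hom_K(H\#T\otk M,K\ot_K M)$, so that $A_{\Rep(T),{}_K\Mo}\simeq\oint_{M\in\Mo}\Hom_K(H\#T\otk M,K\ot_K M)$ as algebras. Specializing Theorem~\ref{relative-coend-hopf}(i) to $P=K$ produces an isomorphism of $H\#T$-modules from $S^T(H,K)$, carrying the product of Proposition~\ref{yetter}(iii), onto this relative end, and that statement guarantees it is an algebra map. To promote it to an isomorphism of algebras \emph{in} $\Zc(\Rep(H\#T))$ I would invoke Proposition~\ref{yetter}(ii), which places $S^T(H,K)$ in $\Zc^{\Rep(T)}(\Rep(H\#T))\subseteq\Zc(\Rep(H\#T))$, together with Theorem~\ref{relative-coend-hopf}(ii) at $P=K$, which identifies the same isomorphism as a morphism $S^T(H,K)\simeq\Zc(\bar{\rho}_K)(K)=A_{\Rep(T),{}_K\Mo}$ in ${}^{H\#T}_{H\#T}\YD$, hence in $\Zc(\Rep(H\#T))$ under the standard equivalence. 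So the half-braidings agree, the underlying $H\#T$-module isomorphism is the same one, and the algebra isomorphism is a morphism in the center, which is (i).

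For (ii), Corollary~\ref{def-relat-lag}(2) states that $A_{\Rep(T),{}_K\Mo}$ is braided commutative in $\Zc^{\Rep(T)}(\Rep(H\#T))$. Since that relative center is a full braided monoidal subcategory of $\Zc(\Rep(H\#T))$ and both $S^T(H,K)$ (by Proposition~\ref{yetter}(ii)) and $A_{\Rep(T),{}_K\Mo}$ lie in it, the isomorphism of (i) is automatically an isomorphism of algebras inside $\Zc^{\Rep(T)}(\Rep(H\#T))$; as braided commutativity is stable under algebra isomorphism, $S^T(H,K)$ is braided commutative there. The only non-formal ingredients used are Theorem~\ref{relative-coend-hopf}(i),(ii) and Proposition~\ref{yetter}(ii); the genuine difficulty — matching the coend algebra structure and the half-braiding of $A_{\vc,\Mo}$ against the combinatorial description of $S^T(H,K)$ — is already absorbed into the proofs of those results, so I expect nothing of substance to remain to be checked here.
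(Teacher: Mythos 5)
Your proposal is correct and follows exactly the route the paper intends: the paper's proof is the one-line remark that the corollary ``follows from Corollary \ref{def-relat-lag} and Theorem \ref{relative-coend-hopf},'' and you have simply unwound that citation, specializing Theorem \ref{relative-coend-hopf}(i),(ii) at $P=K$, invoking Proposition \ref{yetter}(ii) to place $S^T(H,K)$ in the relative center, and transporting braided commutativity from $A_{\Rep(T),{}_K\Mo}$ via Corollary \ref{def-relat-lag}(2).
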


\subsection{Some general examples}\label{Section:general examples}
Let us show some  computations of algebras $S^T(H,K)$. First, let us make a brief summary of the description of these algebras. 

\medbreak

Let $K$ be a left $H\# T$-comodule algebra $K$. As vector spaces  $S^T(H,K)$ coincides with the vector subspace of linear functions $\alpha:H\# T\otk K\to K $ such that for any $h\in H, k,l\in K, x\in H\# T$
\begin{itemize}
    \item[(Ad1)] $\alpha$ is a $K$-module map, $\alpha(k\_{-1}x\ot k\_0 l)=k\alpha(x\ot l)$;
    
 \item[(Ad2)] $\alpha$ is a $T$-comodule map, $R^{2}\ot \alpha((1\#R^{1})x\ot k)=\pi(\alpha(x\ot k)\_{-1})\ot \alpha(x\ot k)\_{0}$;

 \item[(Ad3)]     $\alpha(x\ot k)=\alpha(x\ot 1) k;$
\end{itemize}
\medbreak

The structure of Yetter-Drinfeld module of $S^T(H,K)$ over $H\# T$ is given by
\begin{itemize}
    \item[(Ad4)]  The $H\# T$-action, $(h\cdot \alpha)(x\otimes k)=\alpha(xh\otimes k)$;
 \item[(Ad5)] the $H\# T$-coaction, $\delta(\alpha)= \alpha\_{-1}\otimes\alpha_0,$ such that $$\alpha_{-1}\otimes\alpha_0(x\otimes k)=\Sc (x\_1)\alpha(x\_2\ot 1)\_{-1} x\_3\otimes\alpha( x\_2\otimes 1)\_0 k.$$

 \item[(Ad6)] The product, $\alpha.\beta(x\ot k)=\alpha(x\_1\ot \beta(x\_2\ot k)).$
\end{itemize}
\medbreak

Let us now describe $S^T(H,K)$ in some general situations.

\begin{itemize} 
\item[1.] Case $K=H\# T=P$. We  calculate $S^T(H,H\# T)\subset \Hom_K(K\otk K,K^\pi)$.
 We denote by $H_{\ad}=H$ with same algebra structure. The object  $H_{\ad}$ belongs to the category 
${^{H\# T}_{H\# T}}\YD$ with 
\begin{itemize}
    \item[$\bullet$] $H\# T$-coaction given by $\rho(h)=h\_1\# R^2\ot R^1\cdot h\_2$,

 \item[$\bullet$]    $H\# T$-action $x\cdot h=(\id\ot\epsilon_T)\big(x\_1(h\#1)\Sc(x\_2)\big)$ for $h\in H, x\in H\# T$.
\end{itemize} and  
the map
$\phi:S^T(H,H\# T)\to H_{\ad}, \ \ \phi(\alpha)=(\id\ot \epsilon_T)\alpha(1\ot 1)$ is an isomorphism of algebras in ${^{H\# T}_{H\# T}}\YD$.

    \item[2.] Case $K=\ku 1=P$. $S^T(H,\ku 1)=\Hom^T(H\#T,\ku 1)$. By \cite[Example 4.13]{BM0}, $(H\# T)^*$ is an algebra in ${^{H\# T}_{H\# T}}\YD$ with action $(x\cdot f)(y)=f(yx)$ for any $x,y\in H\# T, f\in (H\# T)^*$ and coaction $f\mapsto f\_{-1}\ot f\_0$ such that $g(f\_{-1})f\_0=\Sc(g\_1)fg\_2$ for any $g\in (H\# T)^*$. Denoted this algebra by $(H\# T)^*_{\ad}.$

    Then \begin{align*}
        S^T(H,\ku 1)&=\{f\in (H\# T)^*_{\ad} | 1_T f(h\# t)=R^2 f(R^1\_1\cdot h\# R^1\_2t),\forall h\#t\in H\# T\}\\
        &=\{f\in (H\# T)^*_{\ad} | 1_T f(x)=R^2 f(R^1\cdot x),\forall x\in H\# T\}.
    \end{align*}

    If for any $x\in H\#T$, $R^2\ot R^1\cdot x=R^2\ot \epsilon(R^1)x=1\ot x$ then $S^T(H,\ku 1)=(H\# T)^*_{\ad}$. In particular, this happen if $R=1\ot 1$.

    \item[3.] Cocommutative $T$,  $R=1\ot 1$. Condition (Ad2)=\eqref{alpha-t-comod} of $T$-comodule structure is equivalent to 
    $$1\ot\alpha(x\ot k)=\pi(\alpha(x\ot k)\_{-1})\ot \alpha(x\ot k)\_0\in T\ot P$$
    then $S^T(H,K)=(\Hom_{K,K}((H\# T )\otk K,K))^\text{co($T$)}$ where the $T$-coaction is induced by $\pi$. 
\end{itemize}

\subsection{Some examples in Taft  algebras}\label{SECTION:Taft}

Let $n\in \Na$ and $q\in \ku$ be an $n$-th primitive root of unity. In this Section we shall assume that $T=\ku C_n$ is the group algebra of a cyclic group $C_n=<g>$ equipped with  $R$-matrix given by
$$R=\frac{1}{n}\sum_{i,j=0}^{n-1} q^{-ij}\,\, g^i\ot g^j.$$ We shall take some Hopf algebra $H\in \Rep(\ku C_n)$ such that the bosonization $H\# \ku C_n$ is the Taft algebra. Recall that, the \emph{Taft  algebra} is the algebra
 $$T_q= \ku\langle g, x\vert \; gx = q\,xg,
\;g^{n} = 1, \; x^n =0 \rangle,$$ with coproduct  determined by
$$\Delta(g) = g\otimes g,\quad
 \Delta(x) = x \otimes 1+ g\otimes x. $$
If we define $H=\ku<x>/(x^n)$ as an object in $\Rep(\ku C_n)$ as follows:
$$ g\cdot x=q\, x,\quad \Delta(x)=x\ot  1+ 1\ot x,$$
then $H$ turns into a (braided) Hopf algebra. It is not difficult to prove that $H\# \ku C_n\simeq T_q$ as Hopf algebras. We shall introduce next some comodule algebras that will parametrize exact indecomposable $\Rep(T_q)$-module categories.

\medbreak

Let $d$ be a divisor of $n$. 
Set $n=dm$.  For any $\xi\in\ku$
define the algebra $\kc(d,\xi)$ generated by elements $h, w$
subject to relations
$$ h^d=1,\;\;\; hw=q^m\; wh,\;\;\; w^n=\xi\, 1.$$
The algebra $\kc(d,\xi)$ is a left $T_q$-comodule algebra with
coaction determined by
$$\lambda(h)=g^m\ot h, \; \lambda(w)= x\ot 1 + g\ot w.$$

Also, the group algebra $\ku C_d$ is a left coideal subalgebra of $T_q$. 

\begin{teo}\cite{M} The following statements hold.
\begin{itemize}
\item[1.] Categories ${}_{\ku C_d}\Mo$, ${}_{\kc(d,\xi)}\Mo$ are exact indecomposable $\Rep(T_q)$-module categories.

\item[2.] Module categories ${}_{\kc(d,\xi)}\Mo$, ${}_{\Ac(d',\xi')}\Mo$
     are  equivalent if and only if
     $d=d'$, $\xi=\xi'$.

\item[3.]   If $\Mo$ is an exact indecomposable module category
over $\Rep(T_q)$ then $\Mo\simeq {}_{\ku C_d}\Mo$ or $\Mo\simeq
{}_{\kc(d,\xi)}\Mo$ for some divisor $d$ of $n$ and $\xi\in \ku$.
\end{itemize}\qed
\end{teo}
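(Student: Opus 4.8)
This is \cite{M}; I sketch the strategy one would follow. By \cite[Prop.~1.20]{AM}, every exact indecomposable $\Rep(T_q)$-module category is equivalent to ${}_K\Mo$ for some right $T_q$-simple left $T_q$-comodule algebra $K$ with $K^{\co T_q}=\ku 1$, and two such module categories are equivalent precisely when the comodule algebras are equivariantly Morita equivalent. So the three assertions become, respectively: (a) $\ku C_d$ and $\kc(d,\xi)$ are right $T_q$-simple comodule algebras with trivial coinvariants; (b) every such comodule algebra is equivariantly Morita equivalent to one of these; (c) the only coincidences are the stated ones. For (a), hence for item~1: $\ku C_d=\ku\langle g^m\rangle$ is a left coideal subalgebra of $T_q$, so it is automatically $T_q$-simple, and from $\lambda(h^i)=g^{mi}\otimes h^i$ one reads off $(\ku C_d)^{\co T_q}=\ku 1$. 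For $\kc(d,\xi)$ one first checks that $h^d=1$, $hw=q^mwh$, $w^n=\xi 1$ together with $\lambda(h)=g^m\otimes h$, $\lambda(w)=x\otimes 1+g\otimes w$ do define a $\ku$-algebra of dimension $dn$ with a compatible $T_q$-comodule structure and trivial coinvariants; the remaining point, right $T_q$-simplicity, is obtained by filtering $\kc(d,\xi)$ by powers of $w$ (a filtration compatible with the coradical filtration of $T_q$) and showing that any nonzero $T_q$-costable right ideal contains some power of $w$, and then $1$.

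For (b), hence for item~3: given an arbitrary right $T_q$-simple $K$ with $K^{\co T_q}=\ku 1$, pass to the associated graded $\gr K$ for the coradical filtration of $T_q$. Since $T_q$ is coradically graded ($\gr T_q\cong T_q$ with $\deg g=0$, $\deg x=1$), $\gr K$ is again a $T_q$-simple comodule algebra with trivial coinvariants, now graded. Its degree-zero part is a $\ku C_n$-comodule algebra, i.e.\ a $C_n$-graded algebra, which being simple with trivial coinvariants must be $\ku_\psi F$ for a subgroup $F\le C_n$ and a $2$-cocycle $\psi$; as $C_n$ is cyclic, $H^2(F,\ku^{\times})=0$, so this part is the group algebra of some $F=C_d$ with $d\mid n$. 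The degree-one part is then forced, by the comodule and simplicity constraints, to be either zero — giving $\gr K\cong\ku C_d$ — or free of rank one over the degree-zero part, generated by an element $w$ with $\lambda(w)=x\otimes 1+g\otimes w$ and $hw=q^mwh$; nilpotency together with $T_q$-simplicity then pins $w^n$ down to a scalar $\xi$, so that $\gr K\cong\kc(d,\xi)$. Finally one lifts the filtration and checks that the cohomological obstruction governing deformations of these generators is already absorbed into the parameter $\xi$, so $K$ is equivariantly Morita equivalent to $\ku C_d$ or to $\kc(d,\xi)$.

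For (c), hence for item~2: one recovers $d$ from ${}_{\kc(d,\xi)}\Mo$ by restricting the module category along the tensor inclusion $\Rep(\ku C_n)\hookrightarrow\Rep(T_q)$ induced by $\pi$, which records the support subgroup (this is $C_d$ for $\ku C_d$, but all of $C_n$ for every $\kc(d',\xi')$, since $w$ then lands in degree $g$; hence ${}_{\ku C_d}\Mo$ never occurs among the ${}_{\kc(d',\xi')}\Mo$). For fixed $d$, an equivariant Morita equivalence between $\kc(d,\xi)$ and $\kc(d,\xi')$ induces, after passing to basic reductions, an isomorphism of comodule algebras; such an isomorphism cannot rescale $w$, because the coefficient of $x\otimes 1$ in $\lambda(w)$ is rigidly $1$, so it carries $w^n=\xi 1$ to $w^n=\xi' 1$ and forces $\xi=\xi'$; conversely equal parameters give the identity equivalence.

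The main obstacle is step (b): proving that the degree-zero part of $\gr K$ is forced to be a genuine (cocommutative) group-algebra coideal subalgebra, and that lifting the coradical filtration from $\gr K$ back to $K$ introduces no moduli beyond $\xi$. This is exactly where the coradically graded structure of the Taft algebra and the vanishing $H^2(C_d,\ku^{\times})=0$ are used, and it is the technical core of \cite{M}.
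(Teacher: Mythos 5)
The paper does not prove this theorem: it is stated and immediately closed with \(\qed\), being cited from \cite{M}, so there is no internal proof of the paper's own to compare your sketch against. That said, your outline does follow the standard strategy for this kind of classification (reduce to right \(H\)-simple left \(H\)-comodule algebras with trivial coinvariants via \cite[Prop.~1.20]{AM}, then exploit the coradical filtration of the pointed Hopf algebra), so the overall route is reasonable. But several steps that you state as if they were routine are in fact the substantive content and are not correctly discharged as written.

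Two concrete gaps. First, you assert that \(\gr K\) is again right \(T_q\)-simple with trivial coinvariants. That the associated graded inherits the comodule-algebra structure is easy; that it inherits \emph{right \(H\)-simplicity} is a genuine lemma, not a formal consequence of the coradical filtration being compatible --- it is precisely the kind of statement that carries real weight in this argument and would need to be proved, not merely invoked. Second, your uniqueness argument for item~2 is incomplete. You only rule out \emph{rescaling} \(w\), via the coefficient \(1\) on \(x\otimes 1\) in \(\lambda(w)\). You do not rule out \emph{shifts} \(\varphi(w)=w'+c\,a\) with \(a\) a degree-zero element satisfying \(\lambda(a)=g\otimes a\). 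When \(d<n\) such an \(a\) does not exist in \(\ku C_d\) (since \(m=n/d\) is not invertible mod \(n\)), but when \(d=n\) one has \(a=h'\), and then you need an extra argument: the relation \(hw=q^m wh\) forces \(c\,(1-q^m)\,h'^2=0\), hence \(c=0\) since \(q^m\neq 1\). Without this you have not shown that the isomorphism carries \(w\) to \(w'\), so you cannot yet conclude \(\xi=\xi'\). Similarly, the reduction from equivariant Morita equivalence to isomorphism of comodule algebras ``after passing to basic reductions'' needs justification (why a chosen basic representative is unique within its equivariant Morita class); the phrase gestures at the right idea but does not supply it.
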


 We shall compute first Shimizu's adjoint algebras for module categories of the form  ${}_{\kc(d,\xi)}\Mo$. Denote by ${\mathcal T}(d,\xi)=\bigoplus_{i=0}^{m-1} \kc(d,\xi)$. An element $t\in {\mathcal T}(d,\xi)$ will be denoted by $t=(t_i)_{i=0}^{m-1}$, where $t_i\in \kc(d,\xi)$ or simply as $t=(t_i)$. Observe that $\dim {\mathcal T}(d,\xi)=n^2$.

\medbreak

Next we shall describe an algebra structure of ${\mathcal T}(d,\xi)$ in the category ${}^{T_q}_{T_q}\YD$.
\medbreak

The left $T_q$-action on ${\mathcal T}(d,\xi)$ is determined as follows. If $t=(t_i)\in {\mathcal T}(d,\xi)$ then
$(g^{mq+r}\cdot t)_i= h^{\widetilde{q}} t_jh^{-\widetilde{q}}$, for $i=0,\dots,m-1$, $mq+r+i=m \widetilde{q}+j$, where $0\leq r,j \leq m-1$. 

\begin{rmk} This action satisfies
$$ (g^{r}\cdot t)_i= t_{i+r},  \,  \text{ if } 0\leq i+r\leq m-1, \ \ (g^m\cdot t)_i=h t_i h^{-1}. $$
\end{rmk}
\medbreak

For any $a=1,\dots n-1$ we shall inductively define the action of $x^a$ as follows. First, if $t=(t_i)\in {\mathcal T}(d,\xi)$ then
\begin{equation}\label{x-action} (x\cdot t)_i= q^i(w t_i-t_{i+1}  w).
\end{equation}
For $2 \leq a \leq m-1$
\begin{equation}\label{x-action2}(x^a\cdot t)_0= w (x^{a-1}\cdot t)_0- (x^{a-1}\cdot t)_1 w.
\end{equation}

\medbreak
The left $T_q$-coaction on ${\mathcal T}(d,\xi)$ is determined as follows. If $t=(t_i)\in {\mathcal T}(d,\xi)$ then
\begin{equation}\label{T-coaction} t\_{-1}\ot (t\_0)_i=g^{-i} (t_i)\_{-1}g^i \ot (t_i)\_0.
\end{equation}
The multiplication in ${\mathcal T}(d,\xi)$: If $(t_i), (s_i)\in {\mathcal T}(d,\xi)$, then
$ (t_i)\cdot (s_i)= (t_is_i).$

\begin{prop}\label{Simizus-adj.taft} The adjoint algebra for module category ${}_{\kc(d,\xi)}\Mo$ is isomorphic to 
    ${\mathcal T}(d,\xi)$ as algebra objects in ${}^{T_q}_{T_q}\YD$.
\end{prop}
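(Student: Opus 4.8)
The plan is to reduce to a concrete description of Shimizu's adjoint algebra in the Hopf-algebraic setting and then identify the resulting space by hand. First I would specialize the machinery of the previous subsections to the trivial quasitriangular Hopf algebra $T=\ku 1$: then $\vc=\Rep(\ku 1)=\vect_\ku$, so $\End_\vc(\Mo)=\End(\Mo)$ and $\bar\rho$ is exactly Shimizu's $\rho^{\ra}$, and the role of ``$H\#T$'' is played by $T_q$ itself, regarded as an ordinary finite-dimensional Hopf algebra, with $K=\kc(d,\xi)$. Under this specialization, Theorem \ref{relative-coend-hopf}(ii) together with Proposition \ref{yetter}(iii) and \eqref{int-hom-hopf} identifies the adjoint algebra $A_{ {}_{\kc(d,\xi)}\Mo }=\oint_{M\in\Mo}\uhom(M,M)$, as an algebra in ${}^{T_q}_{T_q}\YD\simeq\Zc(\Rep(T_q))$, with the space of left $\kc(d,\xi)$-module maps $\alpha\colon T_q\otk K\to K$ satisfying $\alpha(u\ot k)=\alpha(u\ot 1)\,k$ (here $T_q\otk K$ carries the $K$-action $k\cdot(u\ot l)=k\_{-1}u\ot k\_0 l$ coming from the comodule algebra structure $\lambda$ of $K$), with $T_q$-action $(a\cdot\alpha)(u\ot k)=\alpha(ua\ot k)$, coaction as in (Ad5), and product as in (Ad6); the $T$-comodule condition (Ad2) is vacuous when $T=\ku 1$. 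So it remains to match this space, with all of its structure, against ${\mathcal T}(d,\xi)=\bigoplus_{i=0}^{m-1}\kc(d,\xi)$.

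Next I would cut the data down to size. Each such $\alpha$ is determined by $f_\alpha:=\alpha(-\ot 1)\colon T_q\to\kc(d,\xi)$, and the two defining conditions on $\alpha$ are together equivalent to the single identity
$$f_\alpha(k\_{-1}u)\,k\_0 = k\,f_\alpha(u)\qquad\text{for all }k\in\kc(d,\xi),\ u\in T_q,$$
with $k\_{-1}\ot k\_0=\lambda(k)$ the $T_q$-coaction of $\kc(d,\xi)$; conversely any $f$ with this property defines an admissible $\alpha$ via $\alpha(u\ot k):=f(u)\,k$. Since $\lambda$ is an algebra map, the set of $f$ satisfying this identity is closed under products in the variable $k$, so it suffices to impose it on the algebra generators $h,w$ of $\kc(d,\xi)$; using $\lambda(h)=g^m\ot h$ and $\lambda(w)=x\ot 1+g\ot w$ it becomes
$$f(g^m u)=h\,f(u)\,h^{-1}\qquad\text{and}\qquad f(xu)=w\,f(u)-f(gu)\,w,$$
where $g,x$ denote the generators of $T_q$. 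Iterating the second identity along the PBW basis $\{x^a g^j\}_{0\le a,j\le n-1}$ of $T_q$, and using the first together with $h^d=1$ to reduce $f(g^j)$ for $m\le j<n$, shows that $f$ — hence $\alpha$ — is uniquely determined by the tuple $\big(f_\alpha(g^i)\big)_{i=0}^{m-1}\in\kc(d,\xi)^{\,m}$. Thus $\Psi\colon A_{ {}_{\kc(d,\xi)}\Mo }\to{\mathcal T}(d,\xi)$, $\alpha\mapsto\big(f_\alpha(g^i)\big)_{i=0}^{m-1}$, is a well-defined injective linear map. Since Shimizu's adjoint algebra is a full-center (Lagrangian) algebra, $\dim A_{ {}_{\kc(d,\xi)}\Mo }=\fpd\Rep(T_q)=\dim T_q=n^2=m\cdot\dim\kc(d,\xi)=\dim{\mathcal T}(d,\xi)$, so $\Psi$ is a linear isomorphism. (Alternatively one proves surjectivity by hand: given $(t_i)_i$, set $f(g^{mq+r}):=h^{q}t_r h^{-q}$ for $0\le r<m$ and extend over the PBW basis by the second identity above; the verification that this respects the relations $g^n=1$, $x^n=0$, $gx=qxg$ of $T_q$ — equivalently, the compatibility of $\lambda$ with $w^n=\xi$ and $x^n=0$ — then replaces the dimension input.)

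Finally I would transport the three pieces of structure. Since $T_q$ is generated as an algebra by $g$ and $x$, and the product and coaction can be checked componentwise, it suffices to evaluate everything at the grouplikes $g^i$. For the product, $\Delta(g^i)=g^i\ot g^i$ gives $\Psi(\alpha.\beta)_i=f_\alpha(g^i)f_\beta(g^i)$, i.e. the componentwise product $(t_i)(s_i)=(t_is_i)$. For the coaction, $\Delta^{(2)}(g^i)=g^i\ot g^i\ot g^i$ and $\Sc(g^i)=g^{-i}$ turn (Ad5) into $g^{-i}(t_i)\_{-1}g^i\ot(t_i)\_0$, which is precisely \eqref{T-coaction}. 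For the action of $g^s$ one has $f_{g^s\cdot\alpha}(g^i)=f_\alpha(g^{i+s})$, and the identity $f(g^m u)=hf(u)h^{-1}$ converts this into the shift-and-conjugate formula $(g^{mq+r}\cdot t)_i=h^{\widetilde q}t_j h^{-\widetilde q}$ with $mq+r+i=m\widetilde q+j$; for the action of $x$, the relation $g^i x=q^i x g^i$ in $T_q$ and the identity $f(xu)=wf(u)-f(gu)w$ give $f_{x\cdot\alpha}(g^i)=q^i\big(w t_i-t_{i+1}w\big)$, which is \eqref{x-action} (with the wrap-around convention $t_m=h t_0 h^{-1}$), and \eqref{x-action2} then follows automatically since the action of $x^a$ is obtained by iterating that of $x$. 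Hence $\Psi$ is an isomorphism of algebras in ${}^{T_q}_{T_q}\YD$.

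The hard part will be precisely this last bookkeeping for the $T_q$-action: keeping track of grouplikes moved past $x$ via $gx=qxg$ and of which of the $m$ components (and which power of $h$) one lands in, including the wrap-around at $i=m-1$ governed by $h^d=1$. If one prefers a self-contained argument that avoids invoking the Lagrangian dimension count, the technical heart instead moves to the surjectivity check in the second step — verifying that the recursive extension of a tuple $(t_i)_i$ to all of $T_q$ is consistent with $x^n=0$ and $g^n=1$, which ultimately reflects the compatibility of the comodule algebra relations $w^n=\xi$ and $h^d=1$ of $\kc(d,\xi)$ with the Taft relations via $q$ being a primitive $n$-th root of unity.
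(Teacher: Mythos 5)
Your proof follows essentially the same route as the paper: reduce an element $\alpha$ to the tuple $(\alpha(g^i\otimes 1))_{i=0}^{m-1}$ via the recursion coming from the comodule-algebra generators $h,w$ of $\kc(d,\xi)$, conclude bijectivity by a dimension count, and then transport the product, $T_q$-action and $T_q$-coaction componentwise to match the defining formulas for $\mathcal{T}(d,\xi)$. Your observation that condition (Ad2) is vacuous in the Shimizu case $T=\ku 1$, and your explicit spelling-out of the Yetter--Drinfeld structure verification and of the dimension justification (with the direct-surjectivity alternative), add detail to what the paper leaves implicit, but the underlying argument coincides.
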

\pf Let us denote by $S$ the adjoint algebra of the module category ${}_{\kc(d,\xi)}\Mo$. According to the stated at the beginning of Section \ref{Section:general examples}, $S$ is the vector subspace of $\Hom_{\kc(d,\xi)}(T_q\otk \kc(d,\xi),\kc(d,\xi))$ consisting of elements $\alpha$ that satisfies (Ad1), (Ad2) and (Ad3).

 Let 
$\phi:S \to {\mathcal T}(d,\xi)$
be the map defined as follows. For any $\alpha \in S$, $\alpha: T_q\otk \kc(d,\xi) \to \kc(d,\xi)$ set
$$\phi( \alpha)= (\alpha(g^i\ot 1 ))_{i=0}^{m-1}. $$
Therefore $(\phi( \alpha))_i=\alpha(g^i\ot 1 )$. Let us check that the map $\phi$ is  injective. Let $\alpha \in S(T_q,\kc(d,\xi))$, then
$$ \alpha(k\_{-1}r\ot k\_0 q)=k\alpha(r\ot 1)q,$$
for any $r\in T_q, k,q\in \kc(d,\xi)$. This equation implies that
\begin{equation}\label{on-alpha1} \alpha(g^{ma+i}\ot h)=h^a \alpha(1\ot 1),\ \  \alpha(xg^i\ot 1)= w\alpha(g^i\ot 1)-\alpha(g^i\ot 1) w,\end{equation}
for any $i=0,\dots, m-1$. Reasoning inductively, one can prove that the values of $ \alpha(x^jg^i\ot 1)$ are also determined by values $ \alpha(g^i\ot 1)$. This implies that the function $\alpha$ depend only on the values $\alpha(g^i\ot 1 )$, $i=0,\dots, m-1$. Hence, $\phi$ is injective.
Since both spaces have the same dimension, $\phi$ is an isomorphism. It is not difficult to prove that $\phi$ is a morphism of $T_q$-modules and a morphism of $T_q$-comodules.

\epf 

Now, if $\Mo={}_{\kc(d,\xi)}\Mo$, we want to compute algebra $\ac_{\Rep(\ku C_n),\Mo}$. The algebra ${\mathcal T}(n,\xi)$  has a left $\ku^{C_n}$-action given by
$$f\cdot (t_i)=f(\pi((t_i)\_{-1}))  t_i)\_0.$$
Let us denote $\chi_j:\ku C_n\to \ku$ the linear maps determined by, for any $i,j=0,\dots, n-1$,
$$\chi_j(g^i)=q^{ij}.$$

\begin{prop} There exists an isomorphism $\ac_{\Rep(\ku C_n),\Mo}\simeq \chi_0\cdot \kc(d,\xi). $
\end{prop}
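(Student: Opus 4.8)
The plan is to identify $A_{\Rep(\ku C_n),\Mo}$ as a subalgebra of the Shimizu adjoint algebra ${\mathcal T}(n,\xi)$, using Corollary~\ref{def-relat-lag} together with Corollary~\ref{s^t-iso-end-coro} and the description of $A_{\vc,\Mo}$ as living inside $\Zc^{\Rep(T)}(\Rep(H\# T))\subseteq \Zc(\Rep(H\# T))$. Since $\vc=\Rep(\ku C_n, R)$, the relative center $\Zc^\vc(\Rep(T_q))$ is the M\"uger centralizer of $G(\vc)$ inside $\Zc(\Rep(T_q))={}^{T_q}_{T_q}\YD$, so I would first identify, inside ${\mathcal T}(n,\xi)\in {}^{T_q}_{T_q}\YD$, exactly which elements $t=(t_i)$ satisfy the extra centralizing condition $\psi^A_{G(V)}\sigma^{G(V)}_A=\id$ for all $V\in\Rep(\ku C_n)$. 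Because $\ku C_n$ is generated by the grouplike $g$, it suffices to test this on the one-dimensional modules $\ku_{\chi_j}$, and unwinding the half-braiding \eqref{braid-bar} with the explicit $R$-matrix $R=\frac1n\sum q^{-ij}g^i\ot g^j$ reduces the condition to a constraint on the $T_q$-coaction of $t$, equivalently on the image $\pi(t\_{-1})\in \ku C_n$.

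The key computation is then to show that the centralizing condition forces the $\ku^{C_n}$-degree (under the coaction pushed along $\pi$) to be concentrated in the trivial character $\chi_0$; this is precisely the statement that $A_{\Rep(\ku C_n),\Mo}=\chi_0\cdot {\mathcal T}(n,\xi)$, where $\chi_0\cdot(t_i)=\chi_0(\pi((t_i)\_{-1}))\,(t_i)\_0$ projects onto the $\pi$-trivial-isotypic part. Concretely, using \eqref{T-coaction}, the $\pi$-pushforward of the coaction of $t=(t_i)$ has $i$-th component determined by $\pi\big(g^{-i}(t_i)\_{-1}g^i\big)=\pi((t_i)\_{-1})$, and one checks from the comodule-algebra structure of $\kc(n,\xi)$ — where $\lambda(h)=g^m\ot h$ (here $m=1$ since $d=n$) and $\lambda(w)=x\ot 1+g\ot w$, so $\pi\lambda(h)=1\ot h$ and $\pi\lambda(w)=1\ot w$ after applying $\pi$ — that the relevant $\ku C_n$-grading of a monomial $h^aw^b$ is $g^b$. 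Thus the condition coming from the relative center picks out those $t$ with $b\equiv 0$, i.e. the $\chi_0$-part, matching $\chi_0\cdot\kc(n,\xi)$ (noting that when $d=n$ one has $m=1$ so ${\mathcal T}(n,\xi)=\kc(n,\xi)$ as a vector space).

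The concrete steps, in order: (i) specialize $d=n$, $m=1$ in Proposition~\ref{Simizus-adj.taft}, so ${\mathcal T}(n,\xi)\cong \kc(n,\xi)$ as an object of ${}^{T_q}_{T_q}\YD$ with the structure there; (ii) apply Corollary~\ref{s^t-iso-end-coro}(i) to get $A_{\Rep(\ku C_n),\Mo}\cong S^{\ku C_n}(H,\kc(n,\xi))$ as algebras in $\Zc(\Rep(T_q))$, which by Proposition~\ref{yetter}(ii) lies in $\Zc^{\Rep(\ku C_n)}(\Rep(T_q))$; (iii) translate the defining condition (Ad2) of $S^{\ku C_n}(H,\kc(n,\xi))$, namely $R^2\ot\alpha((1\#R^1)x\ot k)=\pi(\alpha(x\ot k)\_{-1})\ot\alpha(x\ot k)\_0$, through the isomorphism $\phi$ of Proposition~\ref{Simizus-adj.taft} into a condition on $t=(\alpha(g^i\ot 1))$; with the given $R$-matrix, $R^2\ot(1\#R^1)$ acting on $g$-homogeneous elements extracts characters, so (Ad2) becomes the assertion that $\pi(t\_{-1})$ has trivial $\chi_0$-weight, i.e. $t\in\chi_0\cdot\kc(n,\xi)$; (iv) verify that this subspace is closed under the product (Ad6), the $T_q$-action (Ad4) and coaction (Ad5), so that $\phi$ restricts to an algebra isomorphism onto $\chi_0\cdot\kc(n,\xi)$ in ${}^{T_q}_{T_q}\YD$. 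The main obstacle is step (iii): carefully disentangling the two appearances of $R$ (the braiding on $\vc$ uses $\overline R=R_{21}^{-1}$ via $G$, while (Ad2) is phrased with $R$) and checking that the character bookkeeping with $q$-powers comes out to exactly the $\chi_0$-isotypic projection rather than some twist $\chi_j\cdot\kc(n,\xi)$; this is where the explicit value of the $R$-matrix and the relation $hw=q^m wh$ must be used in concert.
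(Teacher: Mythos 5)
Your overall strategy — identifying $A_{\Rep(\ku C_n),\Mo}$ inside Shimizu's adjoint algebra ${\mathcal T}(d,\xi)$ as the subspace cut out by the extra $T$-comodule condition \textup{(Ad2)}, then translating \textup{(Ad2)} through the isomorphism $\phi$ into a constraint involving $\pi\circ\lambda$ on $\kc(d,\xi)$ — is the same route the paper takes, so the architecture of the proof is fine. The problem is a concrete computational error at exactly the point you yourself flag as the main obstacle.

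You write that $\lambda(h)=g^m\ot h$ and $\lambda(w)=x\ot 1+g\ot w$, and then assert ``$\pi\lambda(h)=1\ot h$ and $\pi\lambda(w)=1\ot w$ after applying $\pi$.'' This is false. The projection $\pi:H\#T\to T$ sends $h\#t\mapsto\epsilon(h)t$, so it is the \emph{identity} on the copy of $T=\ku C_n$; in particular $\pi(g^m)=g^m$ and $\pi(g)=g$. Thus $\pi\lambda(h)=g^m\ot h$ and $\pi\lambda(w)=g\ot w$ (only the $x$-term dies). Consequently the $\ku C_n$-degree of a monomial $h^aw^b$ under the $\pi$-pushed coaction is $g^{ma+b}$, not $g^b$ as you claim. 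Since the character bookkeeping is the whole content of step (iii), this error propagates to an incorrect identification of the subspace of ${\mathcal T}(d,\xi)$ cut out by \textup{(Ad2)}; the ``$b\equiv 0$'' description you arrive at is not what the constraint produces. In addition, you silently specialize to $d=n$, $m=1$ throughout (``here $m=1$ since $d=n$''), while the proposition is stated for an arbitrary divisor $d$ of $n$; nothing in the hypothesis lets you assume $d=n$. To repair the argument you would need to (a) keep $m=n/d$ general, (b) use the correct grading $g^{ma+b}$, and (c) redo the weight comparison between the two bases $\{g^j\}$ and $\{e_i=\tfrac1n\sum_j q^{-ij}g^j\}$ that enter \textup{(Ad2)} via the $R$-matrix — which is precisely what the paper's displayed equation \eqref{alph-tf1} encodes at $x=1$, $k=1$.
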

\pf  According to  the stated at the beginning of Section \ref{Section:general examples}, we need to find the subspace of elements in ${\mathcal T}(n,\xi)$ that also satisfy (Ad3). If a linear map $\alpha: T_q\otk \kc(n,\xi)\to \kc(n,\xi)$ satisfies (Ad3) then
\begin{align}\label{alph-tf1} \sum_{i=0}^{n-1} e_i\ot \alpha(g^i\ot 1)=\pi(\alpha(1\ot 1)\_{-1})\ot \alpha(1\ot 1)\_0
\end{align}
where $e_i=\frac{1}{n} \sum_{j=0}^{n-1} q^{-ij}\, g^j.$ Recall that, an element $\alpha: T_q\otk \kc(d,\xi)\to \kc(d,\xi)$ that belongs to $\ac_{\Rep(\ku C_n),\Mo}$ is determined by values $\alpha(g^i\ot 1)$, $i=0,\dots m-1$, then Equation \eqref{alph-tf1} implies the desired result.
\epf

\end{document}